\newtheorem{theorem}{Theorem}[section]
\newtheorem{rlemma}[theorem]{Reduction Lemma}
\newtheorem{lemma}[theorem]{Lemma}
\newtheorem{corollary}[theorem]{Corollary}
\newtheorem{problem}[theorem]{Problem}
\newtheorem{proposition}[theorem]{Proposition}
\DeclareMathSymbol\sh{\mathbin}{Shuffle}{"001}
\numberwithin{equation}{section}
\title{\bf On twin and anti-twin words in the support of the free Lie algebra \\}
\author{\bf Ioannis C. Michos \thanks{E-mail address: imichos@uoi.gr}\\ \\
\small{Department of Mathematics, University of Ioannina}, \\
\small{GR-$451\,10$, Ioannina, Greece}}
\date{}
\begin{document}
\maketitle
%\footnotesize
\begin{abstract}

Let ${\mathcal L}_{K}(A)$ be the free Lie algebra on a finite alphabet $A$ over a commutative ring $K$ with unity. For a word $u$ in the free monoid $A^{*}$ let $\tilde{u}$ denote its reversal.
Two words in $A^{*}$ are called twin (resp. anti-twin) if they appear with equal (resp. opposite) coefficients in each Lie polynomial.
Let $l$ denote the left-normed Lie bracketing and $\lambda$ be its adjoint map with respect to the canonical scalar product on the
free associative algebra $K \langle A \rangle$.
Studying the kernel of $\lambda$ and using several techniques from combinatorics on words and the shuffle algebra
$\displaystyle (K{\langle A \rangle},  + , \sh )$,
we show that when $K$ is of characteristic zero two words $u$ and $v$ of common length $n$ that lie in the support of ${\mathcal L}_{K}(A)$ - i.e.,
they are neither powers $a^{n}$ of letters $a \in A$ with exponent $n > 1$ nor palindromes of even length - are twin (resp. anti-twin)
if and only if $u = v$ or $u = \tilde{v}$ and $n$ is odd (resp. $u = \tilde{v}$ and $n$ is even).

\medskip

{\bf Keywords} Free Lie algebras $\cdot$ Combinatorics on words $\cdot$ Shuffle algebra.
%Pascal triangle $\bmod \, m$; set partitions; $\lambda$-tabloids.
\end{abstract}

\section{Introduction}

Let $A$ be a finite alphabet, $A^{*}$ be the {\em free monoid} on $A$, $K$ be a commutative ring with unity and $K {\langle A \rangle}$ be the
{\em free associative algebra} on $A$ over $K$. The elements of $K {\langle A \rangle}$ are polynomials on non-commuting variables from $A$ and
coefficients from $K$.
Given two polynomials $P, Q \in K {\langle A \rangle}$, their Lie bracket is defined
as $[P,Q] = PQ - QP$. In this way $K {\langle A \rangle}$ is given a Lie structure. It can be proved (see e.g., \cite[Theorem 0.5]{Reut})
that the {\em free Lie algebra} ${\mathcal L}_{K}(A)$ on $A$ over $K$ is equal to the Lie subalgebra of $K {\langle A \rangle}$ generated by $A$.
Its elements will be called {\em Lie polynomials}. When $K$ is the ring $\mathds Z$ of rational integers, ${\mathcal L}_{K}(A)$ is also known as
the {\em free Lie ring}.
The {\em support} of ${\mathcal L}_{K}(A)$ is the subset of ${A}^{*}$ consisting of those
words that appear (with a nonzero coefficient) in some Lie polynomial.
A pair of words $(u,v)$ is called {\em twin} (resp. {\em anti-twin}) if both words
appear with equal (resp. opposite) coefficients in each Lie polynomial over $K$.

Let $m$ be a positive integer with $m > 1$ and ${\mathds Z}_{m}$ be the ring ${\mathds Z}/(m)$ of integers $\bmod \, m$.
M.-P. Sch\"utzenberger had posed the following problems (see \cite[\S 1.6.1]{Reut}; the last two were pointed to us in a private communication
with G. Duchamp):

\begin{problem} \label{probl:1}
Determine the support of the free Lie ring ${\mathcal L}_{\mathds Z}(A)$.
\end{problem}

\begin{problem} \label{probl:2}
Determine the support of ${\mathcal L}_{{\mathds Z}_{m}}(A)$.
\end{problem}

\begin{problem} \label{probl:3}
Determine all the twin and anti-twin pairs of words with respect to ${\mathcal L}_{{\mathds Z}}(A)$.
\end{problem}

\begin{problem} \label{probl:4}
Determine all the twin and anti-twin pairs of words with respect to ${\mathcal L}_{{\mathds Z}_{m}}(A)$.
\end{problem}
%In view of these problems Sch\"utzenberger considered, for each word $w \in {A}^{*}$, the smallest non-negative integer - which we denote by
%$c(w)$ - that appears as a coefficient of $w$ in some Lie polynomial over $\mathds Z$.

G. Duchamp and J.-Y. Thibon gave a complete answer to Problem~\ref{probl:1} in \cite{Duch + Thib} and proved that the complement of the support
of ${\mathcal L}_{{\mathds Z}}(A)$ in $A^{*}$ consists either of powers $a^{n}$ of a letter $a$, with exponent $n > 1$,
or {\em palindromes} (i.e., words $u$ equal to their {\em reversal} $\tilde{u}$) of even length.
This result was extended in \cite{Duch + Laug + Luqu} - under certain conditions - to {\em traces},
i.e., partially commutative words
%(see \cite{Duch + Krob} for an exposition of trace theory)
and the corresponding free partially commutative Lie algebra
(also known as {\em graph Lie algebra}).

\medskip

In this article we continue the work carried out in \cite{Mich} and answer Problem~\ref{probl:3} by the following result.

\begin{theorem} \label{theo:1}
Two words $u$ and $v$ of common length $n$ that lie in the support of the free Lie algebra over a ring $K$ of characteristic zero, 
i.e., which are neither powers $a^{n}$ of letters $a \in A$ with exponent $n > 1$ nor palindromes of even length, are twin (resp. anti-twin) 
if and only if $u = v$ or $u = \tilde{v}$ and $n$ is odd (resp. $u = \tilde{v}$ and $n$ is even).
\end{theorem}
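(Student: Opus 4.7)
The plan is to recast twin/anti-twin as a kernel question for $\lambda$ and then analyse that kernel via the shuffle algebra. Since $l(P)=nP$ for any homogeneous Lie polynomial $P$ of degree $n$ (Dynkin--Specht--Wever, valid in characteristic zero), the image of $l$ extended linearly to $K\langle A\rangle$ is all of $\mathcal{L}_K(A)$; hence $(u,v)$ is twin (resp.\ anti-twin) exactly when $u-v\in\ker\lambda$ (resp.\ $u+v\in\ker\lambda$). By Ree's theorem, $\ker\lambda=\mathcal{L}_K(A)^{\perp}$ coincides in each positive degree with the proper-shuffle subspace $K\langle A\rangle^{+}\sh K\langle A\rangle^{+}$, so the theorem is equivalent to: for $u,v$ in the support of $\mathcal{L}_K(A)$, the combination $u\pm v$ lies in $K\langle A\rangle^{+}\sh K\langle A\rangle^{+}$ if and only if $u=v$ or $u=\tilde v$ (with the parity sign from the statement).

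The easy (if) direction then follows from the classical fact that on $\mathcal{L}_K(A)_n$ the reversal antiautomorphism $P\mapsto\tilde P$ acts as multiplication by $(-1)^{n-1}$: one has $\langle P,\tilde u\rangle=(-1)^{n-1}\langle P,u\rangle$ for every Lie polynomial $P$ of degree $n$, which yields both the odd-length twin and the even-length anti-twin cases. The exclusion of $a^{n}$ for $n>1$ and of even palindromes is forced because such words are not in the support at all, by Duchamp--Thibon.

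For the (only if) direction I would argue contrapositively: assume $u,v$ are in the support with $v\notin\{u,\tilde u\}$, and produce a Lie polynomial $P$ for which $\langle P,u\rangle\neq\pm\langle P,v\rangle$. Natural candidates are of the form $P=l(w)$ for carefully chosen words $w$ built from factors of $u$, whose coefficients on given words can be read off from the explicit signed-permutation expansion of $l(w)$. In parallel, one can expand $\lambda(u\mp v)$ in the Lyndon basis of the shuffle algebra (Radford's theorem realises $(K\langle A\rangle,+,\sh)$ as a polynomial algebra on Lyndon words) and show that a nonzero coefficient along some ``Lyndon-linear'' component is forced whenever $u$ and $v$ are distinct non-reversal support words. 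A Reduction Lemma in the spirit of those developed in \cite{Mich} should allow one to strip off common prefixes and suffixes from $u$ and $v$ and reduce to a small list of base cases.

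The main obstacle is the sheer variety of words to rule out: periodic words, odd palindromes, and words with intricate Lyndon factorisations all threaten hidden linear dependences among support words modulo the shuffle ideal, and the combinatorial bookkeeping needed to exclude every such accidental coincidence is likely to be the most delicate part of the argument. Working through $\lambda$ rather than directly with proper shuffles is precisely what keeps the analysis tractable, since it converts an ``orthogonality to the whole Lie algebra'' condition into a concrete vanishing statement about a single linear map on words, well-suited to induction on $n=|u|=|v|$.
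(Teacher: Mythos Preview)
Your reformulation of twin/anti-twin via $\ker\lambda$ and Ree's theorem is correct and is exactly the setup the paper uses (Lemma~\ref{kerlambda} and Proposition~\ref{Ree}), and your treatment of the ``if'' direction via the action of reversal on $\mathcal{L}_K(A)_n$ is correct as well (this is the paper's Lemma~\ref{l*reversal}). So the framework matches the paper.

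The genuine gap is in the ``only if'' direction: what you have written is a plan, not a proof. You suggest two strategies --- exhibit a separating Lie polynomial $P=l(w)$, or expand $\lambda(u\mp v)$ in the Radford/Lyndon shuffle basis and detect a nonzero linear component --- but neither is carried out, and you yourself flag the ``sheer variety of words'' as the obstacle. The difficulty is real: the coefficients $(l(w),u)$ are signed multiplicities that cancel in subtle ways, and the Radford expansion of a difference of two words has no reason a priori to have a nonzero linear part. Your proposed reduction of ``stripping common prefixes and suffixes'' does not in general respect $\ker\lambda$: the paper's Reduction Theorem is of a different nature (it reduces to a two-letter alphabet, not to shorter words), and the actual descent in length is achieved by comparing specific polynomial coefficients of $\lambda(w)$ extracted via a Pascal-type recursion (Proposition~\ref{lambdapascal}, Corollary~\ref{Pi}) and the numerical invariants $d(w),e(w)$ (Corollary~\ref{ewdw}), combined with the fact that residuals by Lie elements are shuffle derivations (Proposition~\ref{minhpetitot}). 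The paper then runs a lengthy case analysis (\S4--\S10) according to the first and last letters and the shape $a^{k}b\cdots ba^{l}$, repeatedly invoking the induction hypothesis together with the Lyndon--Sch\"utzenberger word equations and the arithmetic of $\gamma(w)$ (Lemmas~\ref{gamma1}, \ref{gammaodd}). None of this machinery is visible in your proposal, and without it the ``only if'' direction remains unproved.
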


This had been already conjectured in \cite{Mich}. There we had related Problems~\ref{probl:1} up to ~\ref{probl:4} with the notion of the adjoint endomorphism $l^{*}$ - denoted by $\lambda$ here, for brevity - of the left-normed (left to right) Lie bracketing $l$ in ${\mathcal L}_{K}(A)$ with respect to the canonical scalar product on $K \langle A \rangle$.
Our starting point was the simple idea that a word $w$ does not lie in the support of ${\mathcal L}_{K}(A)$ if and only if ${\lambda}(w) = 0$ and
a pair $(u, v)$ of words is twin (resp. anti-twin) if and only if ${\lambda}(u) = {\lambda}(v)$ (respectively ${\lambda}(u) = - \, {\lambda}(v)$).
%We had also showed that $c(w)$ is either zero or the greatest common divisor of the coefficients of the
%monomials appearing in ${\lambda}(w)$, for the left-normed Lie bracketing $l$ of the free Lie ring.
In view of these, to prove Theorem~\ref{theo:1} it is enough to prove the following result, which had been stated as a conjecture
\cite[Conjecture 2.9]{Mich}.

\begin{theorem} \label{theo:2}
Let $\lambda$ be the adjoint endomorphism of the left-normed Lie bracketing of the free Lie algebra over a ring $K$ of characteristic zero 
with respect to the canonical scalar product on $K \langle A \rangle$ and let $u, v$ be words of common length $n$ such that both 
${\lambda}(u)$ and ${\lambda}(v)$ are non-zero. Then
\begin{description}
 \item[\em{(i)}]
${\lambda}(u) = {\lambda}(v)$ if and only if $u = v$ or $n$ is odd and $u = \tilde{v}$.
 \item[\em{(ii)}]
${\lambda}(u) = - {\lambda}(v)$ if and only if $n$ is even and $u = \tilde{v}$.
\end{description}
\end{theorem}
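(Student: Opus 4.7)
\medskip

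\noindent\emph{Proof plan.}
The \emph{if} direction rests on the classical reversal identity $\widetilde L = (-1)^{n-1} L$, valid for any Lie polynomial $L$ of degree $n$; it is proved by a short induction on bracket depth using $\widetilde{[P,Q]} = -[\widetilde P, \widetilde Q]$. Since the canonical scalar product on $K\langle A\rangle$ is preserved by reversal, this forces
\[
\langle \tilde u,\, l(w)\rangle \;=\; \langle u,\, \widetilde{l(w)}\rangle \;=\; (-1)^{n-1}\,\langle u,\, l(w)\rangle
\]
for every word $w$ of length $n$, whence $\lambda(\tilde u) = (-1)^{n-1}\,\lambda(u)$. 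Both halves of the ``if'' follow immediately.

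For the \emph{only if} direction I would reformulate $\lambda(u) = \varepsilon \lambda(v)$ as $u - \varepsilon v \in \ker\lambda$ and invoke shuffle duality. In characteristic zero the Dynkin--Specht--Wever theorem makes $l$ surjective onto $\mathcal L_K(A)$ in each homogeneous degree, so $\ker\lambda$ equals the orthogonal complement of $\mathcal L_K(A)$ under the canonical pairing; by Ree's theorem this complement is, in degree $n \geq 2$, precisely the span of proper shuffle products $x \sh y$ with $x,y \in A^+$. The theorem thus reduces to: if $u - \varepsilon v$ lies in the proper shuffle ideal and both $u, v$ belong to the support of $\mathcal L_K(A)$, then $u = v$, or $n$ is odd with $u = \tilde v$ and $\varepsilon = 1$, or $n$ is even with $u = \tilde v$ and $\varepsilon = -1$.

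I would attack this shuffle reformulation by induction on $n$, with the explicit expansion
\[
\lambda(u) \;=\; \sum_{u = u'au''} (-1)^{|u'|}\, a \cdot \bigl(\widetilde{u'} \sh u''\bigr),
\]
obtained by dualising Dynkin's expansion of $l$, as the driving tool. The factorizations with $u' = \epsilon$ and $u'' = \epsilon$ contribute the ``extremal'' monomials $u$ and $(-1)^{n-1}\tilde u$ respectively, while every other factorization contributes a single letter followed by a nontrivial shuffle. After dispatching small base cases directly, the inductive step would begin by comparing the extremal terms on both sides of $\lambda(u) = \varepsilon\lambda(v)$ to force $v \in \{u, \tilde u\}$ modulo the interior, and would then apply a Reduction Lemma in the spirit of \cite{Mich} — obtained by fixing a letter and differentiating $\lambda$ from the left or from the right — to produce a strictly shorter pair $(u', v')$ with $\lambda(u') = \pm\lambda(v')$ to which the inductive hypothesis can be applied.

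The hard part is the \emph{interior} of the above expansion: different factorizations of $u$ and of $v$ generically produce overlapping shuffle contributions, so no naive monomial-by-monomial matching is available. Ruling out accidental coincidences $\lambda(u) = \pm\lambda(v)$ beyond the reversal symmetry will require careful bookkeeping inside the shuffle algebra, and will crucially use the Duchamp--Thibon characterization of the support so that powers $a^n$ with $n>1$ and even-length palindromes are excluded \emph{a priori} at each step of the reduction.
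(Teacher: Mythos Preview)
Your ``if'' direction is correct and matches the paper exactly: it is Lemma~\ref{l*reversal}, proved by the same one-line induction on bracket depth.

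Your framework for the ``only if'' direction is also the paper's: reduce $\lambda(u)=\varepsilon\lambda(v)$ to $u-\varepsilon v\in\ker\lambda$, identify $\ker\lambda$ with the span of proper shuffles via Ree (Proposition~\ref{Ree}), and induct on $n$ using the expansion
\[
\lambda(u)=\sum_{u=u'au''}(-1)^{|u'|}\,a\cdot(\widetilde{u'}\sh u''),
\]
which is exactly Proposition~\ref{reclambdashuffle} at level $r=1$. So far so good.

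The gap is that your proposal stops precisely where the work begins. Two points in particular do not go through as written. First, the ``extremal terms'' heuristic is not strong enough to force $v\in\{u,\tilde u\}$: the monomials $u$ and $(-1)^{n-1}\tilde u$ certainly occur in $\lambda(u)$, but interior summands (those with $u'\ne\epsilon\ne u''$) can and do contribute to the \emph{same} monomials whenever letters repeat, so there is no direct comparison available. Second, the ``Reduction Lemma in the spirit of \cite{Mich}'' you invoke is doing all the heavy lifting, and you do not say what it is. In the paper this single phrase unpacks into roughly seven sections of case analysis (\S4--\S10): one first reduces to a two-letter alphabet, then writes $w=a^{k}bu\,ba^{l}$ and $w'=a^{m}bv\,ba^{n}$, introduces the integer invariants $d(w),e(w)$ of Lemma~\ref{PdPe} and Corollary~\ref{ewdw} to pin down the outer exponents, and then grinds through the residual cases using Proposition~\ref{multieqlambdashuffle} at carefully chosen multi-degrees, the derivation property of residuals by Lie elements (Proposition~\ref{minhpetitot}), the Lyndon--Sch\"utzenberger theorems, and ad hoc divisibility arguments via $\gamma(w)$ (Lemmas~\ref{gamma1} and~\ref{gammaodd}). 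None of this machinery is hinted at in your plan, and there is no known shortcut around it; you yourself flag the interior as ``the hard part'' without offering a mechanism to control it.

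In short: the scaffolding is right, but the building is missing.
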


As we had pointed out in \cite[\S 4]{Mich} Problems~\ref{probl:1} up to \ref{probl:4} may be restated as particular combinatorial questions
on the group ring $K {\mathfrak S}_{n}$ of the symmetric group ${\mathfrak S}_{n}$ on $n$ letters.
The main idea is to view a word $w$ of length $n$ on a fixed sub-alphabet $B = \{ a_{1}, a_{2}, \ldots , a_{r} \}$ of $A$ as an
ordered set partition of $[n] = \{ 1, 2, \ldots , n \}$ denoted by $\{ w \} = (I_{1}(w), I_{2}(w), \ldots , I_{r}(w))$,
where for each $k$ the set $I_{k}(w)$ consists of the positions of $[n]$ in which the letter $a_k$ occurs in $w$.
If $\mu = ({\mu}_{1}, {\mu}_{2}, \ldots , {\mu}_{r})$ is the multi-degree of $w$ then $\{ w \}$ is just a $\mu$-tabloid, where $\mu$ may be,
without loss of generality, assumed an integer partition of $n$.
The role of the reversal $\tilde{w}$ of $w$ is then played by the tabloid ${{\tau}_{n}} \cdot \{ w \}$, where ${\tau}_{n}$ is the involution
$\displaystyle \prod_{i=1}^{k} \, (i, \, n-i+1)$ of ${\mathfrak S}_{n}$ with $k = {\left\lfloor n/2 \right\rfloor}$.
Viewing each permutation as a word in $n$ distinct letters, the left-normed  multi-linear Lie bracketing
$l_{n} = l(x_1 x_2 \cdots x_n)$ and its adjoint ${\lambda}_{n} = {\lambda}(x_1 x_2 \cdots x_n)$
can be viewed as elements of the group ring $K {\mathfrak S}_{n}$; the first one is known as the {\em Dynkin operator}.
The right permutation action of ${\lambda}_{n}$ on words is then equivalent to the left natural action of $l_n$
on tabloids; in particular $w \cdot {\lambda}_{n} \: = \: 0$ if and only if $l_{n} \cdot \{w\} \: = \: 0$.
In this way all results and problems on words translate to the corresponding ones on tabloids.
Theorem~\ref{theo:2}, in particular, has the following equivalent form.

\begin{theorem}\label{dynktabloid}
Let $\mu$ be a partition of $n$, $l_n$ be the Dynkin operator of the free Lie algebra over a ring $K$ of characteristic zero and 
$t_{1}, \, t_{2}$ be $\mu$-tabloids with both $l_{n} \, \cdot \, t_1$ and $l_{n} \, \cdot \, t_2$ different from zero. 
If ${\tau}_{n}$ denotes the involution $\displaystyle \prod_{i=1}^{{\left\lfloor \frac{n}{2} \right\rfloor}} \, (i, \, n-i+1)$ 
of the symmetric group ${\mathfrak S}_{n}$ then
\begin{description}
 \item[\em{(i)}]
$l_{n} \, \cdot \, t_1 \: = \: l_{n} \, \cdot \, t_2$ \, if and only if \, $t_1 = t_2$ or $n$ is odd and \, $t_1 = {{\tau}_{n}} \, \cdot \, t_2$.
 \item[\em{(ii)}]
$l_{n} \, \cdot \, t_1 \: = \: - \,\, l_{n} \, \cdot \, t_2$ \, if and only if $n$ is even and \, $t_1 = {{\tau}_{n}} \, \cdot \, t_2$.
\end{description}
\end{theorem}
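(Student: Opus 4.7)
The proof splits naturally into the ``if'' and ``only if'' directions, via the word-tabloid dictionary set up in the introduction; I argue at the word level, where the hypothesis reads $\lambda(u) = \varepsilon\lambda(v)$ with $\varepsilon \in \{+1, -1\}$ and $\lambda(u), \lambda(v) \ne 0$, and translate the conclusion back at the end.

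For the ``if'' direction, I use the reversal anti-automorphism $\rho$ of $K\langle A\rangle$ defined on words by $\rho(w) = \tilde{w}$ and extended linearly. A short induction on bracket depth shows that $\rho(L) = (-1)^{n-1}L$ for every Lie polynomial $L$ homogeneous of degree $n$: the base case follows from $\rho([a,b]) = ba - ab = -[a,b]$, and if $L = [L_{1}, L_{2}]$ with $L_{i}$ homogeneous of degree $n_{i}$, the anti-homomorphism property gives $\rho(L) = -[\rho(L_{1}), \rho(L_{2})]$, whence the inductive hypothesis produces the sign $(-1)^{1 + (n_{1}-1) + (n_{2}-1)} = (-1)^{n-1}$. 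Combined with the identity $\langle \tilde u, P\rangle = \langle u, \rho(P)\rangle$, valid for all $P \in K\langle A\rangle$ by linearity from the case of a single word, specializing to $P = l(v)$ yields $\langle \lambda(\tilde u), v\rangle = (-1)^{n-1}\langle \lambda(u), v\rangle$ for every $v$, so $\lambda(\tilde u) = (-1)^{n-1}\lambda(u)$. Translated back, this is precisely $l_{n} \cdot (\tau_{n} \cdot t) = (-1)^{n-1}\, l_{n} \cdot t$, settling the ``if'' half of both (i) and (ii).

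For the ``only if'' direction, assume $\lambda(u) = \varepsilon\lambda(v)$ with both sides nonzero. Since $\lambda$ preserves multi-degree, $u$ and $v$ share a common multi-degree $\mu = (\mu_{1}, \ldots, \mu_{r})$, and the Duchamp-Thibon theorem forbids $u, v$ from being either $a^{n}$ or an even-length palindrome. The strategy is to translate the hypothesis into an identity in the shuffle algebra, exploiting that $\ker \lambda = \mathcal{L}_{K}(A)^{\perp}$ coincides with the non-trivial part of the shuffle algebra (Ree's theorem) -- the same duality that drives \cite{Duch + Thib} and the partial results in \cite{Mich}. I would then proceed by induction on the number $r$ of distinct letters in $\mu$: the cases $r \le 2$ are handled by direct combinatorial analysis of $\lambda$ on the finite set of $\mu$-tabloids, building on the machinery of \cite{Mich} where the conjecture was originally stated; for $r \ge 3$ one applies an evaluation morphism $a_{i} \mapsto 1$, which is simultaneously a homomorphism for concatenation and for $\sh$, descending the hypothesis to a smaller alphabet and allowing the inductive conclusion to be promoted to $u$ and $v$ themselves by tracking the positions of the suppressed letter.

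The main obstacle is the two-letter base case. There the kernel of $\lambda$ on a fixed bidegree $(p, q)$ is genuinely non-trivial -- it contains not only the even-length palindromes singled out by Duchamp-Thibon but further shuffle dependencies -- and the crucial point is to show that no ``exotic'' coincidence $\lambda(u) = \pm\lambda(v)$ can occur beyond the reversal relation established in the ``if'' direction. This amounts to isolating, for any pair of distinct $\mu$-words $u \ne v$ with $u \ne \tilde v$ both in the support, a specific Lie polynomial $L$ for which $\langle \lambda(u), L\rangle \ne \varepsilon \langle \lambda(v), L\rangle$; the parity-of-$n$ dichotomy between (i) and (ii) is expected to emerge precisely from how $\rho$ acts on such a test $L$.
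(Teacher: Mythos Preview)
Your ``if'' direction is correct and is exactly Lemma~\ref{l*reversal} of the paper, so there is no issue there.

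The ``only if'' direction, however, is not a proof but an outline, and the outline has two problems. First, your reduction step for $r \ge 3$ is not the one the paper uses and is dubious as stated: sending a letter $a_i \mapsto 1$ (i.e., to the empty word) does \emph{not} obviously intertwine with $\lambda$, and even granting that it is a shuffle-algebra morphism, you would still need to explain how to recover the positions of the deleted letter from the induction hypothesis on the shorter words. The paper instead uses \emph{literal} morphisms $A \twoheadrightarrow \{a,b\}$ (the Reduction Theorem from \cite{Mich}), which do commute with $\lambda$ in the required way.

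Second, and more seriously, you explicitly identify the two-letter case as ``the main obstacle'' and then do not resolve it: you say it ``amounts to isolating \ldots\ a specific Lie polynomial $L$'' separating $u$ from $v$, but give no mechanism for producing such an $L$. This is precisely where the entire content of the paper lies. Sections~4 through~10 carry out a lengthy induction on $|u|$ with a case split on the boundary letters ($aub$ vs.\ $avb$, $aub$ vs.\ $ava$, $aua$ vs.\ $ava$, the last of which itself breaks into four subcases), using the invariants $e(w)$, $d(w)$ of Lemma~\ref{PdPe} and Corollary~\ref{ewdw}, the recursion of Proposition~\ref{lambdapascal}, Proposition~\ref{multieqlambdashuffle} combined with Ree's theorem, the derivation property of residuals (Proposition~\ref{minhpetitot}), and the arithmetic Lemmas~\ref{gamma1} and~\ref{gammaodd} on $\gamma(w)$ to rule out specific divisibility obstructions. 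None of this is hinted at in your proposal, and there is no indication that a shorter route exists; indeed the partial results in \cite{Mich} already used the same framework you describe without closing the gap. As written, your argument for the hard direction is a restatement of the problem rather than a solution.
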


\smallskip

Our main objective is to prove Theorem~\ref{theo:2}.
Studying the way $\lambda$ is affected by literal morphisms from an arbitrary finite alphabet to a two lettered one, we had also shown
\cite[Reduction Theorem 2.10]{Mich} that it suffices to work over an alphabet of two letters, so, without loss of
generality, we may assume that $A = \{ a, b \}$.

The paper is organized as follows.

In \S 2 we set up notation and the necessary background on combinatorics on words and free Lie algebras.
We then recall the main ideas of the work in \cite{Mich}. The most important is the study of $\ker \lambda$ which immediately
connects Problems~\ref{probl:1} up to \ref{probl:4} with the shuffle algebra. More precisely, $\ker \lambda$ is equal to the orthogonal
complement ${{\mathcal L}_{K}(A)}^{\perp}$, which, when $K$ is a commutative ${\mathds Q}$-algebra, is also equal, by a result originally due to
R. Ree \cite{Ree}, to the $K$-span of proper shuffles (shuffles where both words are non empty). 
The extra tool which had not been considered in \cite{Mich} and plays a crucial role here is the fact that the map that sends 
an arbitrary polynomial to its {\em right} or {\em left residual} by a Lie polynomial is a derivation in the shuffle algebra.

In \S 3 we present all the necessary results for calculating ${\lambda}(w)$. This is done recursively in two ways.
One may be viewed as a generalization, in terms of polynomials, of the Pascal triangle and leads to the calculation of two certain 
arithmetic invariants of the polynomial ${\lambda}(w)$, namely the non-negative integers $e(w)$ and $d(w)$.
The other one is in terms of all factors $u$ of $w$ of fixed multi-degree and the shuffle product of words.
The equation ${\lambda}(w) \, = \, \pm \, {\lambda}(w')$, after factoring out a common term of a certain form, finally leads us to a certain 
equation in the shuffle algebra.
Rearranging its terms by collecting all non proper shuffles on the one side and all proper shuffles on the other, we immediately know that
the term consisting of non proper shuffles lies in $\ker \lambda$. This allows us to build some sort of inductive argument for the proof 
of Theorem~\ref{theo:2}. 
If all shuffles are proper we may first have to act on the left or on the right by a suitable Lie polynomial.

The remainder of the paper deals directly with the proof of Theorem~\ref{theo:2} where the above ideas will be put in use.
We argue inductively on the length of the corresponding words. This is done in a case by case analysis.
There are three main cases to study: (1) $w = aub$, $w' = avb$; (2) $w = aub$, $w' = ava$; and (3) $w = aua$, $w' = ava$.
For case (1), which is dealt in \S 4, we need nothing more but the definition of the adjoint map $\lambda$ and the 1st and 2nd Theorem
of Lyndon and Sch\"utzenberger. We deal with case (2) in \S 6. It turns out that Case (3) further breaks up in four particular subcases, which
are presented in \S 5, 7, 8 and 9, respectively. These are glued together in \S 10, which is the final touch of the proof.

\bigskip

%%%%%%%%%%%%%%%%%%%%%%% MAIN IDEAS FOR \lambda %%%%%%%%%%%%%%%%%%%%%%%%%%%%%%%%%%%%%%%%%%%%%%%%%%%%%%%%%%%%%%%%%%%
%%%%%%%%%%%%%%%%%%%%%%%%%%%%%%%%%%%%%%%%%%%%%%%%%%%%%%%%%%%%%%%%%%%%%%%%%%%%%%%%%%%%%%%%%%%%%%%%%%%%%%%%%%%%%%%

\section{Preliminaries}

We start with some preliminaries on combinatorics on words (the standard reference is \cite[\S 1]{Loth}).
Let $A = \{ a_{1}, a_{2}, \ldots , a_{q} \}$ be a finite
alphabet totally ordered by $a_{1} < a_{2} < \cdots < a_{q}$ and $A^{*}$ denote the free monoid on $A$.
An element $w$ of $A^{*}$ is a word $w = x_{1}x_{2} \, \cdots \, x_{n}$, i.e.,
a finite sequence of letters from $A$. Its {\em length}, denoted by $|w|$, is the number $n$ of the letters in it. The {\em empty word},
denoted by $\epsilon$, is the empty sequence with length $|{\epsilon}| = 0$. The set of all non-empty words over $A$ is
denoted by $A^{+}$. If $a \in A$ and $w \in A^{*}$ the number of occurrences of the letter $a$ in $w$ is denoted by
${|w|}_{a}$. The set of distinct letters that occur in a word $w$ is denoted by $alph(w)$. The {\em multi-degree} ${\alpha}(w)$ of $w$
may be defined as the vector ${\alpha}(w) = ({|w|}_{a_{1}}, {|w|}_{a_{2}}, \ldots , {|w|}_{a_{q}})$.
The binary operation in $A^{*}$ is the {\em concatenation product}; if $x, y \in A^{*}$ the product $xy$
is the word which is constructed by concatenating $x$ with $y$.
A {\em factor} of a word $w$ is a word $u$ such that $w = sut$ for some $s, t \in A^{*}$. It is called a
{\em right factor} if $t = \epsilon$ and is called {\em proper} if $u \neq w$. Left factors of
a word $w$ are defined in a similar manner. A word $w \in A^{+}$ is called {\em primitive} if it can not be written in the form $w = u^{n}$ for
some $u \in A^{+}$ and $n > 1$. Two words $x,y$ are {\em conjugate} if there exist words $u, v$ such that $x = uv$ and $y = vu$,
i.e., they are cyclic shifts of one another.
The {\em reversal} $\widetilde{w}$ of a given word $w = x_{1}x_{2} \, \cdots \, x_{n}$ is defined as
$\widetilde{w} = x_{n} \, \cdots \, x_{2}x_{1}$. A word is called {\em palindrome} if $\widetilde{w} = w$.

Our main tools from combinatorics on words will be the following two fundamental results due to Lyndon and Sch\"utzenberger
(see \cite[\S 1.3]{Loth}).

\begin{lemma}[1st Theorem of Lyndon and Sch\"utzenberger] \label{1thL+S}
Let $x, y \in A^{+}$ and $z \in A^{*}$. Then $xz = zy$ if and only if there exist $u, v \in A^{*}$ and an integer
$d \geq 0$ such that $x = uv$, $y = vu$ and $z = u{(vu)}^{d}$.
\end{lemma}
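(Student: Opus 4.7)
The plan is to prove both directions directly, with the forward direction an immediate check and the reverse direction by induction on $|z|$.

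For the ``if'' direction, assuming $x = uv$, $y = vu$ and $z = u(vu)^{d}$, a one-line concatenation computation suffices:
\[ xz \;=\; (uv)\cdot u(vu)^{d} \;=\; u(vu)^{d+1} \;=\; u(vu)^{d}\cdot (vu) \;=\; zy, \]
so this half is essentially trivial.

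For the ``only if'' direction, I would first observe that comparing lengths in $xz = zy$ forces $|x| = |y|$, and then induct on $|z|$. The base case I would treat is $|z| < |x|$: since $xz$ starts with $x$ while $zy$ starts with $z$, and $|z| < |x|$, the word $z$ is a prefix of $x$, say $x = zx'$. Symmetrically, since $xz$ ends with $z$, $zy$ ends with $y$, and $|z| < |y|$, the word $z$ is a suffix of $y$, say $y = y'z$. Substituting into $xz = zy$ yields $zx'z = zy'z$, so cancellation gives $x' = y'$. Then setting $u := z$, $v := x'$ and $d := 0$ produces $x = uv$, $y = vu$, and $z = u = u(vu)^{0}$; the degenerate subcase $z = \epsilon$ (forcing $x = y$) is captured here with $u = \epsilon$, $v = x$, using that the statement permits $u, v$ to be empty.

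For the inductive step, assume $|z| \geq |x|$. Then $x$ is a prefix of $z$, so $z = xz'$ with $|z'| = |z| - |x| < |z|$ (since $x \in A^{+}$). Substituting into $xz = zy$ gives $x\cdot xz' = xz'\cdot y$, i.e., $xz' = z'y$. The induction hypothesis, applied to the triple $(x, y, z')$, yields $u, v \in A^{*}$ and $d' \geq 0$ with $x = uv$, $y = vu$, and $z' = u(vu)^{d'}$. Then $z = xz' = (uv)\cdot u(vu)^{d'} = u(vu)^{d'+1}$, completing the induction with $d := d' + 1$. There is no genuine obstacle here: the argument is combinatorial bookkeeping on prefixes and suffixes, and the only item demanding care is verifying that the boundary case $z = \epsilon$ is subsumed by the base case under the convention that $u$ or $v$ may be empty.
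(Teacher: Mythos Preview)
Your proof is correct. The paper does not actually supply its own proof of this lemma: it is stated as a known result with a citation to Lothaire's \emph{Combinatorics on Words}, \S 1.3. Your argument is the standard one found there --- induction on $|z|$, splitting into the cases $|z| < |x|$ and $|z| \geq |x|$ --- so there is nothing to compare.
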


\begin{lemma}[2nd Theorem of Lyndon and Sch\"utzenberger] \label{2thL+S}
Let $x, y \in A^{+}$. Then $xy = yx$ if and only if there exist $z \in A^{+}$ and integers $m, n > 0$ such that
$x = z^{m}$ and $y = z^{n}$.
\end{lemma}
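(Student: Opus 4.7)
The ``if'' direction is immediate: when $x = z^{m}$ and $y = z^{n}$ both $xy$ and $yx$ equal $z^{m+n}$, so the whole interest lies in the converse. My plan is strong induction on $|x| + |y|$, exploiting the fact that a simple prefix comparison in the equation $xy = yx$ already forces decisive structural information about $x$ and $y$.

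Without loss of generality assume $|x| \geq |y|$. When $|x| = |y|$, reading $xy = yx$ from the left shows $x = y$, and the conclusion holds with $z = x$ and $m = n = 1$. Otherwise $|x| > |y|$, and comparing the first $|y|$ letters of the two sides of $xy = yx$ reveals that $y$ is a prefix of $x$; write $x = yw$. Since $|w| = |x| - |y| \geq 1$ we have $w \in A^{+}$. Substituting $x = yw$ back into $xy = yx$ and left-cancelling $y$ produces the shorter identity $wy = yw$ with $|w| + |y| < |x| + |y|$. The inductive hypothesis supplies $z \in A^{+}$ and positive integers $m'$ and $n$ with $w = z^{m'}$ and $y = z^{n}$, so that $x = yw = z^{n + m'}$, and taking $m = n + m'$ closes the induction.

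The argument is essentially algorithmic and has no serious obstacle: the only care points are verifying that $w$ is nonempty at each recursive step (which hinges on $x, y \in A^{+}$ together with $|x| > |y|$) and anchoring the induction at $|x| + |y| = 2$. A second route would be to apply Lemma~\ref{1thL+S} to $xy = yx$ by reading it as the equation $xz = zy$ with $z := y$; this yields $x = uv = vu$ and $y = u(vu)^{d}$, after which one finishes by applying the induction hypothesis to the equation $uv = vu$. Since that recovers the same Euclidean-style descent in a slightly more roundabout way, I would favor the direct prefix-cancellation argument above.
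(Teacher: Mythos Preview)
Your proof is correct and is the standard Euclidean-style induction on $|x|+|y|$; the paper itself does not prove this lemma at all but simply cites it as a classical result from \cite[\S 1.3]{Loth}, so there is no ``paper's own proof'' to compare against. Your argument is exactly the one found in that reference.
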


The {\em lexicographic order} is a total order on $A^{+}$, also denoted by $<$, which is defined in the following way:
For $u, v \in A^{+}$ we say that $u < v$ if either $v = uw$ for some $w \in A^{+}$
or if there exist $x,y,z \in A^{*}$ and $a, b \in A$ with $a < b$ such that $u = xay$ and $v = xbz$.

A word $w \in A^{+}$ is called {\em Lyndon} (see \cite[\S 5.1]{Loth}) if it is strictly smaller than any of its proper right factors
with respect to the lexicographic order.
We denote the set of all Lyndon words on $A$ by ${\mathcal Lynd} = {\mathcal Lynd}(A)$.
The {\em standard factorization} of a word $w \in {\mathcal Lynd} \setminus A$ is the unique pair $(l, m)$ such
that $w = lm$, where $l,m \in {\mathcal Lynd}$, $l < m$ and $m$ is of maximal length.
%Using the standard factorization
%of Lyndon words one arrives at the unique decomposition of a given word $w \in A^{+}$ as a product of powers of Lyndon
%words in descending order, i.e.,
%\begin{equation}
%w \: = \: l_{1}^{i_{1}} l_{2}^{i_{2}} \cdots l_{k}^{i_{k}},
%\end{equation}
%where $l_{1} \succ l_{2} \succ \cdots \succ l_{k}$ and $i_{1}, i_{2}, \ldots , i_{k} \in {\mathds N}$
%(see \cite[p. 67]{Loth}).

\medskip

Each polynomial $P \in K {\langle A \rangle}$ is written in the form
$P \, = \, \sum_{w \in A^{*}} \, (P, w) \, w$, where $(P, w)$ denotes the coefficient of the word $w$ in $P$.
Its {\em support} $supp(P)$ is the set of all $w \in A^{*}$ with $(P, w) \neq 0$.
$P$ is called {\em homogeneous of degree} $n$ (resp. {\em multi-homogeneous of multi-degree}
$({\alpha}_{1}, {\alpha}_{2}, \ldots , {\alpha}_{q})$) if each $w \in supp(P)$ is of length $n$
(resp. of multi-degree $({\alpha}_{1}, {\alpha}_{2}, \ldots , {\alpha}_{q})$).
The palindrome $\widetilde{P}$ of $P$ is defined as $\widetilde{P} \, = \, \sum_{w \in A^{*}} \, (P, w) \, \tilde{w}$.

The {\em support} of ${\mathcal L}_{K}(A)$ is the subset ${\mathcal S}_{K}(A)$ of ${A}^{*}$ defined as
\begin{equation}
{\mathcal S}_{K}(A) \: = \: \{ w \in A^{*} \; : \; \exists P \, \in {\mathcal L}_{K}(A) \; \mbox{with} \; (P, w) \neq 0 \}.
\end{equation}
Its complement in $A^{*}$, i.e., the set of all words which do not appear in Lie polynomials over $K$, will be
denoted by ${\mathcal V}_{K}(A)$.
Two words $u$ and $v$ are called {\em twin} (respectively {\em anti-twin}) if $(P, u) = (P, v)$
(resp. $(P, u) = -(P, v)$) for each $P \in {\mathcal L}_{K}(A)$.

The set $K {\langle A \rangle}$ becomes a non-commutative associative algebra with unit equal to ${\mathbf 1} \; = \; 1 \, \epsilon$ under the
usual {\em concatenation product} defined as
\begin{equation}
(PQ, \, w) \: = \: \sum_{w = uv} (P,u)(Q,v).
\end{equation}
Given two polynomials $P, Q \in K {\langle A \rangle}$ there is a {\em canonical scalar product} defined as
\begin{equation}
(P,Q) \: = \: \sum_{w \in {A}^{\, *}} (P,w)(Q,w), \label{scalarproduct}
\end{equation}
in the sense that it is the unique scalar product on $K {\langle A \rangle}$ for which ${A}^{*}$
is an orthonormal basis.

The set $K {\langle A \rangle}$ becomes also a commutative associative algebra with unit ${\mathbf 1}$ under the
{\em shuffle product} that is initially defined for words as
${\epsilon} \, \sh \: w  \: = \: w  \, \sh \, {\epsilon} \: = \: w$, if at least one of them is the empty word,
and recursively as
\begin{equation}
(au') \, \sh \, (bv') \: = \: a(u' \, \sh \, (bv')) \, + \, b((au') \, \sh \, v'), \label{recdefshuffle}
\end{equation}
if $u = au'$ and $v = bv'$ with $a, b \in A$ and $u', v' \in A^{*}$ (see \cite[(1.4.2)]{Reut}).
It is then extended linearly to the whole of $K {\langle A \rangle}$ by the formula
\begin{equation}
P \; \sh \; Q \: = \: \sum_{u, v \in A^{*}} (P,u)(Q,v) \; u \, \sh \, v.
\end{equation}
It is easily checked, initially for words, that
\begin{equation} \label{Preversalshuffle}
\widetilde{PQ} \, = \, \widetilde{Q} \widetilde{P} \quad \text{and} \quad \widetilde{P \sh Q} \, = \, \widetilde{P} \sh \widetilde{Q}.
\end{equation}

The shuffle product $u \, \sh \, v$ of two words $u$ and $v$ is called {\em proper} if $u, v \in A^{+}$.

\medskip

The {\em right and the left residual} of a polynomial $P$ by another polynomial $Q$, denoted respectively by
$P \, \rhd \, Q$ and $Q \, \lhd \, P$, are the polynomials defined by the formulae
\begin{equation}
(P \, \rhd \, Q, \, w) \; = \; (P, \, Qw) \quad \text{and} \quad (Q \, \lhd \, P, \, w) \; = \; (P, \, wQ),
\end{equation}
for each $w \in A^{*}$.
The term right and left residual comes from the easily checked identities $uv \, \rhd \, u \; = \; v$ and
$u \, \lhd \, vu \; = \; v$, for each $u,v \in A^{*}$. For a fixed $Q \in K{\langle A \rangle}$ the maps
$P \mapsto P \, \rhd \, Q$ and $P \mapsto Q \, \lhd \, P$ are easily checked to be $K$-linear.
Furthermore one can also check that $P \, \rhd \, (QR) \; = \; (P \, \rhd \, Q) \, \rhd \, R$ and
$P \, \rhd \, {\mathbf 1} \; = \; P$. This shows that the right residual $\rhd$ is a right action of
the algebra $K {\langle A \rangle}$ on itself. Similarly one can also verify that
$(RQ) \, \lhd \, P \; = \; R \, \lhd \, (Q \, \lhd \, P)$ and ${\mathbf 1}  \, \lhd \, P \; = \; P$, so that
the left residual $\lhd$ is a left action of the algebra $K {\langle A \rangle}$ on itself.
The following result, which we first saw in an explicit form in \cite{Minh + Peti}, is crucial to our work.

\begin{proposition} {\rm (\cite[Lemma 2.1]{Minh + Peti})}
\label{minhpetitot}
The right and the left residual by a Lie polynomial are derivations of the shuffle algebra, i.e., if $P$ and $Q$ are
polynomials in $K {\langle A \rangle}$ and $R$ is a Lie polynomial then we have
\begin{eqnarray*}
 (P \, \sh \, Q) \, \rhd \, R \: = \: (P \, \rhd \, R) \; \sh \; Q \: + \: P \, \sh \; (Q \, \rhd \, R); \\
 R \, \lhd \, (P \, \sh \, Q) \: = \: (R \, \lhd \, P) \; \sh \; Q \: + \: P \, \sh \; (R \, \lhd \, Q).
\end{eqnarray*}
\end{proposition}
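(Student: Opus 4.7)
The plan is to prove the right residual Leibniz rule by induction on the bracket depth of $R$ as a Lie polynomial, and to deduce the left residual identity from it via reversal. Since both sides of each identity are $K$-trilinear in $(P,Q,R)$, it suffices to take $P = u$, $Q = v$ words in $A^{*}$ and let $R$ range over a spanning set of ${\mathcal L}_{K}(A)$ consisting of iterated commutators of letters.

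The base case is $R = a \in A$. Writing $u = xu'$ and $v = yv'$ (with the degenerate cases $u = \epsilon$ or $v = \epsilon$ handled separately), I expand $u \sh v = x(u' \sh v) + y(u \sh v')$ via the recursion~\eqref{recdefshuffle}, observe that $(zw') \rhd a$ equals $w'$ when $z = a$ and vanishes otherwise, and check that the four resulting sub-cases (on whether $x = a$ and whether $y = a$) exactly match $(u \rhd a) \sh v + u \sh (v \rhd a)$.

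For the inductive step, write $R = [R_{1}, R_{2}] = R_{1}R_{2} - R_{2}R_{1}$ with $R_{1}, R_{2}$ Lie polynomials of strictly smaller bracket depth. Since $\rhd$ is a right action, $(u \sh v) \rhd R = ((u \sh v) \rhd R_{1}) \rhd R_{2} - ((u \sh v) \rhd R_{2}) \rhd R_{1}$. Applying the inductive hypothesis first for $R_{1}$ and then for $R_{2}$ (and symmetrically in the second term) expands each contribution into four summands: two \emph{matching} ones of shape $(u \rhd R_{i} \rhd R_{j}) \sh v$ or $u \sh (v \rhd R_{i} \rhd R_{j})$, and two \emph{mixed} ones of shape $(u \rhd R_{i}) \sh (v \rhd R_{j})$ with $i \ne j$. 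The mixed summands are identical in the two expansions and hence cancel in the difference, while the matching ones reassemble into $(u \rhd [R_{1}, R_{2}]) \sh v + u \sh (v \rhd [R_{1}, R_{2}]) = (u \rhd R) \sh v + u \sh (v \rhd R)$.

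For the left residual identity, I would first verify, using $(Q \lhd P, w) = (P, wQ)$ together with the identities $\widetilde{PQ} = \tilde{Q}\tilde{P}$ and $\widetilde{P \sh Q} = \tilde{P} \sh \tilde{Q}$ of~\eqref{Preversalshuffle}, that $\widetilde{Q \lhd P} = \tilde{P} \rhd \tilde{Q}$, so that the left residual is conjugate to the right residual by reversal. Since reversal sends $[X,Y]$ to $-[\tilde{X}, \tilde{Y}]$ and hence stabilises ${\mathcal L}_{K}(A)$, applying reversal to the right residual Leibniz rule yields the left one. The main obstacle is exactly the cancellation of the mixed summands in the inductive step: this crucially uses that $R$ is a \emph{commutator}, as the Leibniz rule already fails for $R = a^{2}$ (for instance with $u = v = ab$), confirming that one cannot drop the Lie hypothesis in the statement.
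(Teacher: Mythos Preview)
Your argument is correct and follows essentially the same strategy as the paper: induct from the base case $R\in A$ (which is exactly the recursion~\eqref{recdefshuffle}) and use that $\rhd$ is a right action to handle commutators. Two minor differences are worth noting. First, the paper writes the inductive step as $R=[R',a]$ with $a\in A$, relying on the fact that every homogeneous Lie element of degree $n+1$ is a combination of such brackets; your more symmetric decomposition $R=[R_1,R_2]$ works just as well but requires the explicit cancellation of the mixed terms, which you carry out correctly. Second, for the left residual the paper simply repeats the same induction using that $\lhd$ is a left action, whereas you derive it from the right residual via the reversal identity $\widetilde{Q\lhd P}=\tilde P\rhd\tilde Q$; this is a perfectly valid and slightly slicker alternative.
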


\begin{proof} It suffices to show these identities when $R$ is a homogeneous Lie polynomial of degree $n$. The case
$n = 1$ follows directly from \eqref{recdefshuffle} (cf. \cite[p. 26]{Reut}).
For the induction step we set $R = [R', \, a]$, where $a \in A$ and $R'$ is a homogeneous Lie polynomial of degree $n$.
Then the result follows since the right and the left residual are respectively a right and a left action of
$K {\langle A \rangle}$ on itself.
\end{proof}

We will also need the following two technical results.

\begin{lemma} \label{abshuffle}
Let $m, n \in {\mathds N}$. Then
\[ a^{m} \: \sh \: ba^{n} \: = \: \sum_{i=0}^{m} \binom{n+i}{i} \, a^{m-i}ba^{n+i}. \]
\end{lemma}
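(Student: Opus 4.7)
The plan is to prove the identity by induction on $m$, using the recursive definition \eqref{recdefshuffle} of the shuffle product. As a preliminary step I would record the simpler identity
$$ a^{m} \,\sh\, a^{n} \;=\; \binom{m+n}{m}\, a^{m+n}, $$
which follows immediately either by induction on $m+n$ via \eqref{recdefshuffle} or by observing that every interleaving of $a^{m}$ with $a^{n}$ spells the same word and there are $\binom{m+n}{m}$ such interleavings.

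For the main claim, the base case $m=0$ reduces to $\epsilon \,\sh\, ba^{n} = ba^{n}$, which agrees with the right-hand side since only the $i=0$ term survives, with coefficient $\binom{n}{0}=1$. For the inductive step, I would apply \eqref{recdefshuffle} with $u'=a^{m-1}$ and $v'=a^{n}$ to get
$$ a^{m} \,\sh\, ba^{n} \;=\; a\bigl(a^{m-1}\,\sh\, ba^{n}\bigr) \;+\; b\bigl(a^{m}\,\sh\, a^{n}\bigr). $$
The induction hypothesis rewrites the first summand as $\sum_{i=0}^{m-1}\binom{n+i}{i}\,a^{m-i}ba^{n+i}$, while the preliminary identity turns the second summand into $\binom{n+m}{m}\,ba^{m+n}$, which is precisely the $i=m$ term of the target sum (using $\binom{n+m}{m}=\binom{n+m}{n+m-n}$ to match the stated form $\binom{n+i}{i}$ at $i=m$). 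Adding the two pieces yields the desired expression.

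I do not expect any genuine obstacle: the argument is essentially a bookkeeping exercise, and the only care needed is to correctly absorb the freshly produced $b$-term as the missing $i=m$ index of the sum. A more conceptual alternative, which I would mention only in passing, is the direct combinatorial count: any shuffle of $a^{m}$ with $ba^{n}$ necessarily produces a word of the form $a^{m-i}ba^{n+i}$ with $0\le i\le m$, where $i$ is the number of letters from $a^{m}$ that are inserted to the right of the $b$; the multiplicity with which this word arises equals the number of ways to position those $i$ letters among the $n+i$ trailing $a$'s, which is exactly $\binom{n+i}{i}$.
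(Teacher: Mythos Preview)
Your argument is correct: the base case and the inductive step both check out, and the bookkeeping is handled cleanly. The approach is essentially the same as the paper's, which also proceeds by an easy induction via the recursion \eqref{recdefshuffle}; the only cosmetic difference is that the paper inducts on $n$ whereas you induct on $m$.
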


\begin{proof}
By an easy induction on $n$.
\end{proof}

\smallskip

\begin{lemma} \label{PdPe}
Let $K$ be a commutative ring with unity and $P$ be a non zero polynomial of multi-degree $(n,m)$ in $K \langle a , \, b \rangle$,
where $n, m \geq 1$. There exist unique non-negative integers $d = d(P)$ and $e = e(P)$ with $d \leq e$ and unique polynomials
$P_{d}, \, \ldots \, , P_{e}$, with $P_{d} \neq 0$ and $P_{e} \neq 0$, such that $P$ is written in the form
\[ P \: = \: P_{d} \, ba^{d} \: + \: \cdots \: + \: P_{e} \, ba^{e} \, . \]
\end{lemma}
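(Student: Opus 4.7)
The plan is to use the fact that any word $w$ containing at least one occurrence of the letter $b$ admits a unique decomposition $w = u\,b\,a^{i}$, where $a^{i}$ is the maximal suffix of $w$ consisting of the letter $a$ (so $i\geq 0$ is uniquely determined by $w$, and then $u$ is also uniquely determined as everything preceding the corresponding occurrence of $b$). Since $P$ has multi-degree $(n,m)$ with $m\geq 1$, every word in $\operatorname{supp}(P)$ contains at least one $b$, so this decomposition applies to every word appearing in $P$.

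Concretely, for each $w\in\operatorname{supp}(P)$ write $w = u_{w}\,b\,a^{i_{w}}$ as above, and for each integer $i\geq 0$ set
\[
P_{i} \;=\; \sum_{\substack{w\in\operatorname{supp}(P)\\ i_{w}=i}} (P,w)\,u_{w}\,.
\]
This is a finite $K$-linear combination of words (possibly zero for some $i$), and by regrouping the defining expansion of $P$ according to the value of $i_{w}$ one obtains $P = \sum_{i\geq 0} P_{i}\,b\,a^{i}$. Because $P\neq 0$, the set $\{i\geq 0 : P_{i}\neq 0\}$ is non-empty, and it is bounded above by $n$ since $|w|_{a}=n$. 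Take $d$ and $e$ to be its minimum and maximum respectively; then $P = P_{d}\,b\,a^{d} + \cdots + P_{e}\,b\,a^{e}$ with $P_{d},P_{e}\neq 0$, as required.

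For uniqueness, suppose $P = \sum_{i=d}^{e} P_{i}\,b\,a^{i} = \sum_{i=d'}^{e'} Q_{i}\,b\,a^{i}$ are two presentations of the stated form. The key observation is that for any non-zero polynomial $Q_{i}$ and any $v\in\operatorname{supp}(Q_{i})$, the word $v\,b\,a^{i}$ has maximal $a$-suffix of length exactly $i$, since the letter immediately to the left of this suffix is $b$. Consequently, the supports of $P_{i}\,b\,a^{i}$ for distinct values of $i$ are pairwise disjoint, and the same holds for the $Q_{i}\,b\,a^{i}$. Comparing the two sums on words whose maximal $a$-suffix has a fixed length $i$ forces $P_{i}\,b\,a^{i} = Q_{i}\,b\,a^{i}$; dividing off the common right factor $b\,a^{i}$ (which is harmless at the level of coefficients, by the uniqueness of the decomposition $w = u\,b\,a^{i}$) yields $P_{i}=Q_{i}$ for every $i$, so in particular $d=d'$, $e=e'$. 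No serious obstacle arises; the entire argument reduces to the elementary observation that the rightmost occurrence of $b$ in a word is a well-defined, unique position.
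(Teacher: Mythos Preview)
Your proof is correct and follows essentially the same approach as the paper: both decompose each word in the support according to its maximal right $a$-suffix, group terms by that exponent, and take $d$ and $e$ to be the extreme exponents that occur. The paper phrases the extraction of $u_w$ via the left residual $ba^{i}\lhd w$, whereas you name it directly; your version is slightly more complete in that you spell out the uniqueness argument, which the paper leaves implicit.
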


\begin{proof}
Let $\displaystyle 0 \neq P = \sum_{u \in A^{+}} k_{u} u \, \in \, K \langle a, b \rangle$. For each $u \in supp(P)$ there exists a unique
non negative integer $i$ such that $u = vba^{i}$. Using the notion of the left residual we get $v = ba^{i} \lhd u$.
Consider the finite set $I = \{ i \geq 0 \, : \, \exists \, u \in supp(P) \, \mbox{with} \, u = vba^{i} \}$.
For a fixed $i \in I$ define $X_{i} = \{ u \in supp(P) \, : \, \exists \, v \in \{a, b \}^{*} \, \mbox{with} \, u = vba^{i} \}$.
Clearly $supp(P)$ is equal to the disjoint union $\bigcup_{i \in I} X_{i}$. Then we obtain
\[ P = \sum_{u \in supp(P)} k_{u} u = \sum_{i \in I} \bigl( \sum_{u \in X_{i}} k_{u} (ba^{i} \lhd u) \bigr) ba^{i}. \]
Our result follows if we choose $\displaystyle P_{i} = \sum_{u \in X_{i}} k_{u} (ba^{i} \lhd u)$, $d = \min{I}$ and $e = \max{I}$.
\end{proof}

If we interchange the role of the letters $a$ and $b$, the polynomial $P$ can also be written uniquely in an analogous form. To distinguish 
the invariants $d(P)$ and $e(P)$ in this case from the default one of Lemma~\ref{PdPe} we note them as $d_{b}(P)$ and $e_{b}(P)$.  
Now suppose that a word $w$ lies in the support of the free Lie algebra, i.e., ${\lambda}(w) \neq 0$. In view of Lemma~\ref{PdPe} we define
$d(w) = d({\lambda}(w))$ and $e(w) = e({\lambda}(w))$.

\bigskip

The {\em left-normed Lie bracketing} of a word is the Lie polynomial defined recursively as
\begin{equation}
{\ell}({\epsilon}) \, = \, 0 \, , \qquad {\ell}(a) \, = \, a \, , \qquad \mbox{and} \qquad
{\ell}(ua) \, = \,  [{\ell}(u) \, , a] \, ,
\end{equation}
for each $a \in A$ and $u \in {A}^{+}$.
One can extend $\ell$ linearly to $K {\langle A \rangle}$ and construct a linear map, denoted also by
$\ell$, which maps
$K {\langle A \rangle}$ onto the free Lie algebra ${\mathcal L}_{K}(A)$, since the set
$\{ {\ell}(u) \: : \: u \in A^{*} \}$ is a well known $K$-linear generating set of ${\mathcal L}_{K}(A)$
(see e.g. \cite[\S 0.4.1]{Reut}).

With any word $w \in {\mathcal Lynd}$, we associate its {\em bracketed form}, denoted by $[w]$, which is a Lie polynomial
defined recursively as follows:
\begin{equation}
[w]  = \begin{cases}
   a,  &\text{if \, $w = a, \quad a \in A$;} \\
    [[l], \, [m]],  &\text{if \, $(l, m)$ is the standard factorization of $w \in {\mathcal Lynd} \setminus A$ .}
               \end{cases}
\end{equation}
It is well known (e.g., see \cite[\S 5.3]{Loth}) that the set $\{ [l] \; : \; l \in {\mathcal Lynd} \}$ is a $K$-basis
of ${\mathcal L}_{K}(A)$ with the triangular property
\begin{equation} \label{1Lynd}
[l] \: = \: l \: + \: \sum_{w \in A^{*}} a_{w} \, w,
\end{equation}
where $a_{w} \in K$, $|w| = |l|$ and $w > l$.
%It follows that if $l, l' \in {\mathcal Lynd}$ and $|l| = |l'|$ then
%\begin{equation}
%l \; \rhd [l'] \: = \: [l'] \; \lhd l \: = \: \begin{cases}
%  1, &\text{if \, $l = l'$;} \\
%  0, &\text{if \, $l \prec l'$.}
%   \end{cases}
%\end{equation}

The adjoint endomorphism ${\ell}^*$ of the left-normed Lie bracketing $\ell$,
which will be denoted by $\lambda$ for brevity, is then defined by the relation
\begin{equation}
({{\lambda}}(u), v) \, = \, ({\ell}(v), u), \label{defl*}
\end{equation}
for any words $u$, $v$. The image of $\lambda$ on a word of ${A}^{\, *}$ can also be effectively defined recursively
by the relations
\begin{equation}
{{\lambda}}({\epsilon}) \, = \, 0 \, , \qquad {{\lambda}}(a) \, = \, a \, , \qquad \mbox{and} \qquad
{{\lambda}}(aub) \: = \: {{\lambda}}(au)\,b - {{\lambda}}(ub)\,a \, , \label{defrecl*}
\end{equation}
where $a, b \in A$ and $u \in {A}^*$ (cf. \cite[Problem 5.3.2]{Loth}). The proof goes by induction on the
length of the given word, just as in the case of the adjoint endomorphism of the right-normed Lie
bracketing (discussed in detail in \cite[pp. 32 - 33]{Reut}).

One can also extend $\lambda$ linearly to the whole of $K {\langle A \rangle}$ and construct a linear
endomorphism of $K {\langle A \rangle}$, denoted also by $\lambda$.

\begin{proposition} {\rm (\cite[Proposition 2.5]{Mich})} \label{lambdaLyndon}
Let $K$ be a commutative ring with unity and $l_{1}, l_{2}, \ldots , l_{r}$ be all the Lyndon words of length $n$ on the alphabet $A$.
Then the set $\{ {\lambda}(l_{1}), {\lambda}(l_{2}), \ldots , {\lambda}(l_{r}) \}$ is a $K$-basis of the image under $\lambda$ of the
$n$-th homogeneous component of $K {\langle A \rangle}$.
\end{proposition}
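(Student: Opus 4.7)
My plan is to distill Proposition~\ref{lambdaLyndon} into the single observation that the Gram-type matrix pairing the Lyndon words of length $n$ against their bracketed forms is unipotent upper triangular, hence invertible over an arbitrary commutative ring $K$ with unity.

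Writing $W_{n}$ for the $n$-th homogeneous component $K\langle A\rangle_{n}$ and $L_{n}$ for ${\mathcal L}_{K}(A) \cap W_{n}$, for each $v \in A^{n}$ I would introduce the unique scalars $\beta_{j}(v) \in K$ defined by the expansion $\ell(v) = \sum_{j=1}^{r} \beta_{j}(v)\, [l_{j}]$ in the Lyndon basis of $L_{n}$, and set $T_{j} := \sum_{v \in A^{n}} \beta_{j}(v)\, v \in W_{n}$. Unwinding the adjointness identity~\eqref{defl*} against this expansion yields the key formula
\[
\lambda(w) \: = \: \sum_{j=1}^{r} \, ([l_{j}], \, w)\, T_{j} \qquad (w \in A^{n}),
\]
so that $\lambda(W_{n})$ is automatically contained in the $K$-span of $T_{1}, \ldots, T_{r}$, and the spanning half of the proposition reduces to showing that each $T_{j}$ itself is a $K$-linear combination of $\lambda(l_{1}), \ldots, \lambda(l_{r})$.

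Specialising $w = l_{i}$ in the key formula gives $\lambda(l_{i}) = \sum_{j=1}^{r} M_{ji}\, T_{j}$ with $M_{ji} := ([l_{j}], l_{i})$, and the triangular property~\eqref{1Lynd} of the Lyndon bracketed form forces $M_{ji} = 0$ whenever $l_{i} < l_{j}$ and $M_{jj} = 1$; listing the $l_{i}$ in increasing lexicographic order, the matrix $M$ is thus upper triangular with $1$'s on the diagonal, so $\det M = 1$ and $M$ is invertible over $K$. Inverting $M$ expresses each $T_{j}$ as a $K$-linear combination of the $\lambda(l_{i})$, and the key formula then upgrades this to the spanning statement. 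Linear independence is the same triangularity in disguise: if $\sum_{i} c_{i}\, \lambda(l_{i}) = 0$, then $K$-linearity of $\lambda$ together with~\eqref{defl*} and the surjectivity $\ell(W_{n}) = L_{n}$ translates the relation into $([l_{j}], \, \sum_{i} c_{i}\, l_{i}) = 0$ for every $j$, that is, into the unipotent system $M\, c = 0$, which forces $c = 0$.

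The one subtle point is the spanning step, where it would be tempting but wasteful to invoke a rank or non-degeneracy argument valid only over a field or a torsion-free ring. Isolating the matrix $M$ is precisely what bypasses this difficulty, since its unipotent upper triangularity makes its inverse exist over any commutative ring $K$ with unity, and both halves of Proposition~\ref{lambdaLyndon} then follow from the single elementary input~\eqref{1Lynd}.
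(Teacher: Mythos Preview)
Your proof is correct. The paper does not reproduce a proof of this proposition here but merely cites it from \cite{Mich}; your argument is exactly the natural one, resting on the single input~\eqref{1Lynd} to make the matrix $M = \bigl(([l_j],l_i)\bigr)_{j,i}$ unipotent triangular and hence invertible over any commutative ring with unity, which simultaneously gives spanning (via your key formula $\lambda(w)=\sum_j ([l_j],w)\,T_j$ and $T=(M^{T})^{-1}\Lambda$) and linear independence (via $Mc=0$).
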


By a straightforward induction on $|w|$ we also have the following.

\begin{lemma} {\rm (\cite[Lemma 2.1]{Mich})} \label{l*reversal}
Let $\tilde{w}$ denote the reversal of the word $w$. Then
\[ {{\lambda}}(\tilde{w}) \, = \, (-1)^{|w| + 1} {{\lambda}}(w)\, \label{tildel*}. \]
\end{lemma}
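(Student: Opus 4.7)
The plan is to argue by induction on $n = |w|$, using the recursive characterization of $\lambda$ given in~\eqref{defrecl*} together with the elementary combinatorial identity $\widetilde{aub} = b \tilde{u} a$. Relation~\eqref{defrecl*} expresses $\lambda$ of a word of length $n$ in terms of its values on the two sub-words of length $n-1$ obtained by deleting the first or the last letter; reversal swaps these two sub-words, and this swap, combined with the minus sign in~\eqref{defrecl*}, is exactly what will produce the alternating factor $(-1)^{|w|+1}$.

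For the base cases, when $|w| \leq 1$ the word equals its own reversal (and is zero when $|w|=0$), while $(-1)^{|w|+1}$ evaluates to $1$ when $|w|=1$, so the claim is immediate. For the inductive step with $n \geq 2$, I would write $w = aub$ with $a,b \in A$ and $u \in A^{*}$, so that $\tilde{w} = b \tilde{u} a$, and apply~\eqref{defrecl*} on each side to obtain
\[ {\lambda}(w) \: = \: {\lambda}(au)\, b \: - \: {\lambda}(ub)\, a, \qquad {\lambda}(\tilde{w}) \: = \: {\lambda}(b \tilde{u})\, a \: - \: {\lambda}(\tilde{u} a)\, b. \]
Since $\widetilde{au} = \tilde{u} a$ and $\widetilde{ub} = b \tilde{u}$ are words of length $n-1$, the induction hypothesis yields ${\lambda}(\tilde{u} a) = (-1)^{n}\, {\lambda}(au)$ and ${\lambda}(b \tilde{u}) = (-1)^{n}\, {\lambda}(ub)$. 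Substituting these identities and factoring $(-1)^{n}$ out of the resulting expression for ${\lambda}(\tilde{w})$ gives $(-1)^{n}\bigl[{\lambda}(ub)\, a - {\lambda}(au)\, b\bigr] = -(-1)^{n}\,{\lambda}(w) = (-1)^{n+1}\,{\lambda}(w)$, which closes the induction.

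I do not anticipate any genuine obstacle here. The argument is pure sign bookkeeping: the induction hypothesis contributes one factor of $(-1)^{n}$, while the order in which the two terms of~\eqref{defrecl*} appear after reversal supplies one additional minus sign, and the combination of these two effects is precisely what the claim predicts. The only combinatorial input needed is the identity $\widetilde{aub} = b \tilde{u} a$, which is immediate from the definition of the reversal.
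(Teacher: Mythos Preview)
Your proof is correct and follows precisely the approach indicated in the paper, which merely states that the result follows ``by a straightforward induction on $|w|$'' without spelling out the details. Your argument supplies exactly those details: the base cases $|w|\le 1$ and the inductive step using the recursion~\eqref{defrecl*} together with $\widetilde{aub}=b\tilde{u}a$ constitute the intended straightforward induction.
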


What is of crucial importance for Problems~\ref{probl:1} up to~\ref{probl:4} is the kernel $\ker {\lambda}$ of $\lambda$.
Let $P, Q \in K {\langle A \rangle}$. We say that $P \, \equiv \, Q$ if $P - Q \in \ker {\lambda}$. We also define $P \, \sim \, Q$ if
there exist $k_{1}, k_{2} \in K^{*} = K \setminus \{ 0 \}$ such that $k_{1} \, P \, \equiv \, k_{2} \, Q$.
Let ${{\mathcal L}_{K}(A)}^{\perp}$ denote the orthogonal complement of ${\mathcal L}_{K}(A)$ with respect to
the scalar product (\ref{scalarproduct}) in $K {\langle A \rangle}$.
Then for an arbitrary commutative ring $K$ with unity the following hold.

\begin{lemma} {\rm (\cite[Lemmas 2.2 \& 2.3]{Mich})} \label{kerlambda}
\begin{description}
\item[\em{(i)}]
$\ker {\lambda} \; = \; {{\mathcal L}_{K}(A)}^{\perp} \; = \; {\mathcal V}_{K}(A)$.
\item[\em{(ii)}]
Two words $u$ and $v$ are twin (resp. anti-twin) with respect to ${\mathcal L}_{K}(A)$ if and only if
$u \, \equiv \, v$ (resp. $u \, \equiv \, - \, v$).
\end{description}
\end{lemma}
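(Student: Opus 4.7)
The plan is to extract everything from the adjointness relation $(\lambda(u),v)=(\ell(v),u)$ together with the already-recorded fact that $\{\ell(v):v\in A^{*}\}$ is a $K$-linear generating set of $\mathcal{L}_K(A)$. Both identities in part (i) will then reduce to a formal manipulation based on the non-degeneracy of the canonical scalar product, and part (ii) will be a one-line corollary obtained by applying part (i) to the polynomial $u\mp v$.

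For part (i), I would first extend $\lambda$ by $K$-linearity to all of $K\langle A\rangle$ and bilinearize the adjointness relation, so that $(\lambda(P),Q)=(\ell(Q),P)$ for every $P,Q\in K\langle A\rangle$. Since $A^{*}$ is an orthonormal basis, the scalar product is non-degenerate, and hence $P\in\ker\lambda$ iff $(\lambda(P),Q)=0$ for every $Q$. By the bilinearized adjointness this is equivalent to $(\ell(Q),P)=0$ for every $Q$, and since the $\ell(Q)$ span $\mathcal{L}_K(A)$ over $K$, this is in turn equivalent to $(R,P)=0$ for every $R\in\mathcal{L}_K(A)$, i.e.\ $P\in\mathcal{L}_K(A)^{\perp}$. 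For the second equality, a word $w$, viewed as the monic polynomial $1\cdot w$, lies in $\mathcal{L}_K(A)^{\perp}$ iff $(R,w)=0$ for every Lie polynomial $R$, which is exactly the defining condition for $w\in\mathcal{V}_K(A)$; the written equality $\mathcal{L}_K(A)^{\perp}=\mathcal{V}_K(A)$ is to be read in the sense that these two subsets of $A^{*}$ coincide, the abuse being that $\mathcal{V}_K(A)$ is literally a set of words rather than a submodule.

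For part (ii), I would apply part (i) to the polynomial $u\mp v$. By definition $u$ and $v$ are twin (respectively anti-twin) iff $(R,u)=\pm(R,v)$, i.e.\ $(R,u\mp v)=0$, for every $R\in\mathcal{L}_K(A)$; this is $u\mp v\in\mathcal{L}_K(A)^{\perp}=\ker\lambda$, i.e.\ $\lambda(u)=\pm\lambda(v)$, which is precisely $u\equiv\pm v$. I do not foresee a serious obstacle: no hypothesis on the characteristic of $K$ is needed, the only mild bookkeeping is the abuse of notation noted above, and the whole argument is driven entirely by the two ingredients already quoted in the excerpt, namely the adjointness definition of $\lambda$ and the fact that the image of $\ell$ spans the free Lie algebra.
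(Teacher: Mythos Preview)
Your argument is correct and is exactly the natural one: the equality $\ker\lambda=\mathcal{L}_K(A)^{\perp}$ drops out of the adjointness identity $(\lambda(P),Q)=(\ell(Q),P)$ together with the fact that the $\ell(Q)$ span $\mathcal{L}_K(A)$, and part (ii) follows by applying this to $u\mp v$. The paper does not reproduce a proof here---it simply cites \cite[Lemmas 2.2 \& 2.3]{Mich}---so there is no in-paper argument to compare against, but your derivation is precisely the standard one and matches what is done in the cited reference; your remark about the abuse of notation in writing $\mathcal{L}_K(A)^{\perp}=\mathcal{V}_K(A)$ (a submodule versus a set of words) is also apt.
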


In view of Lemma \ref{kerlambda}\,(i), when $K$ is of characteristic zero the notation $u \, \equiv \, 0$ means that $u$ is either a power
$a^{n}$ of a letter $a \in A$ with $n > 1$ or a palindrome of even length.

\medskip

The following result, originally due to Ree \cite{Ree}, is crucial to our work.

\begin{proposition}{\rm (\cite[Theorem 3.1\,(iv)]{Reut}, \cite[Problem 5.3.4\,]{Loth})} \label{Ree}
Let $\mathds Q$ denote the field of rational numbers and assume that $K$ is a commutative
$\mathds Q$-algebra. Then a polynomial $P \in K {\langle A \rangle}$ with zero constant term lies in
${{\mathcal L}_{K}(A)}^{\perp}$ if and only if it is a $K$-linear combination of proper shuffles.
\end{proposition}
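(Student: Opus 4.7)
The plan is to follow the classical Hopf-algebraic route to Ree's theorem, combining a duality between shuffle and deconcatenation with Friedrichs' characterization of Lie polynomials. First, I would equip $K\langle A \rangle$ with the coproduct $\Delta$ determined by $\Delta(a) = a \otimes 1 + 1 \otimes a$ for every $a \in A$ and extended as an algebra morphism under concatenation. A routine induction on $|w|$ driven by the recursive definition \eqref{recdefshuffle} of the shuffle then yields the duality
\[
(u \sh v, \, w) \; = \; (u \otimes v, \, \Delta(w)) \qquad \text{for all } u, v, w \in A^{*}.
\]
The second ingredient I would invoke is Friedrichs' criterion: when $K$ is of characteristic zero, a polynomial $R \in K\langle A \rangle$ belongs to ${\mathcal L}_{K}(A)$ if and only if $\Delta(R) = R \otimes 1 + 1 \otimes R$. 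The ``only if'' direction is elementary, since $\Delta$ is an algebra morphism for concatenation and the primitive elements form a Lie subalgebra containing $A$; the ``if'' direction is the Dynkin-Specht-Wever theorem, whose proof rests on the identity $\ell(R) = n \cdot R$ for every homogeneous Lie polynomial $R$ of degree $n$.

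Writing $\mathcal N$ for the $K$-span of all proper shuffles, the duality then shows that $R \perp \mathcal N$ is equivalent to $\Delta(R)$ having no $u \otimes v$ component with $|u|, |v| \geq 1$; applying the counit identities $(\varepsilon \otimes \mathrm{id})\Delta = \mathrm{id} = (\mathrm{id} \otimes \varepsilon)\Delta$ forces $\Delta(R) = R \otimes 1 + 1 \otimes R - (R, 1)(1 \otimes 1)$, so that $R - (R, 1) \cdot 1$ is primitive. Friedrichs' criterion then identifies this kernel as ${\mathcal L}_{K}(A)$, yielding $\mathcal N^{\perp} = K \cdot 1 \oplus {\mathcal L}_{K}(A)$. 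Since each multi-homogeneous component of $K\langle A\rangle$ is a free $K$-module with the words in that component as an orthonormal basis, double-orthogonal reduction applies component by component, and taking perps once more yields $\mathcal N = ({\mathcal L}_{K}(A) \oplus K \cdot 1)^{\perp} = {\mathcal L}_{K}(A)^{\perp} \cap (K \cdot 1)^{\perp}$. This says precisely that a polynomial with zero constant term lies in ${\mathcal L}_{K}(A)^{\perp}$ if and only if it is a $K$-linear combination of proper shuffles.

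The hardest step will be Friedrichs' criterion, more specifically its ``if'' direction: its proof genuinely requires inverting the degree $n$, which is why the hypothesis that $K$ is a $\mathds Q$-algebra cannot be dropped --- in fact the proposition itself fails in positive characteristic, precisely because Friedrichs does. The only remaining subtlety is the double-orthogonal identity $\mathcal N^{\perp\perp} = \mathcal N$; this is safe because, within each multi-homogeneous component, the words form an orthonormal $K$-basis, so the restricted scalar product is perfect.
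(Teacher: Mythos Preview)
The paper does not supply its own proof of this proposition: it is quoted with references to \cite[Theorem 3.1\,(iv)]{Reut} and \cite[Problem 5.3.4]{Loth}, the result being originally due to Ree \cite{Ree}. Your argument is the standard Hopf-algebraic route --- the shuffle/deconcatenation duality $(u \sh v, w) = (u \otimes v, \Delta w)$ combined with Friedrichs' criterion and a double-orthogonal step --- which is precisely the treatment given in Reutenauer's book; so there is nothing internal to compare against, and your proof is correct and coincides with the external source the paper invokes.

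One small remark on the double-orthogonal step: your justification (``orthonormal basis, so the pairing is perfect'') is not quite enough for an arbitrary commutative ring, but it is enough here because the canonical form on each multi-homogeneous component of $\mathds Q\langle A\rangle$ is positive definite over $\mathds Q$, hence non-degenerate on every $\mathds Q$-subspace; this lets you identify $(K \otimes V)^{\perp}$ with $K \otimes V^{\perp}$ after base change to the $\mathds Q$-algebra $K$, which is what makes $\mathcal N^{\perp\perp} = \mathcal N$ go through.
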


\medskip

In view of Problems~\ref{probl:1} up to~\ref{probl:4} Sch\"utzenberger considered, for a given word $w$ of length $n$, the unique non-negative generator ${\gamma}(w)$ of the ideal $\{ (P,w) \, : \, P \in {\mathcal L}_{\mathds Z}(A) \}$ of $\mathds Z$ (see \cite[\S 1.6.1]{Reut}).
He also posed the problem of determining the set of all words $w$ with ${\gamma}(w) = 1$. By \eqref{1Lynd} it follows that Lyndon words
have this property. We will show later on that there are other families of words with ${\gamma}(w) = 1$ or ${\gamma}(w)$ odd; it turns out that
this will be another important tool for Problem~\ref{probl:3}.
The calculation of ${\gamma}(w)$ for a given word $w$ has been given by the following result.

\begin{proposition} {\rm (\cite[Theorem 2.6]{Mich})} \label{gcd}
Let $w$ be a word in ${A}^{+}$ and $\lambda$ be the adjoint endomorphism of the left-normed Lie
bracketing $l$ of the free Lie ring on $A$. If ${\lambda}(w) = 0$ then ${\gamma}(w) = 0$; otherwise it is equal to
the greatest common divisor of the coefficients that appear in the monomials of ${\lambda}(w)$.
\end{proposition}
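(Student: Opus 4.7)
The plan is to translate the ideal $I_{w} = \{(P, w) \, : \, P \in {\mathcal L}_{\mathds Z}(A)\}$ into an ideal of $\mathds Z$ generated in an explicit way by the coefficients of ${\lambda}(w)$, by exploiting the defining adjunction $({\lambda}(u), v) = ({\ell}(v), u)$ in \eqref{defl*}. Once this translation is in place, both halves of the statement follow immediately from the definition of $\gcd$.

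First I would invoke the fact recalled in the preliminaries that for any commutative ring with unity, and in particular for $K = \mathds Z$, the set $\{{\ell}(u) \: : \: u \in A^{+}\}$ is a $\mathds Z$-linear generating set of ${\mathcal L}_{\mathds Z}(A)$; this is a standard consequence of the Jacobi identity, which allows us to rewrite any iterated bracket of letters as a $\mathds Z$-linear combination of left-normed brackets $\ell(u)$ with $u \in A^{+}$. Consequently any $P \in {\mathcal L}_{\mathds Z}(A)$ admits a (non unique) decomposition $P = \sum_{u \in A^{+}} c_{u}\, {\ell}(u)$ with $c_{u} \in \mathds Z$ and only finitely many $c_{u}$ non-zero. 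Pairing with the word $w$ and using \eqref{defl*} would then give
\[
(P, w) \: = \: \sum_{u \in A^{+}} c_{u} \, ({\ell}(u), w) \: = \: \sum_{u \in A^{+}} c_{u} \, ({\lambda}(w), u),
\]
so every element of $I_{w}$ is a $\mathds Z$-linear combination of the coefficients of ${\lambda}(w)$. Conversely, taking $P = {\ell}(u)$ for a single fixed word $u$ shows that each individual coefficient $({\lambda}(w), u)$ belongs to $I_{w}$. Hence $I_{w}$ is precisely the ideal of $\mathds Z$ generated by the set of coefficients $\{({\lambda}(w), u) \: : \: u \in A^{+}\}$ of the polynomial ${\lambda}(w)$.

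The conclusion is then immediate: if ${\lambda}(w) = 0$ every such coefficient vanishes, so $I_{w} = (0)$ and its unique non-negative generator is ${\gamma}(w) = 0$; otherwise the unique non-negative generator of the ideal generated by the non-zero coefficients of ${\lambda}(w)$ is exactly their greatest common divisor, which is ${\gamma}(w)$ by definition. The only subtle point, and the main obstacle in the argument, is the integrality of the decomposition $P = \sum c_{u}\,{\ell}(u)$ with $c_{u} \in \mathds Z$ (rather than merely in $\mathds Q$), but this is exactly what is encoded in the cited generating-set property of $\{\ell(u)\}$ over an arbitrary commutative ring with unity, so no additional work is required here.
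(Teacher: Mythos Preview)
Your argument is correct: the adjunction $({\lambda}(w),u)=({\ell}(u),w)$ together with the fact that $\{{\ell}(u):u\in A^{+}\}$ generates ${\mathcal L}_{\mathds Z}(A)$ over $\mathds Z$ identifies $I_{w}$ with the ideal generated by the coefficients of ${\lambda}(w)$, and the conclusion follows. The paper does not give its own proof of this proposition here but cites it from \cite[Theorem 2.6]{Mich}; your approach is the natural one and is essentially what that reference does.
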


\bigskip

%%%%%%%%%%%%%%%%%%%%%%%%%%%%%%%%%%%%%%%%%%%%%% PARIS SHUFFLE %%%%%%%%%%%%%%%%%%%%%%%%%%%%%%%%%%%%%%%%%%%%%%%%%%%%%%%%%%%

\section{Calculation of $\mathbf {{\lambda}}$}

Our starting point is the following result for words of multi-degree $(k, 1)$ with $k \geq 0$.

\begin{lemma} {\rm (\cite[Lemma 2.8]{Mich})} \label{lambda1b}
{\em Let $k$ and $l$ be non-negative integers which are not both equal to zero. Then}
\begin{equation*}
{\lambda}(a^{k}ba^{l}) \: = \: {(-1)}^{k} \binom{k  + l }{k} {\lambda}(ba^{k + l})
                   \: = \: {(-1)}^{k} \binom{k  + l }{k} \, \{ ba^{k + l } - aba^{k + l - 1} \} .
%{\gamma}(a^{k}ba^{l}) & = & \binom{k  + l }{k}.
\end{equation*}
\end{lemma}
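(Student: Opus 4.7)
The plan is to prove both parts of the lemma together: the explicit bracketed form $\lambda(ba^n) = ba^n - aba^{n-1}$ for $n \geq 1$, and the reduction $\lambda(a^{k}ba^{l}) = (-1)^k\binom{k+l}{k}\lambda(ba^{k+l})$. The main tool is the recursion \eqref{defrecl*} applied judiciously; the reversal Lemma~\ref{l*reversal} handles the $l = 0$ case for free.

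I would first dispose of pure powers by noting that $\lambda(a^n) = 0$ for every $n \geq 2$: this follows at once from \eqref{defrecl*} written as $\lambda(a \cdot a^{n-2} \cdot a) = \lambda(a^{n-1})\,a - \lambda(a^{n-1})\,a = 0$, starting from $\lambda(a^2) = \lambda(a)\,a - \lambda(a)\,a = 0$. Next I would compute $\lambda(ba^n)$ by induction on $n$. Applying \eqref{defrecl*} to the word $b \cdot a^{n-1} \cdot a$ gives $\lambda(ba^n) = \lambda(ba^{n-1})\,a - \lambda(a^n)\,b$, and the second term vanishes for $n \geq 2$ by the previous step. The base case $\lambda(ba) = \lambda(b)\,a - \lambda(a)\,b = ba - ab$ is immediate, so the induction yields $\lambda(ba^n) = (ba - ab)\,a^{n-1} = ba^n - aba^{n-1}$. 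As a side benefit, this proves the key identity $\lambda(ba^{n-1})\,a = \lambda(ba^n)$, which drives the main reduction below.

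For the reduction formula I would argue by induction on $k + l$. The case $k = 0$ is exactly the previous step. The case $l = 0$ with $k \geq 1$ is immediate from Lemma~\ref{l*reversal}, since the reversal of $ba^k$ is $a^k b$, contributing the sign $(-1)^{k+2} = (-1)^k$ that matches the claim (with $\binom{k}{k} = 1$). For $k, l \geq 1$, I would apply \eqref{defrecl*} to $a^{k}ba^{l}$ viewed as $a \cdot (a^{k-1}ba^{l-1}) \cdot a$, obtaining
\[
\lambda(a^{k}ba^{l}) \: = \: \lambda(a^{k}ba^{l-1})\,a \: - \: \lambda(a^{k-1}ba^{l})\,a.
\]
Both summands on the right are covered by the inductive hypothesis with total degree $k+l-1$, so collecting the common factor $(-1)^k$ (observing $-(-1)^{k-1} = (-1)^k$) and invoking Pascal's rule $\binom{k+l-1}{k} + \binom{k+l-1}{k-1} = \binom{k+l}{k}$ gives the coefficient $(-1)^k\binom{k+l}{k}$ in front of $\lambda(ba^{k+l-1})\,a$. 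The identity $\lambda(ba^{k+l-1})\,a = \lambda(ba^{k+l})$ from the previous paragraph closes the induction.

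I do not expect a genuine obstacle: the whole proof is careful bookkeeping of signs and binomial coefficients around two nested inductions. The only mildly delicate point is arranging the two steps in the right order, so that the explicit form of $\lambda(ba^n)$ is available at the moment one needs the right-concatenation identity $\lambda(ba^{n-1})\,a = \lambda(ba^n)$ to collapse the Pascal combination into a single $\lambda(ba^{k+l})$.
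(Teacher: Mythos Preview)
Your proof is correct and follows essentially the same route as the paper's own argument: first establish $\lambda(ba^{m}) = ba^{m} - aba^{m-1}$ by an easy induction on $m$, then use the recursion $\lambda(a^{k}ba^{l}) = \{\lambda(a^{k}ba^{l-1}) - \lambda(a^{k-1}ba^{l})\}\,a$ together with Pascal's identity to induct on $k+l$. Your treatment is slightly more explicit (you separate out $\lambda(a^{n})=0$ and invoke Lemma~\ref{l*reversal} for the $l=0$ boundary), but the substance is identical.
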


Using the shuffle product of words the polynomial ${\lambda}(w)$ can be calculated recursively in terms of all factors $u$ of fixed length
$r \geq 1$ of $w$.

\begin{proposition} {\rm (\cite[Proposition 3.1]{Mich})} \label{reclambdashuffle}
Let $w$ be a word and $r$ be a positive integer with $r \leq |w|$.
Consider the set of all factors $u$ of length $r$ of $w$. Then
\[{\lambda}(w) \: = \: \sum_{{w = sut} \atop {|u|=r}} \, {\lambda}(u) \, (-1)^{|s|} \, \{ \tilde {s} \, \sh \, \,  t \} \, . \]
\end{proposition}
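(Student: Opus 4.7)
The plan is to verify the identity by checking coefficients against the orthonormal basis $A^{*}$, using the adjoint relation $({\lambda}(w), v) = (\ell(v), w)$. Fix an arbitrary $v \in A^{*}$ of length $n = |w|$, split it uniquely as $v = v_1 v_2$ with $|v_1| = r$ and $|v_2| = n - r$, and compute $(\ell(v), w)$ by expanding $\ell(v_1 v_2)$ explicitly.

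The key auxiliary expansion I would first establish is
\[
\ell(v_1 v_2) \: = \: \sum_{I \subseteq [m]} (-1)^{|I|}\, \widetilde{(v_2)_{I}}\, \ell(v_1)\, (v_2)_{I^{c}},
\]
where $v_2 = c_1 \cdots c_m$ with $m = n - r$, and $(v_2)_I = c_{i_1} \cdots c_{i_k}$ denotes the subword of $v_2$ at the positions $I = \{i_1 < \cdots < i_k\}$. This follows by a straightforward induction on $m$ from $\ell(ua) = \ell(u)\, a - a\, \ell(u)$, i.e., from the standard expansion of the iterated left-normed bracket $[\,\ell(v_1), c_1, \ldots, c_m\,]$.

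Reading off the coefficient of $w$: for each $I$ the concatenation $\widetilde{(v_2)_{I}}\, \ell(v_1)\, (v_2)_{I^{c}}$ contributes to $w$ only through factorizations $w = sut$ with $|u| = r$, $s = \widetilde{(v_2)_I}$ and $t = (v_2)_{I^c}$, and each such contributes $(-1)^{|I|}(\ell(v_1), u)$. Regrouping by the triple $(s,u,t)$, the number of subsets $I$ realizing a given such triple is precisely the number of interleavings of $\tilde{s}$ and $t$ that yield $v_2$, namely $(\tilde{s} \sh t, v_2)$; moreover $|I| = |s|$. Hence
\[
({\lambda}(w), v) \: = \: (\ell(v), w) \: = \: \sum_{\substack{w = s u t \\ |u| = r}} (-1)^{|s|}\, (\ell(v_1), u)\, (\tilde{s} \sh t, v_2).
\]
The coefficient of $v$ in the right-hand side of the proposed formula equals $\sum_{w = sut,\, |u|=r}\sum_{v = v_1' v_2'} (-1)^{|s|}\, ({\lambda}(u), v_1')\, (\tilde{s} \sh t, v_2')$, but $({\lambda}(u), v_1') = (\ell(v_1'), u)$ vanishes unless $|v_1'| = r$, forcing the unique split $v_1' = v_1$, $v_2' = v_2$. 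Thus both coefficients agree for every $v$, and the identity follows.

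The main obstacle will be the combinatorial bookkeeping in the middle step: carefully matching the subsets $I \subseteq [m]$ with the shuffle interleavings of $\tilde{s}$ and $t$ inside $v_2$, and confirming that $|I| = |s|$ so that the signs $(-1)^{|I|}$ and $(-1)^{|s|}$ correspond under the bijection. An alternative route I would consider is induction on $|w| - r$ using ${\lambda}(a u b) = {\lambda}(au)\, b - {\lambda}(ub)\, a$, but this would force one to rewrite expressions of the form $(\tilde{s} \sh t)\, b$ through a right-end shuffle recursion, which seems more tedious than the adjoint argument above.
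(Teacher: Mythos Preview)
Your argument is correct. The expansion
\[
\ell(v_1 v_2)=\sum_{I\subseteq[m]}(-1)^{|I|}\,\widetilde{(v_2)_I}\,\ell(v_1)\,(v_2)_{I^c}
\]
is exactly the standard unfolding of the iterated bracket $[\ell(v_1),c_1,\ldots,c_m]$, and your identification of the count $\#\{I:(v_2)_I=\tilde s,\ (v_2)_{I^c}=t\}$ with the shuffle coefficient $(\tilde s\,\sh\,t,\,v_2)$ is precisely the combinatorial meaning of the shuffle product. The degree argument forcing $v_1'=v_1$, $v_2'=v_2$ on the right-hand side is also fine. So the bookkeeping you flag as the ``main obstacle'' is in fact routine and goes through exactly as you describe.

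As for comparison: the present paper does not prove this proposition at all; it simply quotes it from \cite[Proposition~3.1]{Mich}. In that reference the proof proceeds by induction on $|w|$ (essentially the alternative route you mention at the end), using the recursion \eqref{defrecl*} for $\lambda$ together with the one-letter shuffle identity $a(\tilde s\,\sh\,t)+(\tilde s\,\sh\,t)a=(a\tilde s)\,\sh\,t+\tilde s\,\sh\,(ta)$ to push the formula from length $n-1$ to length $n$. Your adjoint/coefficient approach is cleaner in that it avoids this shuffle manipulation entirely, trading it for the transparent subset expansion of the bracket; the inductive route, on the other hand, stays closer to the recursive definitions used throughout the paper. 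Both are short and valid.
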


The case where the factors $u$ of the word $w$ are letters (i.e., we are at the bottom level $r=1$) seems to be known
in a different setting in the literature (see \cite[Lemma 2.1]{Kats + Koba}, cf. \cite[Ex. 4.6.5\,(2), p.126]{Duch + Krob})
even for the broader class of free partially commutative Lie algebras.

Decomposing further the factors $u$ up to multi-degree we obtain
\begin{equation} \label{multilambdashuffle}
{\lambda}(w) \: = \: \sum_{{w = sut} \atop {|u|=r}} \,
\sum_{{{\alpha}(u) = (k_{1}, k_{2}, \ldots , k_{q})} \atop {k_{1} + k_{2} + \dots + k_{q} = r}} \!\!\!\!\!\!\!\!\!\!\!
{\lambda}(u) \, (-1)^{|s|} \, \{ \tilde {s} \, \sh \, t \} \, .
\end{equation}

Dealing now with the equation $w \, \equiv \, \pm \, w'$ or more generally with the equation $w \, \sim \, w'$, we obtain the following result.

\begin{proposition} \label{multieqlambdashuffle}
Suppose that $w$ and $w'$ are two words of given multi-degree $(m_{1}, m_{2}, \ldots , m_{q})$ on an alphabet with $q$ letters such that
${\eta} \, {\lambda}(w) \, = \, {\eta}' \, {\lambda}(w')$, for some ${\eta}, {\eta}' \in {\mathds Z}$.
Then for all factors of a fixed multi-degree $(k_{1}, k_{2}, \ldots , k_{q})$ we obtain
\begin{equation} \label{keyresult}
{\eta} \!\!\!\!\!\!\!\!\!\!\! \sum_{{w = sut} \atop {{\alpha}(u) = (k_{1}, k_{2}, \ldots , k_{q})}} \!\!\!\!\!\!\!\!\!\!\!
{\lambda}(u) \, (-1)^{|s|} \, \{ \tilde{s} \, \sh \, \,  t \}
\: = \: {\eta}' \!\!\!\!\!\!\!\!\!\!\!
\sum_{{w' = s'u't'} \atop {{\alpha}(u') = (k_{1}, k_{2}, \ldots , k_{q})}} \!\!\!\!\!\!\!\!\!\!\!
{\lambda}(u') \, (-1)^{|s'|} \, \{ \widetilde{s'} \, \sh \, \,  t' \} \, .
\end{equation}
\end{proposition}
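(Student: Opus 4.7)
The plan is to read equation~\eqref{multilambdashuffle} as a direct sum decomposition. For every multi-degree $\mathbf{k}=(k_{1},\ldots,k_{q})$ with $|\mathbf{k}|=r:=k_{1}+\cdots+k_{q}$, set
\[
T_{\mathbf{k}}(w) \: = \!\!\!\!\!\!\!\!\!\!\!
\sum_{{w=sut}\atop{\alpha(u)=\mathbf{k}}}\!\!\!\!\!\!\!\!\!\!\!
{\lambda}(u)\,(-1)^{|s|}\,\{\tilde{s}\sh t\},
\]
so that \eqref{multilambdashuffle} becomes $\lambda(w)=\sum_{|\mathbf{k}|=r}T_{\mathbf{k}}(w)$. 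I claim that the polynomials $T_{\mathbf{k}}(w)$, for distinct $\mathbf{k}$, have pairwise disjoint supports; granted this, the hypothesis $\eta\,\lambda(w)=\eta'\,\lambda(w')$ will split termwise into one identity per $\mathbf{k}$, which is exactly \eqref{keyresult}.

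The key observation is that the recursive definition \eqref{defrecl*} makes $\lambda$ preserve both length and multi-degree. Consequently the support of $\lambda(u)$ sits inside $A^{r}$ and consists only of words of multi-degree $\mathbf{k}=\alpha(u)$. Since the product $\lambda(u)\{\tilde{s}\sh t\}$ appearing inside $T_{\mathbf{k}}(w)$ is a concatenation with $\lambda(u)$ placed on the left, every word in the support of $T_{\mathbf{k}}(w)$ has its length-$r$ prefix of multi-degree exactly $\mathbf{k}$, and the identical analysis applies to $T_{\mathbf{k}}(w')$.

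To conclude, I would consider, inside the multi-homogeneous component of $K\langle A\rangle$ of multi-degree $(m_{1},\ldots,m_{q})$, the direct sum decomposition $\bigoplus_{|\mathbf{k}|=r}V_{\mathbf{k}}^{(r)}$, where $V_{\mathbf{k}}^{(r)}$ is the $K$-span of the words whose length-$r$ prefix has multi-degree $\mathbf{k}$. Both $\eta\,T_{\mathbf{k}}(w)$ and $\eta'\,T_{\mathbf{k}}(w')$ lie in $V_{\mathbf{k}}^{(r)}$, so rewriting the hypothesis as
\[
\sum_{|\mathbf{k}|=r}\bigl(\eta\,T_{\mathbf{k}}(w)-\eta'\,T_{\mathbf{k}}(w')\bigr)=0
\]
and projecting onto each summand yields \eqref{keyresult}. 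The only substantive point is isolating the correct grading---the length-$r$ prefix multi-degree---and verifying that each $T_{\mathbf{k}}$ indeed sits in a single graded piece; I expect this to be the only real conceptual step, everything else being formal.
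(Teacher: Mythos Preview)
Your proposal is correct and follows essentially the same idea as the paper's proof: both rest on the observation that every monomial occurring in the $\mathbf{k}$-part of \eqref{multilambdashuffle} has its length-$r$ prefix of multi-degree exactly $\mathbf{k}$, so contributions from distinct $\mathbf{k}$ cannot interact. The paper phrases this as ``cancellation between monomials $x\cdot y$ and $x'\cdot y'$ forces $\alpha(x)=\alpha(x')$,'' whereas you package the same fact as a direct-sum decomposition $\bigoplus_{\mathbf{k}}V_{\mathbf{k}}^{(r)}$ indexed by prefix multi-degree; the arguments are interchangeable.
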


\begin{proof}
The polynomials ${\lambda}(u)$ and $\tilde{s} \, \sh \, \,  t$ are multi-homogeneous, so in view of \eqref{multilambdashuffle},
a typical monomial that appears in ${\lambda}(w)$ is of the form ${\eta}_{x,y} \, x \cdot y$, where $x$ is a word
of multi-degree $(d_{1}, d_{2}, \ldots , d_{q})$ appearing in ${\lambda}(u)$, $y$ is a word of multi-degree
$(m_{1} - d_{1}, m_{2} - d_{2}, \ldots , m_{q} - d_{q})$ appearing in $\tilde{s} \, \sh \, \,  t$ and ${\eta}_{x,y} \in \mathds{Z}$.
Similarly a typical monomial in ${\lambda}(w')$ is of the form ${\eta}^{'}_{x,y} \, x' \cdot y'$, where $x'$ is a word of multi-degree
$(e_{1}, e_{2}, \ldots , e_{q})$ appearing in ${\lambda}(u')$, $y$ is a word of multi-degree
$(m_{1} - e_{1}, m_{2} - e_{2}, \ldots , m_{q} - e_{q})$ appearing in $\tilde{s'} \, \sh \, \,  t'$ and ${\eta}^{'}_{x,y} \in \mathds{Z}$.
Since $|u| = |u'|$, the only possible way that the terms ${\eta}_{x,y} \, x \cdot y$ and ${\eta}^{'}_{x,y} \, x' \cdot y'$ cancel each other out
in the equality ${\lambda}(w) \, = \, \pm \, {\lambda}(w')$ is that ${\alpha}(x) = {\alpha}(x')$ and ${\alpha}(y) = {\alpha}(y')$. Thus
we must have $d_{i} = e_{i}$, for each $i = 1, 2, \ldots , q$. Then for factors of a fixed multi-degree the result follows.
\end{proof}

Suppose now that our given words $w, w'$ are over a two lettered alphabet $A = \{ a, b \}$. Fix a multi-degree $(k, \, l)$ and consider all
factors $u$ and $u'$ of $w$ and $w'$ respectively with ${\alpha}(u) = {\alpha}(u') = (k, \, l)$. If at least one of $k,l$ is equal to $1$ - i.e.,
one of the letters $a, b$ appears once - or $k = l = 2$ then the corresponding terms ${\lambda}(a^{k}b)$, ${\lambda}(b^{l}a)$ and ${\lambda}(a^{2}b^{2})$ will factor out of \eqref{keyresult}. This is due to Lemma~\ref{lambda1b} and the easily checked equality
${\lambda}(abab) = - 2 {\lambda}(a^{2}b^{2}) = 2 {\lambda}(b^{2}a^{2}) = - {\lambda}(baba)$, respectively.
In this way we obtain an equality in the shuffle algebra that looks like
\begin{equation}
\sum_{i} \, {\eta}_{i} \, (-1)^{|s_{i}|} \, \{ \widetilde{s_{i}} \, \sh \, t_{i} \} \: = \:
\sum_{i} \, {\eta}^{'}_{i} \, (-1)^{|s^{'}_{i}|} \, \{ \widetilde{s^{'}_{i}} \, \sh \, t^{'}_{i} \} \, ,
\end{equation}
for some integer coefficients ${\eta}_{i}$ and ${\eta}^{'}_{i}$. From this point and on we either use Proposition~\ref{Ree} directly or in more
difficult situations where all shuffles are proper we act on the left or on the right accordingly by suitable Lie polynomials using Proposition~\ref{minhpetitot}.

\medskip

The following result is a recursive formula for the calculation of ${\lambda}(w)$ which is strongly related to the Pascal triangle.

\begin{proposition} \label{lambdapascal}
Let $k, l$ be non negative integers. Then
\begin{equation*}
{\lambda}(a^{k}buba^{l})  \quad = \quad
(-1)^{k+1} \sum_{i=0}^{l} \, \binom{k+i}{i} \, {\lambda}(uba^{l-i}) \, ba^{k+i} \quad + \quad
           \sum_{j=0}^{k} \, (-1)^{j} \binom{l+j}{j} \, {\lambda}(a^{k-j}bu) \, ba^{l+j}.
\end{equation*}
\end{proposition}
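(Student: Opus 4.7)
The plan is a double induction on $(k, l)$, using only the defining recursion
\[ \lambda(xwy) \;=\; \lambda(xw)\,y \;-\; \lambda(wy)\,x \]
of \eqref{defrecl*} together with Pascal's rule $\binom{n}{i} = \binom{n-1}{i-1} + \binom{n-1}{i}$. Write $W_{k,l} := a^{k} b u b a^{l}$. The base case $k = l = 0$ is immediate: the claimed formula collapses to $\lambda(bub) = \lambda(bu)\,b - \lambda(ub)\,b$, which is precisely the recursion applied to $(b, u, b)$.

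For the principal inductive step I would assume $k, l \geq 1$, so that both outer letters of $W_{k,l}$ are $a$ and the recursion yields
\[ \lambda(W_{k,l}) \;=\; \lambda(W_{k, l-1})\,a \;-\; \lambda(W_{k-1, l})\,a. \]
Substituting the inductive expressions into the right-hand side, I would then shift the summation index $i \mapsto i + 1$ in the $\lambda(uba^{\,\cdot})$-part of $\lambda(W_{k, l-1})\,a$ and $j \mapsto j + 1$ in the $\lambda(a^{\,\cdot}bu)$-part of $\lambda(W_{k-1, l})\,a$, so that every trailing block has the desired form $ba^{k+i}$ or $ba^{l+j}$. The combined coefficient of $\lambda(uba^{l-i})\,ba^{k+i}$ for $1 \leq i \leq l$ then equals $(-1)^{k+1}\bigl[\binom{k+i-1}{i-1} + \binom{k+i-1}{i}\bigr] = (-1)^{k+1}\binom{k+i}{i}$ by Pascal's rule, and the analogous identity collapses the coefficient of $\lambda(a^{k-j}bu)\,ba^{l+j}$ for $1 \leq j \leq k$ to $(-1)^{j}\binom{l+j}{j}$; the extremal contributions at $i = 0$ and $j = 0$ are supplied by a single term each (from $-\lambda(W_{k-1,l})\,a$ and $\lambda(W_{k,l-1})\,a$, respectively) and already carry the correct coefficients $(-1)^{k+1}\binom{k}{0}$ and $\binom{l}{0}$.

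The two boundary strands of the induction need only single inductions. When $l = 0$ and $k \geq 1$, the word $W_{k,0} = a^{k} b u b$ ends in $b$, so the recursion reads $\lambda(W_{k,0}) = \lambda(a^{k}bu)\,b - \lambda(W_{k-1, 0})\,a$; the extra ``external'' term $\lambda(a^{k}bu)\,b$ supplies the $j = 0$ summand of the target formula, while the inductive part, after the shift $j \mapsto j + 1$, provides the $j \geq 1$ summands together with the single $(-1)^{k+1}\lambda(ub)\,ba^{k}$ term. The case $k = 0$ with $l \geq 1$ is symmetric, based on $\lambda(W_{0,l}) = \lambda(W_{0,l-1})\,a - \lambda(uba^{l})\,b$.

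The only obstacle is the careful bookkeeping of shifted indices and signs across the two inductive contributions; this is entirely mechanical once Pascal's identity is invoked, and no further combinatorial input is required.
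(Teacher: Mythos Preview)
Your proposal is correct and follows essentially the same approach as the paper's proof: both argue from the base case $k=l=0$, handle the boundary strand $k=0$ (or, symmetrically, $l=0$) by a single induction, and reduce the interior case $k,l\ge 1$ via the recursion $\lambda(W_{k,l})=\lambda(W_{k,l-1})\,a-\lambda(W_{k-1,l})\,a$ together with Pascal's rule. The only cosmetic difference is that the paper organises the interior step as an induction on $k+l$ (assuming $k\le l$ without loss of generality), whereas you phrase it as a double induction on $(k,l)$; the computations are identical.
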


\begin{proof}
This result is essentially the same as \cite[Proposition 5.4]{Mich}; it had been stated there in a commutative algebra setting using the notion
of Pascal descent polynomials. For completeness, we give a proof.

Without loss of generality we may assume that $k \leq l$. The case where $k = l = 0$ follows trivially as it is equivalent to
${\lambda}(bub) = {\lambda}(bu)b \, - \, {\lambda}(ub)b$.
The case $k = 0 < l$  is written as
\begin{equation} \label{bubal}
{\lambda}(buba^{l}) \: = \: - \, \sum_{i=0}^{l} \, {\lambda}(uba^{l-i}) \, ba^{i}
                             \: + \: {\lambda}(bu)ba^{l} \, ,
\end{equation}
and follows by an immediate induction on $l$.

Suppose that $0 < k \leq l$. Then the induction is a bit more tedious and is done on $k + l = m$.
For $m = 2$, i.e., $k = l = 1$ the result follows trivially. Suppose that it holds for all pairs
$(k', \, l')$ with $1 \leq k' \leq l'$ and $k' + l' \leq m - 1$. In particular, it holds for the pairs
$(k-1, \, l)$ and $(k, \, l-1)$.
We will show that it will also hold for the pair $(k, \, l)$ with $1 \leq k \leq l$ and $k + l = m$.
For brevity let $w = a^{k}buba^{l}$.
\begin{eqnarray*}
{\lambda}(w)  & = &  {\lambda}(a^{k}buba^{l-1}) \, a  \: - \: {\lambda}(a^{k-1}buba^{l}) \, a \\
& = &  (-1)^{k+1} \sum_{i=0}^{l-1} \, \binom{k+i}{i} {\lambda}(uba^{l-1-i}) \, ba^{k+i+1}
        \: + \: \sum_{j=0}^{k} \, (-1)^{j} \binom{l-1+j}{j} {\lambda}(a^{k-j}bu) \, ba^{l-1+j+1}\\
&  & \!\!\!\!\! -(-1)^{k-1+1} \sum_{i=0}^{l} \, \binom{k-1+i}{i} {\lambda}(uba^{l-i}) \, ba^{k-1+i+1}
        \: - \: \sum_{j=0}^{k-1} \, (-1)^{j} \binom{l+j}{j} {\lambda}(a^{k-1-j}bu) \, ba^{l+j+1} \\
& = &  (-1)^{k+1} \Big\{ \sum_{i=0}^{l-1} \, \binom{k+i}{i} {\lambda}(uba^{l-1-i}) \, ba^{k+i+1}
\, + \, \sum_{i=0}^{l} \, \binom{k-1+i}{i} {\lambda}(uba^{l-i}) \, ba^{k+i} \Big\} \\
&  & \!\!\!\!\! \, + \, \sum_{j=0}^{k} \, (-1)^{j} \binom{l-1+j}{j} {\lambda}(a^{k-j}bu) \, ba^{l+j}
\: + \: \sum_{j=0}^{k-1} \, (-1)^{j+1} \binom{l+j}{j} {\lambda}(a^{k-1-j}bu) \, ba^{l+j+1} \\
& = &  (-1)^{k+1} \Big\{ \sum_{i=1}^{l} \, \Big[ \binom{k+i-1}{i-1} + \binom{k-1+i}{i} \Big] \,
     {\lambda}(uba^{l-i}) \, ba^{k+i} \: + \: \binom{k-1+0}{0} {\lambda}(uba^{l-0}) \, ba^{k+0} \, \Big\} \\
&  & \!\!\!\!\! \: + \: \Big\{ \sum_{j=1}^{k} \, (-1)^{j} \Big[ \binom{l-1+j}{j} + \binom{l+j-1}{j-1} \Big]
  {\lambda}(a^{k-j}bu) \, ba^{l+j} \: + \: (-1)^{0} \binom{l-1+0}{0} {\lambda}(a^{k-0}bu) \, ba^{l+0} \, \Big\} \\
& = &  (-1)^{k+1} \big\{ \sum_{i=1}^{l} \, \binom{k+i}{i} {\lambda}(uba^{l-i}) \, ba^{k+i} \: + \:
   \binom{k + 0}{0}  {\lambda}(uba^{l-0}) \, ba^{k+0} \Big\} \\
&  & \!\!\!\!\! \: + \: \big\{ \sum_{j=1}^{k} \, (-1)^{j} \binom{l+j}{j} {\lambda}(a^{k-j}bu) \, ba^{l+j}
\: + \: (-1)^{0}\binom{l+0}{0} {\lambda}(a^{k-0}bu) \, ba^{l+0} \Big\} \\
& = &  (-1)^{k+1} \sum_{i=0}^{l} \, \binom{k+i}{i} \, {\lambda}(uba^{l-i}) \, ba^{k+i} \quad + \quad
           \sum_{j=0}^{k} \, (-1)^{j} \binom{l+j}{j} \, {\lambda}(a^{k-j}bu) \, ba^{l+j},
\end{eqnarray*}
as required.
\end{proof}

\begin{corollary} \label{Pi}
Suppose that $k \leq l$. Then
\[ {\lambda}(a^{k}buba^{l}) = P_{k+l} \, ba^{k+l} \: + \: P_{k+l-1} \, ba^{k+l-1} \: + \: \cdots \: + \:
                           P_{k+1} \, ba^{k+1} \: + \: P_{k} \, ba^{k} \, , \qquad  \mbox{where} \]
%\begin{math}
\begin{eqnarray}%{lcl}
 P_{k+l}  & = &
\begin{cases}
    (-1)^{k} \displaystyle \binom{k+l}{k} \, \{ {\lambda}(bu) \, - \, {\lambda}(ub) \},
                               & \text{if \, $bub \, \not \equiv \, 0$}  \\
   \displaystyle 0,  &\text{if \, $bub \, \equiv \, 0$;}
               \end{cases}  \label{k+l} \\
P_{k+l-1}  & = &  \begin{cases}
   \displaystyle (-1)^{k-1}\,\Big \{ \binom{k+l-1}{l-1}{\lambda}(uba) \, + \, \binom{k+l-1}{l}{\lambda}(abu) \Big\},
                               &\text{if \, $bub \, \not \equiv \, 0$} \\
   \displaystyle (-1)^{k-1} \binom{k+l}{k} \, {\lambda}(abu),  &\text{if \, $u \, = \, b^{m}, \, m$ odd,} \\
   \displaystyle (-1)^{k-1} \Big\{ \binom{k+l-1}{l} \, - \, \binom{k+l-1}{l-1} \Big\} \, {\lambda}(abu),
                               &\text{if \, $bub \, = \, s{\tilde{s}}, \: s \in A^{*}$;}
               \end{cases}  \label{k+l-1} 
\end{eqnarray}               
\begin{eqnarray}
P_{k+1}  & = &  \begin{cases}
   \displaystyle (-1)^{k+1} (k+1) \, {\lambda}(uba^{l-1}), \, &\text{if \, $k < l$} \\
   \displaystyle (k+1) \, \{ (-1)^{k+1} {\lambda}(uba^{k-1}) \: - \: {\lambda}(a^{k-1}bu) \},  &\text{if \, $k = l$;}
               \end{cases}  \label{k+1} \\
P_{k}  & = &  \begin{cases}
   \displaystyle (-1)^{k+1} {\lambda}(uba^{l}),  &\text{if \, $k < l$} \\
   \displaystyle (-1)^{k+1} {\lambda}(uba^{k}) \: + \: {\lambda}(a^{k}bu),  &\text{if \, $k = l$.}
               \end{cases} \label{k}
\end{eqnarray}
%\end{math}
\end{corollary}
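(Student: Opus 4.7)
The plan is to unpack the explicit recursion of Proposition~\ref{lambdapascal} and simply identify each $P_m$ by reading off the coefficient of $ba^m$. Since $k \leq l$, the first sum in Proposition~\ref{lambdapascal} contributes $(-1)^{k+1}\binom{k+i}{i}\lambda(uba^{l-i})$ to the coefficient of $ba^{k+i}$ for $0 \leq i \leq l$, while the second sum contributes $(-1)^{j}\binom{l+j}{j}\lambda(a^{k-j}bu)$ to the coefficient of $ba^{l+j}$ for $0 \leq j \leq k$. These ranges overlap exactly on $[l,\,k+l]$, so the four coefficients $P_{k+l}$, $P_{k+l-1}$, $P_{k+1}$, $P_{k}$ amount to a bookkeeping exercise, followed by a few collapses in degenerate subcases.

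For the top coefficient $P_{k+l}$ only the indices $i = l$ and $j = k$ contribute, and the symmetry $\binom{k+l}{l} = \binom{k+l}{k}$ combines both contributions into $(-1)^{k} \binom{k+l}{k}\{\lambda(bu) - \lambda(ub)\}$. The defining recursion~\eqref{defrecl*} gives $\lambda(bub) = \{\lambda(bu) - \lambda(ub)\}\,b$, so $bub \equiv 0$ if and only if $\lambda(bu) = \lambda(ub)$; this dichotomy produces exactly the two cases displayed in~\eqref{k+l}.

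For $P_{k+l-1}$, substituting $i = l-1$ and $j = k-1$ and using $(-1)^{k+1} = (-1)^{k-1}$ yields the generic formula $(-1)^{k-1}\bigl\{\binom{k+l-1}{l-1}\lambda(uba) + \binom{k+l-1}{l}\lambda(abu)\bigr\}$. To derive the two collapsed subcases I would pass through Lemma~\ref{l*reversal}: whenever $u$ is a palindrome one has $\widetilde{uba} = abu$ and hence $\lambda(abu) = (-1)^{|u|+1}\lambda(uba)$. If $u = b^{m}$ with $m$ odd, then $u$ is a palindrome of odd length, so $\lambda(uba) = \lambda(abu)$ and Pascal's identity $\binom{k+l-1}{l-1} + \binom{k+l-1}{l} = \binom{k+l}{l}$ finishes that case. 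If instead $bub = s\widetilde{s}$, writing $s = bs'$ forces $u = s'\widetilde{s'}$, a palindrome of even length, so $\lambda(uba) = -\lambda(abu)$ and the two binomials subtract, yielding the third subcase.

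The bottom coefficients $P_{k}$ and $P_{k+1}$ are read off similarly. For $m = k$ no index $j$ in the second sum satisfies $l+j = k$ when $k < l$, so only $i = 0$ in the first sum contributes; in the diagonal case $k = l$ the term $j = 0$ of the second sum adds the extra summand $\lambda(a^{k}bu)$, producing the second branch of~\eqref{k}. An analogous argument, using $i = 1$ and (in the diagonal case) $j = 1$, handles $P_{k+1}$. The main obstacle I expect is the analysis of $P_{k+l-1}$, where the hypothesis ``$bub = s\widetilde{s}$'' must be carefully translated into the palindrome-plus-parity statement about $u$ needed to control the sign produced by Lemma~\ref{l*reversal}; that sign is precisely what distinguishes the two collapsed subcases and determines whether the binomials add (Pascal) or subtract.
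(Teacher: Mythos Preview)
Your proposal is correct and follows exactly the route the paper intends: the paper's own proof is the single sentence ``It follows as an immediate consequence of Proposition~\ref{lambdapascal},'' and what you have written is precisely the bookkeeping that unpacks this---reading off the coefficients of $ba^{m}$ from the two sums, then collapsing the two degenerate subcases of $P_{k+l-1}$ via Lemma~\ref{l*reversal} and Pascal's identity. There is nothing to add.
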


\begin{proof}
It follows as an immediate consequence of Proposition~\ref{lambdapascal}.
\end{proof}

\begin{corollary} \label{ewdw}
Let $k$ and $l$ be non negative integers with $k \leq l$ and $k, l$ not both equal to zero. If $a^{k}buba^{l}$ is a word
which is not a palindrome of even length then

\begin{math}
\begin{array}{lcl}
 e(a^{k}buba^{l})  & = & \begin{cases}
   \displaystyle k + l, \, &\text{if \, $bub \, \not \equiv \, 0$} \\
   \displaystyle k + l - 1, \,  &\text{if \, $bub \, \equiv \, 0$;}
               \end{cases} \\ \\
 d(a^{k}buba^{l})  & = & \begin{cases}
   \displaystyle k, \, &\text{if \, $k < l$ \, and \, $uba^{l} \, \not \equiv \, 0$} \\
   \displaystyle k + 1, \,  &\text{if \, $k < l$ \, and \, $uba^{l} \, \equiv \, 0$} \\
   \displaystyle k, \, &\text{if \, $k = l$ \, and \, ${(-1)}^{k+1}uba^{k} + a^{k}bu \, \not \equiv \, 0$} \\
   \displaystyle \geq k + 1, \, &\text{if \, $k = l$ \, and \, ${(-1)}^{k+1}uba^{k} + a^{k}bu \, \equiv \, 0$.}
               \end{cases}
\end{array}
\end{math}
\end{corollary}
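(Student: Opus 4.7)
The plan is to extract $e(w)$ and $d(w)$ directly from Corollary~\ref{Pi}. Writing $\lambda(a^{k}buba^{l}) = \sum_{i=k}^{k+l} P_{i}\,ba^{i}$ with the explicit formulas \eqref{k+l}, \eqref{k+l-1}, \eqref{k+1}, \eqref{k} in hand, the values of $e(w)$ and $d(w)$ are determined by deciding which of these coefficients vanish under the stated hypotheses. Throughout, the characteristic-zero assumption ensures that nonzero integer scalars (in particular the binomials $\binom{k+l}{k}$ and the integer $k+1$) remain nonzero in $K$.

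For $e(w)$, the first step is to read from \eqref{k+l} that $P_{k+l} \neq 0$ if and only if $\lambda(bu) - \lambda(ub) \neq 0$; since $\lambda(bub) = (\lambda(bu) - \lambda(ub))\,b$, this is equivalent to $bub \not\equiv 0$, settling the case $bub \not\equiv 0$ with $e(w) = k+l$. When $bub \equiv 0$, I would drop to \eqref{k+l-1} and confirm $P_{k+l-1} \neq 0$ in each of its two subcases. If $u = b^{m}$ with $m$ odd, then $abu = ab^{m+1}$ is a Lyndon word, hence $\lambda(ab^{m+1}) \neq 0$, and the coefficient $(-1)^{k-1}\binom{k+l}{k}\lambda(abu)$ is nonzero. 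If instead $bub = s\tilde{s}$, then $u = \tilde{u}$ has even length; the hypothesis that $w$ is not an even palindrome rules out $k=l$, so $k < l$, and the binomial difference $\binom{k+l-1}{l} - \binom{k+l-1}{l-1}$ is nonzero (it would vanish only at $k=l$). Finally $\lambda(abu) \neq 0$: $abu$ has both letters (not a power), and if $abu$ were a palindrome then $abu = uba$ would, by the 1st Theorem of Lyndon--Sch\"utzenberger (Lemma~\ref{1thL+S}), force $u = a(ba)^{d}$ of odd length $2d+1$, contradicting $|u|$ even.

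For $d(w)$ in the case $k < l$, \eqref{k} gives $P_{k} = (-1)^{k+1}\lambda(uba^{l})$, so $d(w) = k$ precisely when $uba^{l} \not\equiv 0$. If $uba^{l} \equiv 0$, then since both letters appear in $uba^{l}$ (using $l \geq 1$) the word $uba^{l}$ must be an even palindrome, so $|u| + l + 1$ is even and hence $|u| + l$ is odd. I would then argue that $uba^{l-1} \not\equiv 0$: its odd length precludes being an even palindrome, and being a power would force $l = 1$ with $u = b^{m}$, giving $uba^{l} = b^{m+1}a$, which is not an even palindrome, a contradiction. Hence $\lambda(uba^{l-1}) \neq 0$, $P_{k+1} \neq 0$ by \eqref{k+1}, and $d(w) = k+1$. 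In the case $k = l$, the stated dichotomy is read directly from \eqref{k}: $P_{k} = (-1)^{k+1}\lambda(uba^{k}) + \lambda(a^{k}bu)$, which is nonzero exactly when $(-1)^{k+1}uba^{k} + a^{k}bu \not\equiv 0$, and otherwise $P_{k} = 0$ forces $d(w) \geq k+1$.

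The main obstacle I anticipate is the subcase $bub = s\tilde{s}$ in the analysis of $e(w)$: establishing $P_{k+l-1} \neq 0$ requires both extracting $k < l$ from the palindrome hypothesis on $w$ (so that the binomial difference does not vanish) and ruling out $abu$ being an even palindrome via the shape $u = a(ba)^{d}$ coming from Lemma~\ref{1thL+S}. The remaining verifications reduce to elementary parity checks together with the fact that Lyndon words lie in the support of the free Lie algebra.
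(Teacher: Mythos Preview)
Your proof is correct and follows the same approach as the paper, which simply records that the result ``follows directly from Corollary~\ref{Pi}.'' You have supplied exactly the case analysis that this one-line justification suppresses, using the explicit formulas \eqref{k+l}--\eqref{k} together with the Duchamp--Thibon description of $\ker\lambda$ and Lemma~\ref{1thL+S} to rule out the degenerate subcases.
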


\begin{proof}
It follows directly from Corollary~\ref{Pi}.
\end{proof}

\medskip

\begin{lemma} \label{gamma1}
Let $k \geq 1$ and $m \geq 0$. Then ${\gamma}(b{(a^{2k}b^{2})}^{m}) = 1$.
\end{lemma}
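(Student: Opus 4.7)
The plan is induction on $m$. The base case $m = 0$ is immediate, since $\lambda(b) = b$ gives $\gamma(b) = 1$. For $m \geq 1$, first reduce the problem: write $w = bub$ with $u = a^{2k}(b^2 a^{2k})^{m-1} b$ and apply Corollary~\ref{Pi} with $k = l = 0$ (equivalently the recursion \eqref{defrecl*}), obtaining $\lambda(w) = [\lambda(bu) - \lambda(ub)]\,b$. A direct inspection shows that $bu = ba^{2k}(b^2 a^{2k})^{m-1}b$ is a palindrome of even length $m(2k+2)$, so $\lambda(bu) = 0$ by Lemma~\ref{l*reversal}; and $ub = (a^{2k}b^2)^m$. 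Setting $v := (a^{2k}b^2)^m$, we obtain $\lambda(w) = -\lambda(v)\,b$, and consequently $\gamma(w) = \gamma(v)$.

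It thus remains to show $\gamma(v) = 1$, which I prove by a secondary induction on $m$. For $m = 1$ the word $v = a^{2k}b^2$ is easily checked Lyndon (its proper right factors are either powers of $b$ or $a^i b^2$ with $i < 2k$, all lex-greater), so $\gamma(v) = 1$ by \eqref{1Lynd}. For $m \geq 2$, apply Proposition~\ref{lambdapascal} to $v = a^{2k} b u' b$ with $u' = b(a^{2k}b^2)^{m-2}a^{2k}b$. A direct check shows $u'b = b(a^{2k}b^2)^{m-1} = w_{m-1}$ (the word for the previous value of $m$) and $bu' = \widetilde{w_{m-1}}$; since $|w_{m-1}|$ is odd, Lemma~\ref{l*reversal} gives $\lambda(bu') = \lambda(u'b) = \lambda(w_{m-1})$. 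Hence, in the Lemma~\ref{PdPe}-type expansion $\lambda(v) = \sum_{j=0}^{2k} P_j \cdot ba^j$, the leading coefficient $P_{2k}$ vanishes, while the next coefficient reads $P_{2k-1} = -\lambda(abu')$.

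To close, one shows that $\lambda(abu') = \lambda(ab^2(a^{2k}b^2)^{m-2} a^{2k} b)$ has gcd of coefficients equal to $1$. Iteratively unfolding via \eqref{defrecl*} and Lemma~\ref{l*reversal}, and systematically exploiting the palindromic even-length subwords that appear (most notably $(a^{2k}b^2)^{m-1}a^{2k}$, of length $m(2k+2) - 2$, and $bu' = \widetilde{w_{m-1}}$), one aims for a factorization of the form $\lambda(abu') = \pm \lambda(w_{m-1}) \cdot Q$ with $Q$ a polynomial whose monomials have unit coefficients and give pairwise distinct products (for instance $Q = ab + ba$); multiplication by such a $Q$ preserves the gcd of coefficients, so $\gamma(v) \mid \gamma(w_{m-1}) = 1$ by the primary induction hypothesis, forcing $\gamma(v) = 1$. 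The main obstacle is this iterative reduction: one has to track the chain of palindromic cancellations so that the emergent factor is precisely $\lambda(w_{m-1})$ and not an unrelated subword, and the higher $k$ cases in particular require extra care since the short cancellation $\lambda(ab^2 a^{2k-1}) = 0$ available for $k = 1$ does not persist.
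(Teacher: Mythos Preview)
Your initial reductions are correct and coincide with the paper: the base case, the identity $\lambda(w_m)=-\lambda(v_m)\,b$ via the even palindrome $bu$, and the expansion of $\lambda(v_m)=\lambda(a^{2k}bu'b)$ through Proposition~\ref{lambdapascal} with the identification $P_{2k}=0$ are all fine.

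The gap is at the end. You stake everything on $P_{2k-1}=-\lambda(abu')$ and then only \emph{sketch} a hoped-for factorization $\lambda(abu')=\pm\lambda(w_{m-1})\cdot Q$. This is not established, and in fact already fails for $m=2$ with your suggested $Q=ab+ba$: there $abu'=ab^{2}a^{2k}b$, and one computes $\lambda(ab^{2}a^{2k}b)=\lambda(ab^{2}a^{2k})\,b+\lambda(a^{2k}b^{2})\,ba$, whereas $\lambda(w_1)(ab+ba)=-\lambda(a^{2k}b^{2})\,bab-\lambda(a^{2k}b^{2})\,b^{2}a$; the ``ends in $a$'' parts are $\lambda(a^{2k}b^{2})ba$ versus $-\lambda(a^{2k}b^{2})b^{2}a$, which do not match. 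Your own closing remark about ``extra care for higher $k$'' confirms you have not actually closed the argument.

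The fix is to look at a different coefficient in the very expansion you wrote down. In your notation $P_{0}=\lambda(a^{2k}bu')=\lambda\bigl((a^{2k}b^{2})^{m-1}a^{2k}b\bigr)$; this is exactly the word the paper isolates. Passing to the reversal (odd length) and peeling off the trailing $a^{2k}$ via iterated use of \eqref{defrecl*} gives
\[
\lambda\bigl(b(a^{2k}b^{2})^{m-1}a^{2k}\bigr)=\lambda\bigl(b(a^{2k}b^{2})^{m-1}\bigr)\,a^{2k}\;-\;\sum_{i=0}^{2k-1}\lambda\bigl(a^{2k}(b^{2}a^{2k})^{m-2}b^{2}a^{2k-i}\bigr)\,ba^{i}.
\]
Since $\lambda(w_{m-1})=-\lambda((a^{2k}b^{2})^{m-1})\,b$ has all monomials ending in $b$, the first block ends in $ba^{2k}$ and cannot collide with the sum (whose terms end in $ba^{i}$, $i\le 2k-1$). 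Hence every coefficient of $\lambda(w_{m-1})$ occurs among the coefficients of $P_0$, so $\gamma(P_0)\mid\gamma(w_{m-1})=1$ by the (primary) induction hypothesis, and therefore $\gamma(v_m)=1$. This is precisely the paper's argument, obtained from your own expansion by selecting $P_0$ instead of $P_{2k-1}$.
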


\begin{proof}
We argue by induction on $m$. For $m = 0$ it is trivial to check that ${\gamma}(b) = 1$. For the induction step
we get ${\lambda}(b{(a^{2k}b^{2})}^{m+1}) = - {\lambda}({(a^{2k}b^{2})}^{m+1}) b$, since $b{(a^{2k}b^{2})}^{m}a^{2k}b \equiv 0$.
Then ${\lambda}({(a^{2k}b^{2})}^{m+1}) = {\lambda}({(a^{2k}b^{2})}^{m}a^{2k}b)b \, - \, {\lambda}({a^{2k-1}b^{2}(a^{2k}b^{2})}^{m})a$.
It is enough to show that ${\gamma}({(a^{2k}b^{2})}^{m}a^{2k}b) = 1$. By Lemma~\ref{l*reversal} and \eqref{bubal} we obtain
\begin{equation}
{\lambda} \bigl( b{(a^{2k}b^{2})}^{m}a^{2k} \bigr) \, = \, {\lambda} \bigl( b{(a^{2k}b^{2})}^{m} \bigr) a^{2k} \, - \,
\sum_{i=0}^{2k-1} {\lambda} \bigl( a^{2k}{(b^{2}a^{2k})}^{m-1}b^{2}a^{2k-i} \bigr) ba^{i}.
\end{equation}
By our induction hypothesis ${\gamma}({(b^{2}a^{2k})}^{m}b) = 1$, therefore it clearly follows that
${\gamma} \bigl( b{(a^{2k}b^{2})}^{m}a^{2k} \bigr) = 1$, as required.
\end{proof}

\begin{lemma} \label{gammaodd}
Let $m \geq 0$ and $k \geq 2$. Then ${\gamma}({(ba^{2}ba^{2k})}^{m}b)$ is odd.
\end{lemma}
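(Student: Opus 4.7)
The plan is to argue by induction on $m$, paralleling the proof of Lemma~\ref{gamma1}. The base case $m=0$ is immediate since $\lambda(b)=b$ gives $\gamma(b)=1$. For the inductive step, set $w_{m+1}=(ba^2ba^{2k})^{m+1}b$; since $w_{m+1}$ begins and ends with $b$, the recursion \eqref{defrecl*} yields
\[
\lambda(w_{m+1}) \;=\; \bigl[\lambda\bigl((ba^2ba^{2k})^{m+1}\bigr) \;-\; \lambda\bigl((a^2ba^{2k}b)^{m+1}\bigr)\bigr]\,b,
\]
and applying Lemma~\ref{l*reversal} to the second summand (whose word has even length $(2k+4)(m+1)$) rewrites this as
\[
\lambda(w_{m+1}) \;=\; \bigl[\lambda\bigl((ba^2ba^{2k})^{m+1}\bigr) \;+\; \lambda\bigl((ba^{2k}ba^2)^{m+1}\bigr)\bigr]\,b.
\]

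Both inner words fit the hypothesis of Corollary~\ref{Pi}: write $(ba^2ba^{2k})^{m+1}=b\cdot U\cdot ba^{2k}$ with $U=a^2(ba^{2k}ba^2)^m$, and similarly $(ba^{2k}ba^2)^{m+1}=b\cdot U'\cdot ba^2$ with $U'=a^{2k}(ba^2ba^{2k})^m$. A key simplification is that the middle $bUb$ (and similarly $bU'b$) is an even-length palindrome, hence in $\ker\lambda$ by Lemma~\ref{kerlambda}, so the top Pascal coefficient $P_{k_0+\ell}$ vanishes in each expansion. The next coefficient $P_{k_0+\ell-1}$ then reduces to $\pm\lambda(abU)=\pm\lambda(a\,w_m\,a^2)$ in the first summand (using the identity $abU=a\,w_m\,a^2$, which follows from $ba^2(ba^{2k}ba^2)^m=(ba^2ba^{2k})^m ba^2$) and to $\pm\lambda(a\,\widetilde{w_m}\,a^{2k})$ in the companion, so the inductive hypothesis on $\gamma(w_m)$ enters the picture directly.

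To conclude, I would isolate a single monomial in $\lambda(w_{m+1})$ whose coefficient is odd; by Proposition~\ref{gcd} this suffices to force $\gamma(w_{m+1})$ to be odd. A direct check in the base case $m=1$ shows that the coefficient of $ba^3 ba^{2k-1}b^2$ in $\lambda(w_1)$ equals $-\binom{3}{2}=-3$, arising from a single Pascal summand that is not touched by the companion $\lambda((ba^{2k}ba^2))$. The plan is to track an analogous ``leading'' monomial for general $m$ whose coefficient is $\pm 3$ times an odd coefficient of $\lambda(w_m)$ inherited from the induction hypothesis. The main obstacle is the bookkeeping of the many binomial coefficients produced by iterating Corollary~\ref{Pi}: their parities are delicate (governed by Lucas' theorem), so the candidate monomial must be chosen so that precisely one odd contribution survives modulo~$2$, with no cancellation from the reversed companion or from deeper levels of the recursion.
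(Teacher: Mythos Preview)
Your setup is sound --- the rewriting of $\lambda(w_{m+1})$ via \eqref{defrecl*} and Lemma~\ref{l*reversal} is correct, and your observation that $bUb$ is an even palindrome so that the top Pascal coefficient in Corollary~\ref{Pi} vanishes is fine. But the proof is not complete: you explicitly leave open the step that matters, namely exhibiting for general $m$ a single monomial of $\lambda(w_{m+1})$ with provably odd coefficient and showing it is not cancelled by the companion term $\lambda\bigl((ba^{2k}ba^2)^{m+1}\bigr)$ or by the lower Pascal summands $P_{2k-2},\dots,P_0$. The recursion in Proposition~\ref{lambdapascal} feeds in \emph{all} the polynomials $\lambda(Uba^{2k-i})$ and $\lambda(bU)$ simultaneously, each weighted by a binomial coefficient, so knowing merely that $\gamma(w_m)$ is odd does not by itself pin down a monomial that survives mod~$2$ after this mixing; you would need much finer control of the support of $\lambda(w_m)$ than the inductive hypothesis provides. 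As stated, the ``main obstacle'' you name is the whole proof.

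The paper avoids this bookkeeping entirely by taking a different route: instead of the Pascal recursion it uses Proposition~\ref{reclambdashuffle} (equation~\eqref{multilambdashuffle}) to collect all factors of $w_m=(ba^2ba^{2k})^m b$ of the fixed multi-degree $\bigl((m-1)(2k+2),\,2m-1\bigr)$. There is only one word of that multi-degree occurring as a factor, namely $w_{m-1}$, and the total contribution is
\[
\lambda(w_{m-1})\cdot\bigl\{a^2ba^{2k}b \;-\; a^2b\,\sh\,a^{2k}b \;+\; a^{2k}ba^2b\bigr\}.
\]
Thus the problem collapses, uniformly in $m$, to showing that the fixed polynomial $P=a^2ba^{2k}b - a^2b\,\sh\,a^{2k}b + a^{2k}ba^2b$ has some odd coefficient, which the paper checks directly (the coefficient of $a^{2k-1}ba^3$ in $\widetilde{P}\rhd b$ is $-3$). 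This factorisation $\lambda(w_m)=\lambda(w_{m-1})\cdot(\text{fixed }P)$ is exactly what makes the induction go through without any parity analysis of iterated binomials; your Pascal approach does not produce such a clean product form, which is why the obstacle you identify is genuine.
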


\begin{proof}
For $m = 0$ this holds as ${\gamma}(b) = 1$.
Let $m = 1$. Collecting all factors of $ba^{2}ba^{2k}b$ of multi-degree $(2k, \, 2)$ we observe that the only term that appears in
\eqref{multilambdashuffle} is ${\lambda}(ba^{2}ba^{2k-2}) \cdot a^{2}b$, so it suffices to show that ${\gamma}(ba^{2}ba^{2k-2})$ is odd.
If $k > 2$ then $a^{2k-2}ba^{2}b \in {\mathcal Lynd}$, hence ${\gamma}(ba^{2}ba^{2k-2}) = 1$.
On the other hand, if $k = 2$ we obtain
${\lambda}(ba^{2}ba^{2}) \; = \; - \, {\lambda}(a^{2}ba) \, \{ b \sh a \} \, + \, {\lambda}(aba^{2}) \, ab \;
= \; - \, {\lambda}(a^{2}ba) \, (2ab + ba) \; = \; -3 {\lambda}(ba^{3}) \, (2ab + ba)$, hence by Lemma~\ref{lambda1b} we
get ${\gamma}(ba^{2}ba^{2}) = 3$.

Let now $m \geq 2$. Arguing by induction on $m$, we assume that ${\gamma}({(ba^{2}ba^{2k})}^{m-1}b)$ is odd. Using \eqref{multilambdashuffle}
we collect all factors of ${(ba^{2}ba^{2k})}^{m}b$ of multi-degree $\big( (m-1)(2k+2), \, 2(m-1)+1 \bigr)$.
All such factors contribute
\begin{equation}
{\lambda}({(ba^{2}ba^{2k})}^{m-1}b) \cdot \{ a^{2}ba^{2k}b \, - \, a^{2}b \sh a^{2k}b \, + \, a^{2k}ba^{2}b \},
\end{equation}
hence we have to show that the non zero coefficients of the polynomial $P \; = \; a^{2}ba^{2k}b \, - \, a^{2}b \sh a^{2k}b \, + \, a^{2k}ba^{2}b$
can not all be even. It is enough to check this on its reversal $\widetilde{P}$ and furthermore on $\widetilde{P} \, \rhd \, b$.
By \eqref{Preversalshuffle} and Proposition~\ref{minhpetitot} we get
$\widetilde{P} \, \rhd \, b \; = \; a^{2k}ba^{2} \, - \, a^{2} \sh ba^{2k} \, - \, a^{2k} \sh ba^{2} \, + \, a^{2}ba^{2k}$.
By Lemma~\ref{abshuffle} $\big( a^{2k} \sh ba^{2}, \, a^{2k-1}ba^{3} \bigr) \, = \, \binom{2+1}{1} = 3$, an odd number, whereas
$\big( a^{2} \sh ba^{2k}, \, a^{2k-1}ba^{3} \bigr) \, = \, 0$, since $k \geq 2$.
Hence $(\widetilde{P} \, \rhd \, b, \, a^{2k-1}ba^{3}) = -3$ and the proof is completed.
\end{proof}

\bigskip

%%%%%%%%%%%%%%%%%%%%%%%%%%% aub --- avb %%%%%%%%%%%%%%%%%%%%%%%%%%%%%%%%%%%%%%%%%%%%%%%%%%%%%%%%%%%%%%%%%%%%%%%%%%%%%%%%%%%%%%%%%%%%%%%%%%%%%%%%%%%%%%
%%%%%%%%%%%%%%%%%%%%%%%%%%%%%%%%%%%%%%%%%%%%%%%%%%%%%%%%%%%%%%%%%%%%%%%%%%%%%%%%%%%%%%%%%%%%%%%%%%%%%%%%%%%%%%%%%%%%%%%%%%%%%%%%%%%%%%%%%%%%%%%%%%%%%%

\section{The case $\mathbf{aub \, \equiv \, \pm \, avb}.$}

For the plus case our objective is to show that $u = v$, whereas in the minus case we must get a contradiction. Starting from the equality ${\lambda}(aub) \: = \: \pm \, {\lambda}(avb)$ and using \eqref{defrecl*} we obtain
\begin{equation}
\bigl( {\lambda}(au) \; \mp \; {\lambda}(av) \bigr) \, b \: = \:
\bigl( {\lambda}(ub) \; \mp \; {\lambda}(vb) \bigr) \, a.
\end{equation}
It follows that
\begin{equation}
  au \: \equiv \: \pm \, av \qquad \mbox{and} \qquad ub \: \equiv \: \, \pm vb \, . \label{aubequiv}
\end{equation}
Clearly $aub \, \not \equiv \, 0$, hence we can not have $au \, \equiv \, ub \equiv \, 0$.
Set $|u| = |v| = r$.

\medskip

{$\mathbf{A. \; aub \, \equiv \, avb}$.}
Suppose that $r$ is odd. Assume that $au \, \not \equiv \, 0$.
Then $|au| = |av| = r + 1$ and $r + 1$ is even, so by \eqref{aubequiv} (with the plus sign) and our induction hypothesis
we get $au = av$, hence $u = v$ as required. If $au \, \equiv \, 0$ then we necessarily have $ub \, \not \equiv \, 0$
and reobtain $u = v$.

Let now $r$ be even. Assume that $au \, \equiv \, 0$. Then
$r + 1$ is odd, so in view of \eqref{aubequiv} our induction hypothesis yields
$au = av = a^{r+1}$, so that $u = v = a^{r}$. Similarly, if $ub \, \equiv \, 0$ we obtain $u = v = b^{r}$.

To conclude this case suppose that both $au \not \equiv 0$ and $ub \not \equiv 0$. By our induction hypothesis
we obtain
\begin{equation} \label{aubavb}
   \begin{cases}
     \, au = av & \text{or} \quad  au = \tilde{v}a \\
     \, ub = vb & \text{or} \quad ub = b{\tilde{v}} .
   \end{cases}
\end{equation}
If at least one of the equalities $au = av$ or $ub = vb$ holds we immediately get $u = v$. Therefore, the only non trivial subcase in \eqref{aubavb}
is the combination
\begin{equation}
  au \: = \:  {\tilde{v}}a \qquad \mbox{and} \qquad ub \: = \: b{\tilde{v}} \, . \label{aubtild}
\end{equation}
Then we obtain
\begin{eqnarray}
aub & = & (au)b \, = \, ({\tilde{v}}a)b \, = \, {\tilde{v}}ab \\
    & = & a(ub) \, = \, a(b{\tilde{v}}) \, = \, ab{\tilde{v}}.
\end{eqnarray}
It follows that $ab{\tilde{v}} \, = \, {\tilde{v}}ab$, i.e., the words $ab$ and ${\tilde{v}}$ commute, hence
by Lemma~\ref{2thL+S} they are both powers of the same word. Since $ab$ is primitive we get
${\tilde{v}} \, = \, {(ab)}^{n}$, hence $v \, = \, {(ba)}^{n}$ for some $n \geq 1$. In view of \eqref{aubtild} we get
$au \, = \, {(ab)}^{n}a \, = \, ab{(ab)}^{n-1}a$. It follows that $u \, = \, b{(ab)}^{n-1}a \, = \, {(ba)}^{n}$ and
therefore $u = v$, as required.

\medskip

{$\mathbf{B. \; aub \, \equiv \, - \, avb}$.}
Suppose that $r$ is even. Without loss of generality we may assume that $au \, \not \equiv 0$. Then we can not have $au \, \equiv \, - \, av$ due
to our induction hypothesis.

Suppose that $r$ is odd. If both $au \, \not \equiv \, 0$ and $ub \, \not \equiv \, 0$ then by \eqref{aubequiv} (with the minus sign) and
our induction hypothesis we reobtain \eqref{aubtild}. Following the lines of Case A we reobtain
${\tilde{v}} \, = \, {(ab)}^{n}$ for some $n \geq 1$. Then $|v| = 2n$ which contradicts our assumption that $r$ is odd.

Suppose finally that, without loss of generality, $au \, \not \equiv \, 0$ and $ub \, \equiv \, 0$. On the one hand \eqref{aubequiv} and our
induction hypothesis yield $au \, = \, {\tilde{v}}a$, and on the other both $ub$ and $vb$ are palindromes of even length.
Then there exists $s \in A^{*}$ such that $u \, = \, sa$, $v \, = \, {\tilde{s}}a$,
$sab \, = \, ba{\tilde{s}}$ and ${\tilde{s}}ab \, = \, bas$. Combining all these we obtain
\begin{equation}
sabab \, = \, (sab)ab \, = \, (ba{\tilde{s}})ab \, = \, ba({\tilde{s}}ab) \, = \, ba(bas) \, = \, babas .
\end{equation}
It follows that $(baba)s \, = \, s(abab)$, so by Lemma~\ref{1thL+S} there exist $p, q \in A^{*}$ such that
$baba \, = \, pq$, $abab \, = \, qp$ and $s \, = p{(abab)}^{n}$ for some integer $n \geq 0$. There are two possibilities
for the factor $p$; either $p = b$ or $p = bab$. In both cases $|s|$ - and consequently $|sab|$ - is an odd positive integer,
contradicting the fact that $ub$ is a palindrome of even length.

\bigskip

%%%%%%%%%%%%%%%%%%%%%%%%%%%%%%%%%%%%%%%%%%%% a^{k}buba^{l} = +- a^{k}bvba^{l} %%%%%%%%%%%%%%%%%%%%%%%%%%%%%%%%%%%%%%%%%%%%%%%%%%%%%%%%%%%%%%%%%%%%%%%%%
%%%%%%%%%%%%%%%%%%%%%%%%%%%%%%%%%%%%%%%%%%%%%%%%%%%%%%%%%%%%%%%%%%%%%%%%%%%%%%%%%%%%%%%%%%%%%%%%%%%%%%%%%%%%%%%%%%%%%%%%%%%%%%%%%%%%%%%%%%%%%%%%%%%%%%%

\section{The case $\mathbf{a^{k}buba^{l} \, \equiv \, \pm \, a^{k}bvba^{l} \, \not \equiv \, 0}.$}

Let $w_{1} \, = \, a^{k}buba^{l}$, $w_{2} \, = \, a^{k}bvba^{l}$ and $|u| = |v| = r$. Multiplying both sides of \eqref{k+l} by $b$ we obtain
\begin{equation} \label{51}
bub \, \equiv \, \pm bvb.
\end{equation}
Suppose that $k = l$. Then since $w_{1}, \, w_{2} \not \equiv \, 0$, we either get $u = v = b^{r}$ with $r$ odd,
or $bub, \, bvb \, \not \equiv \, 0$. In the former case $w_{1} \, \equiv \, w_{2}$ immediately yields $u = v$, hence $w_{1} = w_{2}$, as required.
On the other hand $w_{1} \, \equiv \, - \, w_{2}$ yields $2 \, w_{1} \, \equiv \, 0$, a contradiction.
For the latter, if $w_{1} \, \equiv \, w_{2}$ and $r$ is odd then our induction hypothesis implies $u = v$, thus $w_{1} = w_{2}$, whereas
if $r$ is even we might also get $v = \tilde{u}$, thus $w_{2} = \widetilde{w_{1}}$. On the other hand, when $w_{1} \, \equiv \, - \, w_{2}$ we necessarily get $r$ even and $v = \tilde{u}$ reobtaining $w_{2} = \widetilde{w_{1}}$, as required.

Suppose that $k \neq l$. Without loss of generality we may assume that $k < l$. Then \eqref{k} and \eqref{k+1} respectively yield
\begin{eqnarray}
uba^{l} & \equiv & \pm vba^{l} \label{52} \\
uba^{l-1} & \equiv & \pm vba^{l-1}. \label{53}
\end{eqnarray}
Suppose that $bub \, \equiv \, 0 \, \equiv \, bvb$.
If $r$ is odd then $u = v = b^{r}$, so that if $w_{1} \, \equiv \, w_{2}$ we get the trivial solution $w_{1} = w_{2}$,
whereas if $w_{1} \, \equiv \, - \, w_{2}$ we get $2 \, w_{1} \, \equiv \, 0$, a contradiction.

If $r$ is even we have $u = s{\tilde{s}}$ and $v = t{\tilde{t}}$ for $s, \, t \in A^{*}$. Consider the case $w_{1} \, \equiv \, w_{2}$.
If $l$ is odd then $|s{\tilde{s}}ba^{l}|$ is even, so, in view of our induction hypothesis, \eqref{52} (with the plus sign) yields $u = v$.
The same result follows similarly by \eqref{53} when $l$ is odd. For the case $w_{1} \, \equiv \, - \, w_{2}$ we work in the same manner.
If $l$ is odd (resp. even) then $|s{\tilde{s}}ba^{l-1}|$ (resp. $|s{\tilde{s}}ba^{l}|$) is odd, so \eqref{53} (resp. \eqref{52}) with the minus sign
and our induction hypothesis yield a contradiction.

\smallskip

Suppose that $bub \, \not \equiv \, 0$ and $bvb \, \not \equiv \, 0$. Let $w_{1} \, \equiv \, w_{2}$. If $r$ is even then \eqref{51} reads
$bub \, \equiv \, bvb$, so our induction hypothesis yields $u = v$. If $r$ is odd we might have $v = \tilde{u}$.
If $l$ is odd then $|uba^{l}|$ is also odd and \eqref{52} reads $uba^{l} \, \equiv \, \tilde{u}ba^{l}$.
Our induction hypothesis then implies that $u = \tilde{u}$ directly or it yields $a^{l}bu = uba^{l}$. In the latter case Lemma 2.1 gives
$u = a^{l}{(ba^{l})}^{d}$, for some $d \geq 0$, so that we reobtain $u = \tilde{u}$.
If $l$ is even we end up with the same result working similarly with \eqref{53}.

Finally let $w_{1} \, \equiv \, - \, w_{2}$. Then \eqref{51} reads $bub \, \equiv \, -  \, bvb$, so our induction hypothesis implies that
$r$ is necessarily even and $v = \tilde{u}$. If $l$ is even then $|uba^{l}|$ is odd, so that \eqref{52}, which reads
$uba^{l} \, \equiv \, - \, \tilde{u}ba^{l}$ and our induction hypothesis yield a contradiction.
Similarly we get a contradiction with \eqref{53} (with the minus sign) when $l$ is odd.

\bigskip

%%%%%%%%%%%%%%%%%%%%%%%%%%%%%%%%%%%%%%%%%%%%%%%%%%%%%%%%%% aub = +- ava %%%%%%%%%%%%%%%%%%%%%%%%%%%%%%%%%%%%%%%%%%%%%%%%%%%%%%%%%%%%%%%%%%%%%%%%%%%
%%%%%%%%%%%%%%%%%%%%%%%%%%%%%%%%%%%%%%%%%%%%%%%%%%%%%%%%%%%%%%%%%%%%%%%%%%%%%%%%%%%%%%%%%%%%%%%%%%%%%%%%%%%%%%%%%%%%%%%%%%%%%%%%%%%%%%%%%%%%%%%%%%%

\section{The case $\mathbf{aub \, \equiv \, \pm \, ava}.$}

By definition ${\lambda}(aub) = \pm \, {\lambda}(ava)$, hence by \eqref{defrecl*} it follows that
\begin{equation}
{\lambda}(au)b - {\lambda}(ub)a \: = \pm \, \: \bigl( {\lambda}(av)a - {\lambda}(va)a \bigr).
\end{equation}
Then we necessarily obtain ${\lambda}(au) = 0$, so either $u = a^{n}$, for some $n \geq 1$, or ${|u|}_{b} \geq 1$ and
$au$ is a palindrome of even length.

In the former case $aub = a^{n+1}b$ and $ava = a^{k}ba^{n + 1 - k}$, for some integer $k$ such that $1 \leq k \leq n$. By
Lemma~\ref{lambda1b} we get $\displaystyle {\lambda}(ava) = {(-1)}^{k} \binom{n + 1 }{k} {\lambda}(ba^{n + 1})$
and by Lemma~\ref{l*reversal} ${\lambda}(a^{n+1}b) = {(-1)}^{n + 3} {\lambda}(ba^{n + 1})$, so that we finally obtain
$\displaystyle \binom{n + 1}{k} = \pm 1$, a clear contradiction, since $k \neq 0$ and $k \neq {n + 1}$.

Let us now suppose that $au \equiv 0$ and ${|u|}_{b} \geq 1$, i.e.,
\begin{equation} \label{aubavamain}
\fbox{$a^{k}bs{\tilde{s}}ba^{k}b \, \equiv \, \pm \, a^{l}btba^{m}$} \, ,
\end{equation}
for $s, t \in A^{*}$, $k,l$ and $m$ positive integers with $l \leq m$, without loss of generality.

First consider the case ${|t|}_{a} \, = \, 0$.
%\subsection{The case $\mathbf{{|t|}_{a} \, = \, 0}.$}
Counting the number of occurrences of the letter $a$ in \eqref{aubavamain} we get $2k + 2\,{|s|}_{a} \, = \, l + m$, so that $l + m \, \geq \, 2k$.
By Corollary~\ref{ewdw}  we have $e(a^{k}bs{\tilde{s}}ba^{k}b) \, = \, k$, since $bs{\tilde{s}}ba^{k}b \, \not \equiv \, 0$ and
$e(a^{l}btba^{m}) \, = \, l \, + \, m \, - \, 1$, since $btb \, \equiv \, 0$. Then \eqref{aubavamain} yields
$k \; = \; l \, + \, m \, - \, 1$, so that $k + 1 \geq 2k$ and $k = 1$.
It follows that $l = m = 1$, $bs = b^{n}$ and $t = b^{2n+1}$, for some positive integer $n$, so that \eqref{aubavamain} becomes
\begin{equation}
ab^{2n}ab \, \equiv \, \pm \, ab^{2n + 1}a.
\end{equation}
On the one hand $\displaystyle {\lambda}(ab^{2n}ab) = - {\lambda}(b^{2n}ab)a = - {(-1)}^{2n} {{2n + 1} \choose {2n}}{\lambda}(ab^{2n + 1})a
= - (2n + 1){\lambda}(ab^{2n + 1})a$,
and on the other
${\lambda}(ab^{2n + 1}a) = \bigl( {\lambda}(ab^{2n + 1}) - {\lambda}(b^{2n + 1}a) \bigr) a =
                           2 {\lambda}(ab^{2n + 1})a$.
It follows that $2n + 1 = \pm 2$, a clear contradiction.

Therefore, we may assume that ${|t|}_{a} \, \geq \, 1$.
%\subsection{The case $\mathbf{{|t|}_{a} \, \geq \, 1}.$}
Applying Corollary~\ref{ewdw} in \eqref{aubavamain} we get $l + m = k$, since ${|t|}_{b}$ is odd. It follows that $k \geq 2$.

Our first claim is that $l = 1$. Indeed, if $l > 1$ we apply Proposition~\ref{multieqlambdashuffle} for factors of multi-degree $(1, \, 1)$ on \eqref{aubavamain}. Then working modulo $\ker \lambda$ Lemma~\ref{kerlambda} and Proposition~\ref{Ree} imply that
the only contribution will come from the left hand side and it will yield $a^{k-1}bs{\tilde{s}}ba^{k} \, \equiv \, 0$, a contradiction.

Next we claim that $k = 2$. Suppose that $k > 2$, i.e., consider the situation
\begin{equation} \label{987dr}
a^{k}bs{\tilde{s}}ba^{k-1} \, \fbox{$ab$} \, \equiv \, \pm \, \fbox{$ab$} \, tba^{k-1}.
\end{equation}
Applying Proposition~\ref{multieqlambdashuffle} for factors of multi-degree $(1, \, 1)$ again we finally obtain
\begin{equation} \label{592euros}
 - a^{k-1}bs{\tilde{s}}ba^{k} \, \equiv \, \pm tba^{k-1} .
\end{equation}
Both words in \eqref{592euros} are of odd length so the $+$ case, in view of our induction hypothesis, yields a contradiction.
For the $-$ case our induction hypothesis implies that either $tba^{k-1} = a^{k-1}bs{\tilde{s}}ba^{k}$ or $tba^{k-1} = a^{k}bs{\tilde{s}}ba^{k-1}$.
The former clearly can not hold, whereas the latter yields $t = a^{k}bs{\tilde{s}}$. Thus going back to \eqref{987dr} we must exclude the case where
\begin{equation}
a^{k}bs{\tilde{s}}ba^{k}b \, \equiv \, - \,aba^{k}bs{\tilde{s}}ba^{k-1}, \quad k > 2 \, .
\end{equation}
Observe that $a^{k}bs{\tilde{s}}ba^{k}b \, \equiv \, ba^{k}bs{\tilde{s}}ba^{k}$. We clearly have $ba^{k}bs{\tilde{s}}b \, \not \equiv \, 0$,
therefore by \eqref{k+l} we get
$\displaystyle {\lambda}(ba^{k}bs{\tilde{s}}b) = - {(-1)}^{1} {k \choose 1} {\lambda}(ba^{k}bs{\tilde{s}}b)$.
It follows that $k = 1$, which contradicts the fact that $k > 2$.

\subsection{The case $\mathbf{a^{2}bs{\tilde{s}}ba^{2}b \, \equiv \, \pm \, abtba}.$}

Suppose that $t = ayb$, for some $y \in A^{*}$ (the case $t = bya$ is dealt similarly), i.e., consider the equation
\begin{equation}
a^{2}bs{\tilde{s}}ba^{2}b \, \equiv \, \pm \, abay \, \fbox{$b^{2}a$} \, .
\end{equation}
Applying Proposition~\ref{multieqlambdashuffle} for all factors of multi-degree $(1, \, 2)$ and then working modulo $\ker \lambda$ we finally
obtain $abay \, \equiv \, 0$, which contradicts the fact that ${|y|}_{b}$ is even.

We proceed in the same manner with the case $t = byb$, i.e.,
\begin{equation} \label{joconda}
a^{2}bs{\tilde{s}}ba^{2}b \, \equiv \, \pm \, \fbox{$ab^{2}$} \, y \, \fbox{$b^{2}a$}
\end{equation}
and get $yb^{2}a + \tilde{y}b^{2}a \equiv 0$, as we have no contribution from the left hand side of \eqref{joconda}.
Both these words are of even length and since ${|y|}_{b}$ is odd in this case, they are $\not \equiv 0$.
Then our induction hypothesis yields $ab^{2}y = yb^{2}a$.
By Lemma~\ref{1thL+S} there exist $u, v \in A^{*}$ such that $ab^{2} = uv$, $b^{2}a = vu$ and $y = u{(b^{2}a)}^{n}$,
for some $n \geq 0$. It follows that $u = a$ and $v = b^{2}$, so that
$y = a{(b^{2}a)}^{n}$. But then ${|y|}_{b} = 2n$, which contradicts the fact that ${|y|}_{b}$ is odd.

We may therefore assume that
\begin{equation}
a^{2}bs{\tilde{s}}ba^{2}b \, \equiv \, \pm \, abayaba.
\end{equation}
Suppose that $s = \epsilon$, i.e., $a^{2}b^{2}a^{2}b \, \equiv \, \pm \, abababa$. Then
$b^{2}a^{2}b \, \equiv \, \mp \, 2 \, babab$. Since ${\gamma}(b^{2}a^{2}b) = 1$ by Lemma~\ref{gamma1}, this can not hold as it would
imply that $2 \, | \, 1$, a contradiction.

Suppose that $s = bx$, for some $x \in A^{*}$, i.e., $a^{2}b^{2}x{\tilde{x}}b^{2}a^{2}b \, \equiv \, \pm \, abayaba$.
It follows that $b^{2}x{\tilde{x}}b^{2}a^{2}b \, \equiv \, \mp \, 2 \, bayab$. Since ${|y|}_{b}$ is odd $aya \not \equiv 0$, so
by Corollary~\ref{ewdw} $e_{b}(bayab) = 2$. It follows that $e_{b}(b^{2}x{\tilde{x}}b^{2}a^{2}b) = 2$ and this is possible only if
$x{\tilde{x}}b^{2}a^{2} \equiv 0$, i.e., the word $x{\tilde{x}}b^{2}a^{2}$ is a palindrome of even length.
Thus $a^{2}b^{2}x{\tilde{x}} = x{\tilde{x}}b^{2}a^{2}$ and by Lemma~\ref{1thL+S} we get $x{\tilde{x}} = a^{2}{(b^{2}a^{2})}^{n}$, for some $n \geq 0$.
Then we obtain ${(b^{2}a^{2})}^{n + 2}b \, \equiv \, \mp \, 2 \, bayab$.
By Lemma~\ref{gamma1} we have ${\gamma} \bigl( {(b^{2}a^{2})}^{n + 2}b \bigr) = 1$, hence we reobtain the contradiction
$2 \, | \, 1$.

We may therefore suppose that $s = a^{k}bx$, for some $x \in A^{*}$, so it remains to check the validity of
\begin{equation} \label{aubavamain2}
\fbox{$ \, a^{2}ba^{k}bx{\tilde{x}}ba^{k}ba^{2}b \, \equiv \, \pm \, aba^{l}byba^{m}ba \, $} \, ,
\end{equation}
where $k,l,m \geq 1$ and $l \leq m$ without loss of generality.

Suppose first that ${|y|}_{a} = 0$, i.e., $byb = b^{2n + 1}$ for $n \geq 1$. We obtain
$ba^{k}bx{\tilde{x}}ba^{k}ba^{2}b \, \equiv \, \mp \, 2 \, ba^{l}b^{2n+1}a^{m}b$ and consequently
$a^{k}bx{\tilde{x}}ba^{k}ba^{2} \, \equiv \, \mp \, 2 \, a^{l}b^{2n+1}a^{m}$. By Corollary~\ref{ewdw}
$e(a^{l}b^{2n+1}a^{m}) = l + m -1$ and $e(a^{k}bx{\tilde{x}}ba^{k}ba^{2}) = k + 2$, since clearly $bx{\tilde{x}}ba^{k}b \not \equiv 0$.
It follows that $k + 2 = l + m - 1$, i.e., $l + m = k + 3$.
On the other hand, counting occurrences of the letter $a$ we obtain $2k + 2{|x|}_{a} + 2 \, = \, l + m$.
It follows that $k = 1$ and ${|x|}_{a} = 0$, so that $bx{\tilde{x}}b = b^{2n}$ and $ab^{2n}aba^{2} \, \equiv \, \mp \, 2 \, a^{l}b^{2n + 1}a^{m}$,
where $l + m = 4$ and $1 \leq l \leq m$.
Then by Corollary~\ref{Pi}, and in particular \eqref{k+l} and \eqref{k+l-1}, there exists $ q \in \mathds Z$ such that
$- \, \binom{1 + 2}{1} \, b^{2n}ab  \, \equiv \, \pm \, 2 q \, ab^{2n + 1}$. By Lemma~\ref{lambda1b} it follows that
$-3(2n + 1) ab^{2n + 1} \, \equiv \, \pm \, 2 q \, ab^{2n + 1}$ which is a contradiction since it would mean
that $2 \, | \, 3(2n + 1)$.

\medskip

Therefore we may suppose that ${|y|}_{a} \geq 1$. Since ${|y|}_{b}$ is odd Corollary~\ref{ewdw} now yields
\begin{equation} \label{eqklm}
k \, + \, 2 \: = \: l \, + \, m \, .
\end{equation}

\subsubsection{The case $\mathbf{l \, = \, 1}.$}

By \eqref{eqklm} we have $m = k + 1$, i.e., we consider the equation
\begin{equation} \label{aubavamain3}
a^{2}ba^{k}bx{\tilde{x}}ba^{k}ba^{2}b \, \equiv \, \pm \, \fbox{$abab$} \, yba^{k + 1}ba.
\end{equation}
We apply Proposition~\ref{multieqlambdashuffle} with factors $u$ of multi-degree $(2,2)$.
Since ${\lambda}(abab) = - 2 {\lambda}(a^{2}b^{2}) = {\lambda}(baba)$ the term ${\lambda}(a^{2}b^{2})$ will appear as a common factor and
therefore may be canceled. Furthermore, the only contribution modulo $\ker {\lambda}$ will come from the right hand side in \eqref{aubavamain3}
since $ba^{2}b \equiv 0$ and it will finally lead to $yba^{k + 1}ba \, \equiv \, 0$, which contradicts the fact that ${|y|}_{b}$ is odd.

\subsubsection{The case $\mathbf{l \, = \, 2 \: \, \text{and} \; \, k \, = \, 2 }.$}

By \eqref{eqklm} we also get $m = 2$, hence we are dealing with the equation
\begin{equation} \label{aubavamain4}
\fbox{$a^{2}ba^{2}$} \, bx{\tilde{x}}ba^{2}ba^{2}b \, \equiv \, \pm \, aba^{2}byba^{2}ba.
\end{equation}
We apply Proposition~\ref{multieqlambdashuffle} with factors $u$ of multi-degree $(4,1)$.
Then the term ${\lambda}(a^{4}b)$ will appear as a common factor and therefore
may be canceled. Furthermore, the only contribution modulo $\ker {\lambda}$ will come from the left hand side and will yield
$bx{\tilde{x}}ba^{2}ba^{2}b \, \equiv \, 0$, i.e., $x{\tilde{x}}ba^{2}ba^{2} \, \equiv \, 0$.
Therefore we obtain $a^{2}ba^{2}bx{\tilde{x}} \, = \, x{\tilde{x}}ba^{2}ba^{2}$, so by Lemma~\ref{1thL+S} there exist words $u, v \in A^{*}$
such that $a^{2}ba^{2}b = uv$, $ba^{2}ba^{2} = vu$ and $x{\tilde{x}} = u{(ba^{2}ba^{2})}^{n}$, for some $n \geq 0$.
Two possibilities arise; either $u = a^{2}ba^{2}$ and $v = b$ or $u = a^{2}$ and $v = ba^{2}b$. The former one
can not occur since $x{\tilde{x}}$ would be of odd length. The latter one yields
$x{\tilde{x}} = a^{2}{(ba^{2}ba^{2})}^{n}$, which is indeed a palindrome of even length.
Therefore the word on the left hand side of \eqref{aubavamain4} is in fact equal to
$a^{2}{(ba^{2})}^{2n + 4}b = {(a^{2}b)}^{2n + 5}$, where $n \geq 0$. Then \eqref{aubavamain4} may be read as
\begin{equation} \label{aubavamain5}
 \fbox{$a^{2}b$} \, {(a^{2}b)}^{2n+3} \, \fbox{$a^{2}b$} \, \equiv \, \pm \, \fbox{$aba$} \, abyba \, \fbox{$aba$} \, .
\end{equation}
Applying Proposition~\ref{multieqlambdashuffle} with factors $u$ of multi-degree $(2,1)$ we finally obtain
\begin{equation} \label{aubavamain51}
\left( {(a^{2}b)}^{2n+4} \, + \, {(ba^{2})}^{2n+4} \right) \, \equiv \, \mp \, 2 \, \left( abyba^{2}ba \, + \, ab{\tilde{y}}ba^{2}ba \right) \! .
\end{equation}
The left hand side of \eqref{aubavamain51} is equal to zero, hence it follows that $abyba^{2}ba \, \equiv \, - \, ab{\tilde{y}}ba^{2}ba$.

Suppose that $abyba^{2}ba \, \not \equiv \, 0$. Then since both words are of even length our induction
hypothesis yields $abyba^{2}ba \, = \, aba^{2}byba$, thus $yba^{2} \, = \, a^{2}by$. By Lemma~\ref{1thL+S} there exist words
$u, v \in A^{*}$ such that $a^{2}b = uv$, $ba^{2} = vu$ and $y = u{(ba^{2})}^{n}$, for some $n \geq 0$. Then
we necessarily have $u = a^{2}$ and $v = b$ so that $y = a^{2}{(ba^{2})}^{d}$, for some $d \geq 0$.
Going back to \eqref{aubavamain5} we see that $d = 2n+1$. But then $abyba^{2}ba = aba^{2}{(ba^{2})}^{2n+1}ba^{2}ba = a{(ba^{2})}^{2n+3}ba$ and
the latter is a palindrome of even length contradicting our assumption that $abyba^{2}ba \, \not \equiv \, 0$.

We are thus left with the case $abyba^{2}ba \, \equiv \, 0 \, \equiv \, ab{\tilde{y}}ba^{2}ba$. Then
both $yba^{2}$ and ${\tilde{y}}ba^{2}$ are palindromes of even length, i.e.,
$yba^{2} = a^{2}b{\tilde{y}}$ and ${\tilde{y}}ba^{2} = a^{2}by$. But then on the one hand
$a^{2}b{\tilde{y}}ba^{2} = (a^{2}b{\tilde{y}})ba^{2} = (yba^{2})ba^{2} = yba^{2}ba^{2}$
and on the other
$a^{2}b{\tilde{y}}ba^{2} = a^{2}b({\tilde{y}}ba^{2}) = a^{2}b(a^{2}by) = a^{2}ba^{2}by$.
It follows that $a^{2}ba^{2}by = yba^{2}ba^{2}$ so once more by Lemma~\ref{1thL+S} there exist $u, v \in A^{*}$ such that
$a^{2}ba^{2}b = uv$, $ba^{2}ba^{2} = vu$ and $y = u{(ba^{2}ba^{2})}^{d}$, for some $d \geq 0$. This time
the only possibility is $u = a^{2}ba^{2}$ and $v = b$ since we demand that ${|y|}_{b}$ is odd.
So we obtain $y = a^{2}ba^{2}{(ba^{2}ba^{2})}^{d}$, hence $y$ is a palindrome of odd length.
Then clearly $d = n$ and \eqref{aubavamain5} becomes ${(a^{2}b)}^{2n+5} \, \equiv \, \pm \, a{(ba^{2})}^{2n+4}ba$, which in turn yields
${(ba^{2})}^{2n+4}b \, \equiv \, \mp \, 2 \, {(ba^{2})}^{2n+4}b$. In any case it follows that
${(ba^{2})}^{2n+4}b \, \equiv \, 0$, which can not hold since the corresponding length is odd.

\subsubsection{The case $\mathbf{l \, \geq \, 2 \: \mbox{and} \: k \, > \, 2 }.$}

Here we are checking upon the equation
\begin{equation} \label{aubavamain6}
\fbox{$a^{2}ba^{k}$} \, bx{\tilde{x}}ba^{k}ba^{2}b \, \equiv \, \pm \, aba^{l}byba^{m}ba.
\end{equation}
We apply Proposition~\ref{multieqlambdashuffle} with factors $u$ of multi-degree $(k+2, \, 1)$. The term ${\lambda}(ba^{k+2})$ will appear
as a common factor and therefore may be canceled. Due to \eqref{eqklm} we get $k + 2 > l + 1$ and $k + 2 > m + 1$, hence the only contribution
modulo $\ker {\lambda}$ will come from the left hand side and will yield $bx{\tilde{x}}ba^{k}ba^{2}b \, \equiv \, 0$. It follows that
the word $x{\tilde{x}}ba^{k}ba^{2}$ is a palindrome of even length, therefore $k$ must be even and $k \geq 4$. We obtain
$a^{2}ba^{k}bx{\tilde{x}} = x{\tilde{x}}ba^{k}ba^{2}$, so by Lemma~\ref{1thL+S} there exist $u, v \in A^{*}$ such that
$a^{2}ba^{k}b = uv$, $ba^{k}ba^{2} = vu$ and $x{\tilde{x}} = u{(ba^{k}ba^{2})}^{n}$, for some $n \geq 0$.
Since ${|u|}_{b}$ must be even we obtain $u = a^{2}$, $v = ba^{k}b$ and $x{\tilde{x}} = a^{2}{(ba^{k}ba^{2})}^{n}$.
Then \eqref{aubavamain6} becomes
\begin{equation}
a^{2}{(ba^{k}ba^{2})}^{n+2}b \, \equiv \, \pm \, aba^{l}byba^{m}ba,
\end{equation}
which further yields
${(ba^{k}ba^{2})}^{n+2}b \, \equiv \, \mp \, 2 \, ba^{l}byba^{m}b$. Passing to reversals by Lemma~\ref{l*reversal} we get
\begin{equation}
{(ba^{2}ba^{k})}^{n+2}b \, \equiv \, \mp \, 2 \, ba^{m}b{\tilde{y}}ba^{l}b.
\end{equation}
It follows that ${\gamma} \big(  {(ba^{2}ba^{k})}^{n+2}b \bigr) \, = \, 2 \, {\gamma}(ba^{m}b{\tilde{y}}ba^{l}b)$.
But the latter contradicts Lemma~\ref{gammaodd} which states that ${\gamma} \bigl( {(ba^{2}ba^{k})}^{n+2}b \bigr)$ is odd.

\bigskip

%%%%%%%%%%%%%%%%%%%%%%%%%%%%%%%%%%%%%%%%%%%%%% a^{k}bx{\tilde{x}}ba^{k+1} = +- a^{k}byba^{k} %%%%%%%%%%%%%%%%%%%%%%%%%%%%%%%%%%%%%%%%%%%%%%%%%%%%%%%%%%
%%%%%%%%%%%%%%%%%%%%%%%%%%%%%%%%%%%%%%%%%%%%%%%%%%%%%%%%%%%%%%%%%%%%%%%%%%%%%%%%%%%%%%%%%%%%%%%%%%%%%%%%%%%%%%%%%%%%%%%%%%%%%%%%%%%%%%%%%%%%%%%%%%%%%%%

\section{The case $\mathbf{a^{k}bx{\tilde{x}}ba^{k+1} \, \equiv \, \pm \, a^{k}byba^{k}}.$}

Set $w_{1} = a^{k}bx{\tilde{x}}ba^{k+1}$ and $w_{2} = a^{k}byba^{k}$. We assume, for the sake of contradiction,
that $w_{1} \, \sim \, w_{2}$. Recall that this means that there exist ${\eta}_{1}, {\eta}_{2} \in {\mathds Z}^{*}$ such that
${\eta}_{1} \, w_{1} \, \equiv \, {\eta}_{2} \, w_{2}$. In certain cases we will be able to show that this is impossible; in certain
others we will only reach our original goal, i.e., that $w_{1} \, \not \equiv \pm \, w_{2}$. The reason for aiming for the
impossibility of the more difficult equation $w_{1} \, \sim \, w_{2}$, will be revealed later on in \S 8.

\smallskip

Let us first deal with the case ${|x|}_{b} = 0$. Then there exists $l \geq 0$ such that
\begin{equation}
\fbox{$a^{k}b$} \, a^{2l}ba^{k+1}  \, \equiv \, \pm \, \fbox{$a^{k}b$} \, a^{2l+1} \fbox{$ba^{k}$} \, .
\end{equation}
We apply Proposition~\ref{multieqlambdashuffle} with factors $u$ of multi-degree $(k, \, 1)$ and modulo $\ker \lambda$ we finally obtain
$a^{2l}ba^{k+1}  \, \equiv \, \pm \, 2 \, a^{2l+1}ba^{k}$.
Then by Lemma~\ref{lambda1b} we obtain $\binom{k+2l+1}{k+1} \: = \: \mp \, 2 \, \binom{k+2l+1}{k}$, which in turn
yields $2l+1 \: = \: \mp \, 2(k+1)$, a contradiction.

Note that in general the equation $a^{k}ba^{2l}ba^{k+1}  \, \sim \, a^{k}ba^{2l+1}ba^{k}$
might hold. For example if $k = l = 1$, by Proposition~\ref{lambdapascal} one can show that
${\lambda}(aba^{2}ba^{2}) \: = \: 3 \, {\lambda}(ba^{3})ba^{2} \: + \: 6 \, {\lambda}(ba^{4})ba$, whereas
${\lambda}(aba^{3}ba) \: = \: -4 \, {\lambda}(ba^{3})ba^{2} \: - \: 8 \, {\lambda}(ba^{4})ba$. It follows
that $4 \, aba^{2}ba^{2} \: \equiv \: - \, 3 \, aba^{3}ba$.

\smallskip

In the rest of the section we deal with the case ${|x|}_{b} \geq 1$.

\subsection{The case $\mathbf{y \, = \, btb}.$}

We may write $x \, = \, a^{l}bs$ for some $s \in A^{*}$ and $l \geq 0$, i.e., we are dealing with the equation
\begin{equation} \label{711}
{\eta}_{1} \, a^{k}ba^{l}bs{\tilde{s}}b \, \fbox{$a^{l}ba^{k+1}$} \, \equiv \, {\eta}_{2} \, a^{k}b^{2}tb^{2}a^{k}.
\end{equation}
Applying Proposition~\ref{multieqlambdashuffle} for factors of multi-degree $(k+l+1, \, 1)$ we finally obtain
$bs{\tilde{s}}ba^{l}ba^{k} \, \equiv \, 0$, a contradiction.

\bigskip

\subsection{The case $\mathbf{x \, = \, a^{l}bs, \quad y \, = \, a^{m}btba^{n}}.$}

We are dealing with the comparison
${\eta}_{1} \, a^{k}ba^{l}bs{\tilde{s}}ba^{l}ba^{k+1} \, \equiv \, {\eta}_{2} \, a^{k}ba^{m}btba^{n}ba^{k}$,
where $l,m,n \geq 1$ and $m \leq n$ without loss of generality.
By Proposition~\ref{multieqlambdashuffle} for factors of multi-degree $(0, \, 1)$ - i.e., factoring out the letter $b$ - we obtain
\begin{equation} \label{trio}
 {\eta}_{1} \, \left \{ \begin{array}{c}
  + \, a^{k} \: \sh \:   a^{l}bs{\tilde{s}}ba^{l}ba^{k+1}  \\
 + \, {(-1)}^{l+1} \, a^{l}ba^{k} \: \sh \: s{\tilde{s}}ba^{l}ba^{k+1} \\
            \vdots                \\
 + \, {(-1)}^{l}  \, s{\tilde{s}}ba^{l}ba^{k} \: \sh \: a^{l}ba^{k+1} \\
 - \, a^{l}bs{\tilde{s}}ba^{l}ba^{k} \: \sh \: a^{k+1}
      \end{array} \right \} \: = \: {\eta}_{2} \,
 \left \{ \begin{array}{c}
 + \, a^{k} \: \sh \: \fbox{$a^{m}btba^{n}$} \, ba^{k} \\
 + \, {(-1)}^{m+1} \, a^{m}ba^{k} \: \sh \: tba^{n}ba^{k}  \\
           \vdots   \\
 + \, {(-1)}^{n+1} \, \tilde{t}ba^{m}ba^{k} \: \sh \: a^{n}ba^{k}  \\
 + \, \fbox{$a^{n}b{\tilde{t}}ba^{m}$} \, ba^{k} \: \sh \: a^{k}
    \end{array} \right  \} .
\end{equation}
Let $u = a^{m}btba^{n}$. Since $|u|$ is odd we know that ${\lambda}(u) \, = \, {\lambda}({\tilde{u}}) \, \neq 0$,
therefore there exists a non zero integer $r$ and a Lie polynomial $P$ such that
$(P, \, u) \, = \, (P, \, {\tilde{u}}) \, = \, r$. We act by $\triangleright \, P$ on both sides of
\eqref{trio} using Proposition~\ref{minhpetitot}.
On the first and the last line of the right hand side in \eqref{trio} we obtain
\[ (a^{k} \: \sh \: uba^{k}) \, \triangleright \, P \: = \:
   (a^{k} \: \sh \: {\tilde{u}}ba^{k}) \, \triangleright \, P \: = \:
    r \, (a^{k} \: \sh \: ba^{k}). \]
The words $u$ and $a^{l}bs{\tilde{s}}ba^{l}b$ have the same length but not the same multi-degree
since ${|a^{l}bs{\tilde{s}}ba^{l}b|}_{a} \, = \, {|u|}_{a} - 1$. It follows that
$(P, \, a^{l}bs{\tilde{s}}ba^{l}b) = 0$, therefore, there is no contribution from the action
$\triangleright \, P$ to the first and the last line of the left hand side in \eqref{trio}.
Set $(P, \, s{\tilde{s}}ba^{l}ba^{l+1}) \, = \, r_{1}$, $(P, \, tba^{n}ba^{m}) \, = \, r_{2}$ and
$(P, \, {\tilde{t}}ba^{m}ba^{n}) \, = \, r_{3}$ and set $e_{i}^{\, j}$ to be equal to $1$, when $i \geq j$,
and $0$ otherwise. Then the action of $P$ on the second and on the penultimate line in both sides of \eqref{trio}
is given by the equations
\begin{equation}
\, \left \{ \begin{array}{rccl}
(a^{l}ba^{k} \: \sh \: s{\tilde{s}}ba^{l}ba^{k+1}) & \triangleright \, P & = &
 e_{k}^{\,l} \, r_{1} \, (a^{l}ba^{k} \: \sh \: a^{k-l}), \\
(s{\tilde{s}}ba^{l}ba^{k}  \: \sh \: a^{l}ba^{k}) & \triangleright \, P & = &
 e_{k}^{\,l+1} \, r_{1} \, (a^{k-l-1}  \: \sh \: a^{l}ba^{k+1}), \\
(a^{m}ba^{k} \: \sh \: tba^{n}ba^{k}) & \triangleright \, P & = &
 e_{k}^{\,m} \, r_{2} \, (a^{m}ba^{k} \: \sh \: a^{k-m}), \\
 ({\tilde{t}}ba^{m}ba^{n}  \: \sh \: a^{n}ba^{k}) & \triangleright \, P & = &
 e_{k}^{\,n} \, r_{3} \, (a^{k-n}  \: \sh \: a^{n}ba^{k}).
\end{array} \right .
\end{equation}
Collecting all terms from both sides we obtain
\begin{equation}
{\eta}_{1} \, \left \{ \begin{array}{c}
{(-1)}^{l+1}e_{k}^{\,l}r_{1} \, (a^{l}ba^{k} \: \sh \: a^{k-l}) \: + \: \\
   {(-1)}^{l}e_{k}^{\,l+1}r_{1} \, (a^{k-l-1} \: \sh \: a^{l}ba^{k+1}) \end{array} \right \} \: = \:
{\eta}_{2} \, \left \{ \begin{array}{c} 2r \, (a^{k} \: \sh \: \fbox{b} \, a^{k}) \: + \: \\
   {(-1)}^{m+1}e_{k}^{\,m}r_{2} \, (a^{m}ba^{k} \: \sh \: a^{k-m}) \: + \: \\
   {(-1)}^{n+1}e_{k}^{\,n}r_{3} \, (a^{k-n} \: \sh \: a^{n}ba^{k})
   \end{array} \right \} .  \label{tralala}
\end{equation}
Now observe that all words appearing in the shuffle products in \eqref{tralala} begin with the letter $a$,
except from the word $ba^{2k}$ in the term $(a^{k} \: \sh \: ba^{k})$. Thus if we act by $\triangleright \, b$
on both sides of \eqref{tralala} we obtain $\displaystyle 2{\eta}_{2}r \, ( a^{k} \, \sh \, a^{k}) \, = \, 0$.
Since ${\eta}_{2}, \, r \, \neq \, 0$ we get $\displaystyle \binom{2k}{k} \, = \, 0$, a contradiction.

\subsection{The case $\mathbf{ x = \, a^{l}bs, \quad y \, = \, a^{m}btb}.$}

Suppose that there exist ${\eta}_{1}, {\eta}_{2} \in {\mathds Z}^{*}$ such that
\begin{equation} \label{731}
{\eta}_{1} \, a^{k}ba^{l}bs{\tilde{s}}ba^{l}ba^{k+1} \: \equiv \: {\eta}_{2} \, a^{k}ba^{m}btb^{2}a^{k} \, .
\end{equation}
By \eqref{k+l} and \eqref{k+l-1} we obtain
\begin{equation} \label{732}
{\eta}_{1} \, {(-1)}^{k-1} \, \{ \binom{2k}{k+1} \, - \, \binom{2k}{k} \} \, {\lambda}(aba^{l}bs{\tilde{s}}ba^{l}) \, b \; = \;
{\eta}_{2} \, \binom{2k}{k} \,  {\lambda}(ba^{m}btb^{2}) .
\end{equation}
Let $P$ and $Q$ denote respectively the left and the right hand side of \eqref{732}.
Observe that $e_{b}(P) = 1$, whereas $e_{b}(Q) \geq 2$, a contradiction.

\bigskip

\subsection{The case $\mathbf{x \, = \, bs, \quad y \, = \, a^{l}btba^{m}, \quad l \, \leq \, m}. $}

We begin by examining the validity of the general equation
\begin{equation} \label{filedem}
{\eta}_{1} \, \fbox{$a^{k}b$} \, b \, \underbrace{s{\tilde{s}}b^{2}a^{k+1}} \: \equiv \: {\eta}_{2} \; a^{k}ba^{l}btba^{m}ba^{k} \, ,
\end{equation}
when ${\eta}_{1}, {\eta}_{2}$ are non zero integers.
We apply Proposition~\ref{multieqlambdashuffle} for all factors of multi-degree $(0, \, 1)$ and factoring out the letter $b$
we obtain
\begin{equation} \label{triole}
 {\eta}_{1} \, \left \{ \begin{array}{c}
  + \, a^{k} \: \sh \:   bs{\tilde{s}}b^{2}a^{k+1}  \\
  - \, \fbox{$ba^{k}$} \: \sh \: s{\tilde{s}}b^{2}a^{k+1} \\
            \vdots                \\
  + \, s{\tilde{s}}b^{2}a^{k} \: \sh \: ba^{k+1} \\
  - \, bs{\tilde{s}}b^{2}a^{k} \: \sh \: a^{k+1}
      \end{array} \right \} \: = \: {\eta}_{2} \,
 \left \{ \begin{array}{c}
 + \, a^{k} \: \sh \: a^{l}btba^{m}ba^{k} \\
 + \, {(-1)}^{l+1} \, a^{l}ba^{k} \: \sh \: tba^{m}ba^{k}  \\
           \vdots   \\
 + \, {(-1)}^{m+1} \, \tilde{t}ba^{l}ba^{k} \: \sh \: a^{m}ba^{k}  \\
 + \, a^{m}b{\tilde{t}}ba^{l}ba^{k} \: \sh \: a^{k}
    \end{array} \right  \} .
\end{equation}
We act by $[a^{k}b] \, \rhd$ on \eqref{triole} using Proposition~\ref{minhpetitot}. In view of Proposition~\ref{Ree}, working modulo
$\ker \lambda$ we finally obtain $s{\tilde{s}}b^{2}a^{k+1} \, \equiv \, 0$. Thus $k$ must be odd and $s{\tilde{s}}b^{2}a^{k+1}$ must be
a palindrome, i.e., $a^{k+1}b^{2}s{\tilde{s}} \, = \, s{\tilde{s}}b^{2}a^{k+1}$ and Lemma~\ref{1thL+S} then yields
$s{\tilde{s}} \, = \, a^{k+1}{(b^{2}a^{k+1})}^{d}$, for some $d \geq 0$.

Suppose that $l < m$. Then \eqref{filedem} becomes
\begin{equation} \label{shfhs}
{\eta}_{1} \, a^{k}b^{2}a^{k+1}{(b^{2}a^{k+1})}^{d} b^{2}a^{k+1} \: \equiv \: {\eta}_{2} \; a^{k}ba^{l}btb \, \fbox{$a^{m}ba^{k}$} \, .
\end{equation}
Applying Proposition~\ref{multieqlambdashuffle} for all factors of multi-degree $(k+m, \, 1)$ on \eqref{shfhs} and then working modulo $\ker \lambda$
with Proposition~\ref{Ree}, we finally obtain $b{\tilde{t}}ba^{l}ba^{k} \, \equiv \, 0$, which is clearly impossible.

Now suppose that $1 < l = m$. Then \eqref{filedem} is written as
\begin{equation} \label{pragmateyths}
{\eta}_{1} \, a^{k}b^{2}a^{k+1}{(b^{2}a^{k+1})}^{d}b^{2}a^{k+1} \: \equiv \:
{\eta}_{2} \, \fbox{$a^{k}ba^{l}$} \, btb \, \fbox{$a^{l}ba^{k}$} \, .
\end{equation}
Once more we apply Proposition~\ref{multieqlambdashuffle} for all factors of multi-degree $(k+l, \, 1)$ on \eqref{pragmateyths}. This time
working modulo $\ker \lambda$ we get $btba^{l}ba^{k} \, + \, b{\tilde{t}}ba^{l}ba^{k} \, \equiv \, 0$.
Set $|btba^{l}ba^{k}| = r$. Since $btba^{l}ba^{k} \, \not \equiv \, 0$ our induction hypothesis yields an immediate contradiction when $r$
is odd, whereas when $r$ is even it gives $btba^{l}ba^{k} = a^{k}ba^{l}btb$, which also clearly can not hold.

It remains to check the case $l = m = 1$. From this point and on we return to our initial objective, i.e., we consider the equation
\begin{equation}
a^{k}b^{2}a^{k+1}{(b^{2}a^{k+1})}^{d}b^{2}a^{k+1} \: \equiv \: \pm \; a^{k}babtbaba^{k} \, ,
\end{equation}
where $d \geq 0$ and $k$ is an odd positive integer.
By \eqref{k} we obtain
\begin{equation} \label{useagainin95}
{\lambda}(ba^{k+1}{(b^{2}a^{k+1})}^{d}b^{2}a^{k+1}) \: = \: \pm \; \{ \, {\lambda}(abtbaba^{k})
\: + \: {\lambda}(a^{k}babtba) \, \} \, .
\end{equation}
Using \eqref{bubal} and \eqref{k+l} to extract the largest powers of $a$ from the right we obtain
\begin{equation} \label{makealiving}
{\lambda}(ba^{k+1}{(b^{2}a^{k+1})}^{d}b^{2}) \: = \: \mp \; (k + 1) \, \{ {\lambda}(btbab) \: + \:
{\lambda}(babtb) \, \} \, .
\end{equation}
Since $ba^{k+1}{(b^{2}a^{k+1})}^{d}b^{2} = b{(a^{k+1}b^{2})}^{d+1}$ and $k$ is odd Lemma~\ref{gamma1} yields
${\gamma} \bigl( b{(a^{k+1}b^{2})}^{d+1} \bigr) = 1$. But then \eqref{makealiving} implies that
$(k + 1) \, | \, 1$, a contradiction.

\smallskip

At this point one should mention that there might exist ${\eta}_{1}$ and ${\eta}_{2} \in {\mathds Z}^{*}$ such that
\begin{equation}
{\eta}_{1} \, a^{k}b^{2}a^{k+1}{(b^{2}a^{k+1})}^{d}b^{2}a^{k+1} \: \equiv \: {\eta}_{2} \, a^{k}babtbaba^{k} \, .
\end{equation}
Indeed, for $k = 1$ and $d = 0$ one can show that $16 \, ab^{2}a^{2}b^{2}a^{2} \, \equiv \, ababababa$ using Proposition~\ref{lambdapascal}.

\bigskip

\subsection{The case $\mathbf{x \, = \, bs, \quad y \, = \, a^{l}btb}.$}

\subsubsection{The case $\mathbf{{|s|}_{a} \, = \, 0}.$}

It means that $s = b^{m}$, for some $m \geq 0$.
Counting the occurrences of the letter $a$ in this particular case we necessarily get $l = 1$, so that
$t = b^{2m}$ and we have to check the validity of the equation
\begin{equation} \label{abcd}
 {\eta}_{1} \, a^{k}b^{2m+4}a^{k+1} \: \equiv \:
 {\eta}_{2} \, a^{k}bab^{2m+3}a^{k} .
\end{equation}
By \eqref{k} we obtain ${\eta}_{1} \, b^{2m+3}a^{k+1} \: \equiv \: {\eta}_{2} \, \bigl( ab^{2m+3}a^{k} \, + \, b^{2m+2}aba^{k} \bigr)$.
%We apply Proposition~\ref{multieqlambdashuffle} for factors of multi-degree $(1, \, 2m+2)$ on \eqref{efgh}. Factoring out the term
%${\lambda}(ab^{2m+2})$ and working modulo $\ker \lambda$ we get $2 {\eta}_{2} \, ba^{k} \, \equiv \, 0$, when $k > 1$ and
%$3 {\eta}_{2} \, ba \, \equiv \, 0$. In both cases we get a contradiction since ${\eta}_{2}$ was assumed to be non zero.
Equating polynomial coefficients of $b$ and passing to reversals we obtain
\begin{equation} \label{ramnous}
{\eta}_{1} \, a^{k+1}b^{2m+2} \: \equiv \: {\eta}_{2} \, a^{k}bab^{2m+1}.
\end{equation}
Unless $k = 1$ and $m = 0$, both words in \eqref{ramnous} are distinct Lyndon words so \eqref{ramnous} contradicts Proposition~\ref{lambdaLyndon}.
If $k = 1$ and $m = 0$ the word $abab$ on the right hand side is not Lyndon. Since $abab \equiv - 2 a^{2}b^{2}$ \eqref{ramnous} in that particular
case yields ${\eta}_{1} = - 2{\eta}_{2}$, so that \eqref{abcd} becomes
\begin{equation} \label{nyxterino}
-2 \, \fbox{$ab$} \, b^{3}a^{2} \, \equiv \, \fbox{$ab$} \, ab^{2} \, \fbox{$ba$} \, .
\end{equation}
We apply Proposition~\ref{multieqlambdashuffle} for factors of multi-degree $(1, \, 1)$ on \eqref{nyxterino}. Factoring out the term
${\lambda}(ab)$ and working modulo $\ker \lambda$ we get $-2 \, b^{3}a^{2} \, \equiv \, ab^{3}a \, + \, b^{2}aba$. Factoring out the letter $a$
we get $-2 \, b^{3}a \, \equiv \, 2 \, ab^{3} \, + \, b^{2}ab$, which in turn yields $b^{2}ab \, \equiv \, 0$, a contradiction.

\subsubsection{The case $\mathbf{{|s|}_{a} \, \geq \, 1}.$}

We deal with the equation
\begin{equation} \label{innovator}
{\eta}_{1} \, a^{k}b^{2}s{\tilde{s}}b^{2}a^{k+1} \: \equiv \: {\eta}_{2} \, \fbox{$a^{k}ba^{l}$} \, btb^{2}a^{k} \, .
\end{equation}
Suppose that $l > 1$. Then applying Proposition~\ref{multieqlambdashuffle} for factors of multi-degree $(k+l, \, 1)$ on \eqref{innovator}
and working modulo $\ker \lambda$ we finally get $btb^{2}a^{k} \, \equiv \, 0$, a clear contradiction.

\smallskip

Suppose now that $l = 1$. From this point and on we check the validity of
\begin{equation} \label{halilai}
\fbox{$a^{k}b$} \, bs{\tilde{s}}b^{2}a^{k+1} \: \equiv \: \pm \; \fbox{$a^{k}b$} \, abtb \, \fbox{$ba^{k}$} \, .
\end{equation}
We apply Proposition~\ref{multieqlambdashuffle} for factors of multi-degree $(k, \, 1)$ on \eqref{halilai}. Factoring out the polynomial
${\lambda}(a^{k}b)$ and working modulo $\ker \lambda$ with Proposition~\ref{Ree} we obtain
\begin{equation} \label{arkalovolivianh}
bs{\tilde{s}}b^{2}a^{k+1} \: \equiv \: \pm \; \bigl( abtb^{2}a^{k} \, + \, b{\tilde{t}}baba^{k} \bigr) \, .
\end{equation}
From this we immediately obtain
\begin{equation} \label{anastasia}
s{\tilde{s}}b^{2}a^{k+1} \, \equiv \, \pm \, {\tilde{t}}baba^{k}.
\end{equation}
By \eqref{k+l} we also get 
\begin{equation} \label{jemaa}
bs{\tilde{s}}b^{2} \, \equiv \, \mp \, (k + 1) \, btb^{2}.
\end{equation}

$\mathbf{A. \, k}$ \textbf{even}. Both words in \eqref{anastasia} have odd length so our induction
hypothesis immediately discards the minus case. For the plus case it yields $s{\tilde{s}}b^{2}a^{k+1} \, = \, a^{k}babt$.
It is not then hard to check that $s = a^{k}babp$ and $t = p{\tilde{p}}baba^{k}b^{2}a^{k+1}$, for some $p \in A^{*}$.
Then \eqref{jemaa} becomes
$ba^{k}babp{\tilde{p}}baba^{k}b^{2} \, \sim \, bp{\tilde{p}}baba^{k}b^{2}a^{k+1}b^{2}$. By Corollary~\ref{ewdw} we get
$e_{b} \big( ba^{k}babp{\tilde{p}}baba^{k}b^{2} \bigr) = 2$, so we must also have $e_{b} \big( bp{\tilde{p}}baba^{k}b^{2}a^{k+1}b^{2} \bigr) = 2$.
This can only be achieved if $p{\tilde{p}}baba^{k}b^{2}a^{k+1} \, \equiv \, 0$.
It follows that $a^{k+1}b^{2}a^{k}babp{\tilde{p}} \, = \, p{\tilde{p}}baba^{k}b^{2}a^{k+1}$, hence by Lemma~\ref{1thL+S} the words
$a^{k+1}b^{2}a^{k}bab$ and $baba^{k}b^{2}a^{k+1}$ must be conjugate, which is easily checked to be impossible.

\smallskip

$\mathbf{B. \, k}$  \textbf{odd}. Suppose that $s{\tilde{s}}b^{2}a^{k+1}, {\tilde{t}}baba^{k} \, \not \equiv \,0$.
This time \eqref{anastasia} with the plus sign is immediately discarded by our induction hypothesis since
$s{\tilde{s}}b^{2}a^{k+1} = {\tilde{t}}baba^{k}$ clearly can not hold.
For the minus sign in \eqref{anastasia} we respectively obtain $s{\tilde{s}}b^{2}a^{k+1} \, = \, a^{k}babt$ and we argue entirely as in the case
where $k$ is even.

Finally let us consider the case where $s{\tilde{s}}b^{2}a^{k+1} \, \equiv \, 0 \, \equiv \,  {\tilde{t}}baba^{k}$.
Then $t \, = \, p{\tilde{p}}baba^{k}$, for some $p \in A^{*}$ and by Lemma~\ref{1thL+S} $s{\tilde{s}} \, = \, a^{k+1}{(b^{2}a^{k+1})}^{d}$,
for some $d \geq 0$, so that \eqref{jemaa} becomes
\begin{equation}
{\lambda}(ba^{k+1}{(b^{2}a^{k+1})}^{d}b^{2}) \: = \: \mp \; (k + 1) \, {\lambda}(bp{\tilde{p}}baba^{k}b^{2}) \, ,
\end{equation}
By Lemma~\ref{gamma1} ${\gamma} \bigl( b{(a^{k+1}b^{2})}^{d+1} \bigr) = 1$, so that we must have $(k + 1) \, | \, 1$,
a contradiction.

\medskip

The following result follows from our global analysis of \S 7.

\begin{lemma} \label{7for8}
Suppose that $a^{k}bx{\tilde{x}}ba^{k+1} \: \sim \: a^{k}byba^{k}$, with ${|x|}_{b} \geq 1$. Then $x = bs$ and $y = abp$, where $p = uba$ or
$p = vb$ for some $s, u, v \in A^{*}$.
\end{lemma}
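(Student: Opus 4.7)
The plan is to consolidate the case analyses carried out in the subsections of \S 7 into the single statement of the lemma. Matching multi-degrees in $a^{k}bx\tilde{x}ba^{k+1} \sim a^{k}byba^{k}$ forces $|y|_{b} = 2|x|_{b} \geq 2$, so we may write $y = a^{l}btba^{m}$ with $l, m \geq 0$ and $t \in A^{*}$ (where $btb$ degenerates to $b^{2}$ if $t = \epsilon$). By Lemma~\ref{l*reversal} the relation $\sim$ is preserved under replacing $y$ by $\tilde{y}$, which interchanges $l$ and $m$ and replaces $t$ by $\tilde{t}$; hence without loss of generality we may assume $l \leq m$.

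First I would rule out $x$ beginning with $a$. If $x = a^{l'}bs$ with $l' \geq 1$, then each admissible shape of $y$ is killed: $y = btb$ by \S 7.1, $y = a^{m}btb$ (or its reversal $y = btba^{m}$, via the reversal reduction) by \S 7.3, and $y = a^{l}btba^{m}$ with $l, m \geq 1$ by \S 7.2. So $x = bs$. Next, with $x = bs$: the shape $y = btb$ is excluded again by \S 7.1; for $y = a^{l}btba^{m}$ with $0 \leq l \leq m$ and $m \geq 1$, all subcases handled in \S 7.4 give contradictions except $l = m = 1$, yielding $y = abtba$ and hence $y = abp$ with $p = tba = uba$; and for $y = a^{l}btb$ with $l \geq 1$, \S 7.5.1 and \S 7.5.2 respectively eliminate the subcases $|s|_{a} = 0$ and $l > 1$, leaving $l = 1$ with $|s|_{a} \geq 1$, giving $y = abtb$ and hence $y = abp$ with $p = tb = vb$.

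In each surviving configuration $x = bs$ and $y = abp$ with $p$ of the required shape. The crucial observation, which makes the lemma a genuine statement rather than a tautology, is that the two surviving subcases (\S 7.4 with $l = m = 1$ and \S 7.5.2 with $l = 1$, $|s|_{a} \geq 1$) are precisely those in which the earlier analysis only ruled out the stricter relation $w_{1} \equiv \pm w_{2}$ and not the weaker $w_{1} \sim w_{2}$. The main bookkeeping obstacle is to confirm that every eliminated subcase of \S 7 actually discards $\sim$ in full and not just $\equiv \pm$, and that the reversal reduction via Lemma~\ref{l*reversal} correctly absorbs the boundary cases not spelled out verbatim in each subsection.
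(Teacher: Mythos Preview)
Your proposal is correct and follows the same approach as the paper: both proofs consist of scanning the case analysis of \S 7 and observing that the only subcases in which the relation $\sim$ (as opposed to merely $\equiv \pm$) is \emph{not} fully excluded are \S 7.4 with $l=m=1$ and \S 7.5 with $l=1$, which give precisely $x=bs$ and $y=abp$ with $p=uba$ or $p=vb$. Your write-up is in fact more explicit than the paper's one-line proof, since you spell out the reversal reduction $y\mapsto\tilde y$ (implicit in the paper's ``without loss of generality $l\le m$'' in \S 7.2 and \S 7.4) and you track carefully which subsections dispose of the general $\eta_1,\eta_2$ equation versus only the $\pm 1$ case.
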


\begin{proof}
The only cases where our hypotheses might hold are either when $x = bs$ and $y = abtba$ or $x = bs$ and $y = abtb$, studied in \S 7.4 and \S 7.5 respectively.
\end{proof}

\bigskip

%%%%%%%%%%%%%%%%%%%%%%%%%%%%%%%%%% a^{k}ba^{l}bx{\tilde{x}}ba^{l} = +- a^{k+1}byba^{l-1}, k+1 < l-1 %%%%%%%%%%%%%%%%%%%%%%%%%%%%%%%%%%%%%%%%%%%%%%%%%%
%%%%%%%%%%%%%%%%%%%%%%%%%%%%%%%%%%%%%%%%%%%%%%%%%%%%%%%%%%%%%%%%%%%%%%%%%%%%%%%%%%%%%%%%%%%%%%%%%%%%%%%%%%%%%%%%%%%%%%%%%%%%%%%%%%%%%%%%%%%%%%%%%%%%%%

\section{The case $\mathbf{a^{k}ba^{l}bx{\tilde{x}}ba^{l} \, \equiv \, \pm \, a^{k+1}byba^{l-1}, \:
k+1 < l-1 \, }.$}

\subsection{The case $\mathbf{{|x|}_{b} \, \geq \, 1}.$}

We necessarily have ${|y|}_{b} > 1$ and ${|y|}_{b}$ odd. By \eqref{k} we obtain
\begin{equation} \label{ybal-1}
yba^{l-1} \: \equiv \: \pm \, (k+1) \, a^{l}bx{\tilde{x}}ba^{l-1}.
\end{equation}
Suppose that $y = bt$, for some $t \in A^{+}$. Then we necessarily have $tba^{l-1} \, \equiv \, 0$, which
is contrary to the fact that ${|y|}_{b}$ is odd.

If $y = a^{m}bt$, for some $m \geq 1$ and $t \in A^{+}$ \eqref{ybal-1} becomes
\begin{equation} \label{ambtbal-1}
a^{m}btba^{l-1} \: \equiv \: \pm \, (k+1) \, a^{l}bx{\tilde{x}}ba^{l-1}.
\end{equation}

{$\mathbf{A. \; btb \, \not \equiv \, 0}$.}
By Corollary~\ref{ewdw} $e(a^{m}btba^{l-1}) = m + l - 1$ and $e(a^{l}bx{\tilde{x}}ba^{l-1}) = 2l - 2$.
Thus we get $m = l - 1$ and we have
\begin{equation}
\fbox{$ \, a^{l-1}bx{\tilde{x}}ba^{l} \: \sim \: a^{l-1}btba^{l-1} \, $} \, ,
\end{equation}
where ${|x|}_{b} \geq 1$. By Lemma~\ref{7for8} then we obtain $x = bs$ and $t = abp$, where either $p = uba$ or $p = vb$,
for some $u, v \in A^{*}$. Then we need to check if
\begin{equation}
\fbox{$a^{k}ba^{l}$} \, b^{2}s{\tilde{s}}b^{2}a^{l} \: \equiv \: \pm \;
\fbox{$a^{k+1}ba^{l-1}$} \, babpba^{l-1} \, .
\end{equation}
Applying Proposition~\ref{multieqlambdashuffle} for factors of multi-degree $(k+l, \, 1)$ we get
\begin{equation}
b^{2}s{\tilde{s}}b^{2}a^{l} \: \sim \: \fbox{$ba$} \, bpba^{l-1} \, .
\end{equation}
Applying Proposition~\ref{multieqlambdashuffle} for factors of multi-degree $(1, \, 1)$ and working modulo $\ker \lambda$ we finally obtain
$bpba^{l-1} \, \equiv \, 0$, which is a clear contradiction.

\medskip

{$\mathbf{B. \; btb \, \equiv \, 0}$.}
Going back to \eqref{ambtbal-1} we get $m = l$ by Corollary~\ref{ewdw}. Since ${|t|}_{b}$ is even we distinguish between the following two cases.

If $t = b^{2n}$, for some $n \geq 1$ then ${|x|}_{a} = 0$ and \eqref{ambtbal-1} becomes
$a^{l}b^{2n+2}a^{l-1} \: \equiv \: \pm \, (k+1) \, a^{l}b^{2n+2}a^{l-1}$, which in any case yields
$a^{l}b^{2n+2}a^{l-1} \, \equiv \, 0$, a contradiction.

Now suppose that $t = z{\tilde{z}}$ for some $z \in A^{+}$ with ${|z|}_{a} \geq 1$.
If $z = bu$, for some $u \in A^{+}$  we need to check if
\begin{equation}
a^{k}ba^{l}bx{\tilde{x}}ba^{l} \, \equiv \, \pm \, \underbrace{a^{k+1}ba^{l}b^{2}u{\tilde{u}}} \, b \, \fbox{$ba^{l-1}$} \, .
\end{equation}
We apply Proposition~\ref{multieqlambdashuffle} for all factors of multi-degree $(0, \, 1)$ and factoring out the letter $b$
we obtain
\begin{equation} \label{aggelina}
 \left \{ \begin{array}{c}
  + \, a^{k} \: \sh \:   a^{l}bx{\tilde{x}}ba^{l}  \\
  \pm \, a^{l}ba^{k} \: \sh \: x{\tilde{x}}ba^{l} \\
            \vdots                \\
  \mp \, x{\tilde{x}}ba^{l}ba^{k} \: \sh \: a^{l}
      \end{array} \right \} \: = \:  \pm \,
 \left \{ \begin{array}{c}
 - \, a^{k+1} \: \sh \: a^{l}b^{2}u{\tilde{u}}b^{2}a^{l-1} \\
 \mp \, a^{l}ba^{k+1} \: \sh \: bu{\tilde{u}}b^{2}a^{l-1}  \\
 \pm \, ba^{l}ba^{k+1} \: \sh \: u{\tilde{u}}b^{2}a^{l-1} \\
           \vdots   \\
 \mp \, u{\tilde{u}}b^{2}a^{l}ba^{k+1} \: \sh \: \fbox{$ba^{l-1}$}  \\
 \pm \, bu{\tilde{u}}b^{2}a^{l}ba^{k+1} \: \sh \: a^{l-1}
    \end{array} \right  \} .
\end{equation}
Then we act by $[a^{l-1}b] \, \rhd$ on \eqref{aggelina} using Proposition~\ref{minhpetitot}. Then working modulo $\ker \lambda$ we finally
obtain $u{\tilde{u}}b^{2}a^{l}ba^{k+1} \, \equiv \, 0$, which is a contradiction since the number of occurrences
of the letter $b$ in it is odd.

If $z = a^{n}bu$, for some $n \geq 1$ and $u \in A^{*}$ we need to check whether
\begin{equation}
a^{k}ba^{l}bx{\tilde{x}}ba^{l} \, \equiv \, \pm \, a^{k+1}ba^{l}ba^{n}bu{\tilde{u}}ba^{n}ba^{l-1} \, .
\end{equation}
It follows that $a^{l}bx{\tilde{x}} \, \sim \, a^{l}ba^{n}bu{\tilde{u}}ba^{n}$, hence we get $x = a^{n}bs$, for some
$s \in A^{*}$ so that we actually compare
\begin{equation} \label{ella}
a^{k}ba^{l}ba^{n}bs{\tilde{s}}b \, \fbox{$a^{n}ba^{l}$} \, \equiv \, \pm \,
\fbox{$a^{k+1}ba^{l}$} \, ba^{n}bu{\tilde{u}}ba^{n}ba^{l-1}.
\end{equation}
We distinguish between three cases.

(1) $n < k + 1$. Then $k + l + 1$ is strictly larger than $n + l, k + l$ and $n + l - 1$. We then apply Proposition~\ref{multieqlambdashuffle}
on \eqref{ella} for all factors of multi-degree $(k+l+1, \, 1)$ and we finally get $ba^{n}bu{\tilde{u}}ba^{n}ba^{l-1} \, \equiv \, 0$,
which is again impossible.

(2) $n > k + 1$. Then $n + l$ is strictly larger than $k + l, k + l + 1$ and $n + l - 1$. In this case we apply Proposition~\ref{multieqlambdashuffle}
for all factors of multi-degree $(n+l, \, 1)$ on \eqref{ella} and get $bs{\tilde{s}}ba^{n}ba^{l}ba^{k} \, \equiv \, 0$, another contradiction.

(3) $n = k + 1$. Proposition~\ref{multieqlambdashuffle} for factors of multi-degree $(n+l, \, 1)$ will yield
\begin{equation}
bs{\tilde{s}}ba^{k+1}ba^{l}ba^{k}  \: \equiv \: \, \pm  \, ba^{k+1}bu{\tilde{u}}ba^{k+1}ba^{l-1}.
\end{equation}
Since both words clearly lie in the support of the free Lie algebra our induction hypothesis implies that they are either equal or one is 
the reversal of the other. The former can not hold since $k < l - 1$ and the latter is clearly impossible.  

\bigskip

\subsection{The case $\mathbf{{|x|}_{b} \, = \, 0}.$}

We have to consider the equation
\begin{equation}
a^{k}ba^{l}ba^{2m}ba^{l} \: \equiv \: \pm \, a^{k+1}ba^{p}ba^{q}ba^{l-1} \, ,
\end{equation}
where $m,p,q \geq 0$ and $p + q = l + 2m$. By \eqref{k+1} and \eqref{k} we have
\begin{equation} \label{821}
(k + 1) \, a^{l}ba^{2m}ba^{l-1} \, \equiv \, \mp \,  a^{p}ba^{q}ba^{l-1}.
\end{equation}
By Corollary~\ref{ewdw} it follows that $e(a^{l}ba^{2m}ba^{l-1}) = 2l -2$ and $e(a^{p}ba^{q}ba^{l-1})$ is either equal to $p + l - 1$,
when $q$ is odd; or to $p + l - 2$, if $q$ is even. We respectively obtain $p = l - 1$ or $p = l$. The latter clearly can not hold since
\eqref{821} would yield $a^{l}ba^{2m}ba^{l-1} \, \equiv \, 0$. Therefore we are left with the case $p = l - 1$, i.e.,
\begin{equation} \label{822}
a^{k}ba^{l}ba^{2m}ba^{l} \; \equiv \; \pm \, a^{k+1}ba^{l-1}ba^{2m+1}ba^{l-1}, \quad k + 1 < l - 1.
\end{equation}
By \eqref{k+l} it follows that ${(-1)}^{k} \, \binom{k+l}{k} \, ba^{l}ba^{2m}b \, \equiv \, \pm \,
{(-1)}^{k+1} \, \binom{k+l}{k+1} \, ba^{l-1}ba^{2m+1}b$, which in turn yields
${(-1)}^{k} \, \binom{k+l}{k} \, {(-1)}^{1} \, \binom{1+1}{1} \, a^{l}ba^{2m} \, \equiv \, \pm \,
{(-1)}^{k+1} \, \binom{k+l}{k+1} \, {(-1)}^{1} \, \binom{1+1}{1} \, a^{l-1}ba^{2m+1}$. By Lemma~\eqref{lambda1b} then we finally obtain
\begin{equation} \label{823}
\binom{k+l}{k} \binom{l+2m}{l} \; = \; \pm \, \binom{k+l}{k+1} \binom{l+2m}{l-1}.
\end{equation}
The minus sign in \eqref{823} - which respectively corresponds to the minus in \eqref{822} and the plus in \eqref{821} - is clearly impossible. Calculating the binomial coefficients we are finally left with the equation
\begin{equation} \label{824}
(k + 1)(2m + 1) \; = \; l^{2},
\end{equation}
which is feasible under our assumptions, e.g., take $k = 3, l = 6$ and $m = 4$.

Equation \eqref{821} with the minus sign and for $p = l - 1$ becomes
\begin{equation}
(k + 1) \, a^{l}ba^{2m}ba^{l-1} \, \equiv \, - \,  a^{l-1}ba^{2m+1}ba^{l-1}.
\end{equation}
By \eqref{k} we obtain
${(-1)}^{l} \, (k+1) \, a^{2m}ba^{l} \; = \; - \, 2 \, {(-1)}^{l} \, a^{2m+1}ba^{l-1}$.
By Lemma~\ref{lambda1b} then it follows that
\begin{equation}
(k+1) \, \binom{l+2m}{l} \; = \; 2 \, \binom{l+2m}{l-1},
\end{equation}
which finally yields
\begin{equation} \label{825}
(k + 1)(2m + 1)\; = \; 2 \, l.
\end{equation}
From \eqref{824} and \eqref{825} we get $l^{2} = 2l$, i.e., $l = 2$, which contradicts our assumption that $k+1 < l-1$.

\bigskip

%%%%%%%%%%%%%%%%%%%%%%%%%%%%%%%%%%%%%%%%%%%%%%%%%%%%%%%%%%%%%%%%%%%%%%%%%%%%%%%%%%%%%%%%%%%%%%%%%%%%%%%%%%%%%%%%%%%%%%%%%%%%%%%%%%%%%%%%%%%%%%%%%%%%%%%%
%%%%%%%%%%%%%%%%%%%%%%%%%%%%%%%%%%%%%%%%%%%%%%%%%%%%%%%%%%%%%%%%%%%%%%%%%%%%%%%%%%%%%%%%%%%%%%%%%%%%%%%%%%%%%%%%%%%%%%%%%%%%%%%%%%%%%%%%%%%%%%%%%%%%%%%%

\section{The case $\mathbf{a^{k}ba^{k+2}bx{\tilde{x}}ba^{k+2} \, \equiv \, \pm \, a^{k+1}byba^{k+1}}.$}

\subsection{The case $\mathbf{{|x|}_{b} \, = \, 0}.$}

Then ${|y|}_{b} = 1$, i.e., we consider the comparison
\begin{equation} \label{901}
a^{k}ba^{k+2}ba^{2l}ba^{k+2} \: \equiv \: \pm \; a^{k+1}ba^{p}ba^{q}ba^{k+1} \, ,
\end{equation}
where $l,p \geq 0$ and $p + q = k + 2l + 2$.
Applying Proposition~\ref{multieqlambdashuffle} for factors of multi-degree $(0, \, 1)$ on \eqref{901} we factor out the letter $b$ and we obtain
\begin{equation} \label{902}
 \left \{ \begin{array}{c}
  + \, {(-1)}^{k} \, a^{k} \: \sh \:   a^{k+2}ba^{2l}ba^{k+2}  \\
 - \, a^{k+2}ba^{k} \: \sh \: a^{2l}ba^{k+2} \\
 + \,  a^{2l}ba^{k+2}ba^{k} \: \sh \: a^{k+2}
      \end{array} \right \} \, = \, \pm \,
 \left \{ \begin{array}{c}
 + \, {(-1)}^{k+1} \, a^{k+1} \: \sh \: a^{p}ba^{q}ba^{k+1} \\
 + \, {(-1)}^{k+p} \, \fbox{$a^{p}ba^{k+1}$} \: \sh \: a^{q}ba^{k+1}  \\
 - \, a^{q}ba^{p}ba^{k+1} \: \sh \: a^{k+1}
          \end{array} \right  \} .
\end{equation}
Suppose that $p \neq k + 1$ and $p \neq 2l + 1$. Then acting by $[a^{p+k+1}b] \, \rhd$ on \eqref{902} we finally get $a^{q}ba^{k+1} \, \equiv \, 0$,
a contradiction. Therefore we may assume, without loss of generality, that $p = k + 1$, i.e.,
\begin{equation}
a^{k}ba^{k+2}ba^{2l}ba^{k+2} \: \equiv \: \pm \; a^{k+1}ba^{k+1}ba^{2l+1}ba^{k+1} \, .
\end{equation}
By successive applications of \eqref{k+l} we obtain
\begin{equation}
 \binom{2k+2}{k} \, \binom{k+2l+2}{k+2} \: = \: \pm \; \binom{2k+2}{k+1} \, \binom{k+2l+2}{k+1}.
\end{equation}
This is clearly impossible for the minus sign. For the plus one it is equivalent to having ${(k+2)}^{2} \: = \: (k+1) \, (2l+1)$.
But this is also a contradiction because if $k$ is even (resp. odd) the left hand side is also even (resp. odd) but the right hand side
is odd (resp. even).

\medskip

\subsection{The case $\mathbf{{|x|}_{b} \, \geq \, 1}.$}

Then ${|y|}_{b} > 1$ and ${|y|}_{b}$ is odd. By \eqref{k} and \eqref{k+1} we obtain
\begin{equation} \label{903}
{\lambda}(a^{k+1}by) \: + \: {(-1)}^{k} \, {\lambda}(yba^{k+1}) \: = \:
\pm \, (k + 1) \, {\lambda}(a^{k+2}bx{\tilde{x}}ba^{k+1}).
\end{equation}
If $y = aub$ for some $u \in A^{*}$, it follows that $a^{k+1}bau \, \equiv \, 0$, which contradicts the assumption
that ${|y|}_{b}$ is odd. Similarly we deal with the case where $y = bua$.

Suppose that $y = bub$, for some $u \in A^{+}$ and let $P$ and $Q$ denote respectively the left and the right hand side in \eqref{903}.
Then $e(P) \leq k + 1$, whereas $e(Q) = 2k + 2$ and we reach a contradiction.

\smallskip

In the sequel we suppose that $y$ starts and ends with the letter $a$, i.e., $y = a^{l}btba^{m}$ for some $t \in A^{*}$ and $l,m \geq 1$,
where without loss of generality $l \leq m$.

Suppose that $x$ starts with $b$, i.e., $x = b^{n-1}z$ for $n \geq 2$ and $z \in A^{*}$. Then we have
\begin{equation} \label{bladi}
a^{k}ba^{k+2}b^{n}z{\tilde{z}}b^{n}a^{k+2} \, \equiv \, \pm \, a^{k+1}ba^{l}btba^{m}ba^{k+1}.
\end{equation}
Observe that $ba^{k+2}b^{n}z{\tilde{z}}b^{n} \, \sim \, ba^{l}btba^{m}b$. Since ${|t|}_{b}$ is odd
$a^{l}btba^{m} \, \not \equiv \, 0$, therefore by Corollary~\ref{ewdw} we get $e_{b}(ba^{l}btba^{m}b) \, = \, 2$.
On the other hand, $e_{b} \bigl( ba^{k+2}b^{n}z{\tilde{z}}b^{n} \bigr) \geq n$; the equality holds in the case where 
$a^{k+2}b^{n}z{\tilde{z}} \, \equiv \, 0$.
It follows that $n = 2$, $k$ is even and $a^{k+2}b^{2}z{\tilde{z}} = z{\tilde{z}}b^{2}a^{k+2}$. By Lemma~\ref{1thL+S} we then obtain
$z{\tilde{z}} = a^{k+2}{(b^{2}a^{k+2})}^{d}$, for some $d \geq 0$, so that \eqref{bladi} becomes
\begin{equation}
a^{k}ba^{k+2}{(b^{2}a^{k+2})}^{d+1}b^{2}a^{k+2} \, \equiv \, \pm \, a^{k+1}ba^{l}btba^{m}ba^{k+1}.
\end{equation}
By \eqref{k+l} we get
\begin{equation}
\binom{2k + 2}{k} \, ba^{k+2}{(b^{2}a^{k+2})}^{d+1}b^{2} \, \equiv \, \mp \, \binom{2k + 2}{k + 1} \, ba^{l}btba^{m}b,
\end{equation}
It follows that
\begin{equation}
(k + 1) \, {\gamma} \bigl( b{(a^{k+2}b^{2})}^{d+2} \bigr) \; = \; (k + 2) \, {\gamma}(ba^{l}btba^{m}b).
\end{equation}
Since $k$ is even Lemma~\ref{gamma1} yields ${\gamma} \bigl( b{(a^{k+2}b^{2})}^{d+2} \bigr) = 1$. But then $(k + 2) \, | \, (k + 1)$,
a contradiction.

\bigskip

We are therefore left with the case where $x = a^{n}bs$, for some $n \geq 1$, i.e.,
\begin{equation} \label{904}
\fbox{$ \, a^{k}ba^{k+2}ba^{n}bs{\tilde{s}}ba^{n}ba^{k+2} \, \equiv \, \pm \, a^{k+1}ba^{l}btba^{m}ba^{k+1} \, $} \, ,
\end{equation}
where $l \leq m$, without loss of generality. Since ${|t|}_{b}$ is odd $a^{m}btba^{n} \, \not \equiv \, 0$. Then \eqref{904} implies that
\begin{equation} \label{kalbi}
a^{k+2}ba^{n}bs{\tilde{s}}ba^{n} \, \sim \, a^{l}btba^{m} \, .
\end{equation}
Clearly $e \bigl( a^{k+2}ba^{n}bs{\tilde{s}}ba^{n} \bigr) = k + n + 2$. We have to distinguish between two cases.

\smallskip

{$\mathbf{A. \; {|t|}_{a} \, = \, 0}$.}
Then $e(a^{l}btba^{m}) = l + m -1$, so that $k + n + 2 = l + m - 1$. On the other hand,
counting occurrences of the letter $a$ in \eqref{kalbi} we get $k + 2n + 2 + 2{|s|}_{a} = l + m$.
It follows that $n + 2{|s|}_{a} = 1$ which clearly implies that $n = 1$ and ${|s|}_{a} = 0$. Then \eqref{kalbi} reads
\begin{equation} \label{eladia}
\fbox{$ab$} \, b^{2q-1}aba^{k+2} \, \sim \, a^{l}b^{2q + 1}a^{m} \, ,
\end{equation}
for some $q \geq 1$ (for $q = 0$ we fall back to the case where ${|x|}_{b} = 0$).
If $l > 1$, then applying Proposition~\ref{multieqlambdashuffle} for factors of multi-degree $(1, \, 1)$ we finally get
$b^{2q-1}aba^{k+2} \, \equiv \, 0$, a contradiction. Thus we must have $l = 1$, i.e.,
\begin{equation}
\fbox{$ab$} \, b^{2q-1}aba^{k+2} \, \sim \, \fbox{$ab$} \, b^{2q}a^{k+3} \, .
\end{equation}
Factoring out ${\lambda}(ab)$ once more by Proposition~\ref{multieqlambdashuffle} we get $a^{k+2}bab^{2q-1} \, \sim \, a^{k+3}b^{2q}$.
But the latter is a contradiction since $a^{k+2}bab^{2q-1}$ and $a^{k+3}b^{2q}$ are distinct Lyndon words and due to
Proposition~\ref{lambdaLyndon} the polynomials ${\lambda}(a^{k+2}bab^{2q-1})$ and ${\lambda}(a^{k+3}b^{2q})$ are linearly independent in
$K \langle A \rangle$.

\smallskip

{$\mathbf{B. \; {|t|}_{a} \, \geq \, 1}$.}
In that case \eqref{kalbi} yields
\begin{equation} \label{905}
k + n + 2 \, = \, l + m.
\end{equation}
Equation \eqref{903} then becomes
\begin{equation} \label{906}
{\lambda}(a^{k+1}ba^{l}btba^{m}) \: + \: {(-1)}^{k} \, {\lambda}(a^{l}btba^{m}ba^{k+1}) \: = \:
\pm \, (k + 1) \, {\lambda}(a^{k+2}ba^{n}bs{\tilde{s}}ba^{n}ba^{k+1}).
\end{equation}
Let $P$ and $Q$ denote respectively the left and the right hand side of \eqref{906}. By Corollary~\ref{ewdw} it is clear that
$e(Q) = 2k + 2$ and $e(P) \leq k + m + 1$. It follows that $2k + 2 \leq k + m + 1$, i.e., $k + 1 \leq m$.

\begin{lemma} \label{klm}
With $k, l, m, n$ as above we claim that $l = m = k+1$ and $n = k$.
\end{lemma}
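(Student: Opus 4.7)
\textit{Plan.} The bound $k+1 \leq m$ has already been established immediately before the claim, and equation~\eqref{905} asserts $k+n+2 = l+m$, so it is enough to prove the two reverse inequalities $m \leq k+1$ and $l \geq k+1$; from these $l = m = k+1$ and $n = k$ follow at once. My plan is to refine the $e$- and $d$-invariant comparisons in equation~\eqref{906}, $P = \pm(k+1)Q$, using Corollary~\ref{ewdw} and Corollary~\ref{Pi}.

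\textbf{Step 1 ($m = k+1$).} Assume for contradiction that $m \geq k+2$. Writing the first summand of $P$ in the form $a^{k+1}bu_cba^m$ with $u_c = a^lbt$, Corollary~\ref{ewdw} gives $e\bigl({\lambda}(a^{k+1}ba^lbtba^m)\bigr) = k+1+m$ unless $bu_cb = ba^lbtb$ is a palindrome of even length. The second summand has $e \leq \max(l,k+1)+k+1 \leq k+m+1$, so the coefficient of $ba^{k+m+1}$ in $P$ comes solely from the first summand. By the $P_{k+l}$-formula of Corollary~\ref{Pi} this coefficient is a non-zero multiple of ${\lambda}(bu_c) - {\lambda}(u_cb)$, and since $e(P)=2k+2<k+m+1$ it must vanish, forcing $ba^lbtb$ to be a palindrome of even length. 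By Lemma~\ref{1thL+S} this rigidly determines $t$ and $l$; in that degenerate subcase I would then examine the next coefficient $P_{k+l-1}$ of Corollary~\ref{Pi}, or equivalently apply Proposition~\ref{multieqlambdashuffle} to \eqref{904} with factors of multi-degree $(k+2,1)$, factoring out ${\lambda}(ba^{k+2})$ through Lemma~\ref{lambda1b}. Combining this with Proposition~\ref{Ree} and the odd-$\gamma$ result Lemma~\ref{gammaodd} just as in \S 6, the reduced equation cannot be satisfied, giving the required contradiction.

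\textbf{Step 2 ($l = k+1$).} With $m = k+1$, suppose $l \leq k$. Then the second summand of $P$ has $e \leq l+k+1 \leq 2k+1 < 2k+2 = e(Q)$, so the entire $ba^{2k+2}$-part of $P$ comes from the first summand. By Corollary~\ref{Pi} this coefficient is proportional to ${\lambda}(ba^lbt) - {\lambda}(a^lbtb)$, whereas the $ba^{2k+2}$-coefficient of $Q$ is proportional to ${\lambda}(bu_Q) - {\lambda}(u_Qb)$ with $u_Q = a^nbs\tilde{s}ba^n$, a palindrome by direct verification. Matching the two coefficients via the ratio $\binom{2k+2}{k+1}/\binom{2k+2}{k} = (k+2)/(k+1)$ produces an equation on strictly shorter words to which the inductive hypothesis of Theorem~\ref{theo:2} applies; the resulting equality or reversal relation, combined with Lemmas~\ref{1thL+S} and~\ref{l*reversal}, contradicts $l \leq k$ since the letter counts on the two sides do not match. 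Hence $l = k+1$, and then $n = k$ follows from~\eqref{905}.

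\textbf{Main obstacle.} The recurring difficulty is the palindromic degenerate subcases in which $bu_cb \equiv 0$ or $u_c ba^{l_c} \equiv 0$, as these drop $e$ (or raise $d$) by one and blur the comparison. Each such degeneracy must first be unwound via Lemma~\ref{1thL+S} into a rigid structural identity on $t$, $s$, $l$, $m$, $n$, and then ruled out using Lemmas~\ref{gamma1} and~\ref{gammaodd} on $\gamma$-values together with Propositions~\ref{Ree} and~\ref{multieqlambdashuffle} --- the same mixture of combinatorial tools that has driven the case analyses of Sections~6--9.
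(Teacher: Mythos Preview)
Your proposal is a plan rather than a proof, and it has a concrete gap in Step~1. You assert that when $m\geq k+2$ the $ba^{\,k+m+1}$-coefficient of $P$ in \eqref{906} comes solely from the first summand, bounding the second summand by $e\leq \max(l,k+1)+k+1$. But the correct bound from Corollary~\ref{ewdw} is simply $e\bigl(\lambda(a^{l}btba^{m}ba^{k+1})\bigr)\leq l+k+1$, and since at this stage you only know $l\leq m$, the case $l=m\geq k+2$ makes this equal to $k+m+1$: both summands can contribute at the top level and your separation breaks down. The subsequent degenerate analysis (``I would then examine the next coefficient \ldots combining this with Proposition~\ref{Ree} and Lemma~\ref{gammaodd}'') is not carried out; you are pointing at tools without showing they close the case.

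The paper avoids this trap by reversing your order. It first pins down $l$: applying Proposition~\ref{multieqlambdashuffle} to the \emph{original} equation \eqref{904} with factors of multi-degree $(k+n+2,1)$ immediately rules out $k+1<l$ (here $k+n+2=l+m$ by \eqref{905} is strictly the largest block exponent available, so only one non-proper shuffle survives modulo $\ker\lambda$). Then, for $l<k+1$, it compares $d$-invariants in \eqref{906}: the second summand has $d=l$ because $|t|_{b}$ odd forces $tba^{m}ba^{k+1}\not\equiv 0$, while $d(Q)=k+1$, a direct contradiction. Only after $l=k+1$ is fixed does the paper attack $m$ (equivalently $n$), and the hardest residual subcase $n=k+1$ is closed by a second application of Proposition~\ref{multieqlambdashuffle} with multi-degree $(2k+3,1)$ and the inductive hypothesis of Theorem~\ref{theo:2} on the resulting explicit word equation \eqref{910}. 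Your Step~2 gestures at this induction but never writes down the equation to which it is applied; the paper's argument shows that this step is not a formality and requires identifying the precise words involved.
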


\begin{proof}
First we show that $l = k + 1$. Suppose that $k + 1 < l$. Using \eqref{905} it is easy then to check that $k + n + 2$ is strictly larger than
$2k + 2$, $k + l + 1$ and $k + m + 1$. This permits us to apply Proposition~\ref{multieqlambdashuffle} on \eqref{904} for factors of multi-degree
$(k + n + 2, \, 1)$, and finally obtain the contradiction $bs{\tilde{s}}ba^{n}ba^{k+2}ba^{k} \, \equiv \, 0$. Therefore we necessarily have
\begin{equation} \label{907}
l \, \leq \, k + 1 \, \leq \, m.
\end{equation}
Now suppose that $l < k + 1$. Then clearly $l < m$ and since ${|t|}_{b}$ is odd we get
$tba^{m}ba^{k+1} \, \not \equiv \, 0$ and therefore $d(P) = l$ in \eqref{906}. On the other hand, $d(Q) = k + 1$, so we obtain $k + 1 = l$, a contradiction.

Now we are ready to show that $m = k + 1$. Since $l = k + 1$ \eqref{905} yields $m = n + 1$, so we must show that $k = n$.
Equation \eqref{906} then reads
\begin{equation} \label{908}
{\lambda}(a^{k+1}ba^{k+1}btba^{n+1}) \: + \: {(-1)}^{k} \, {\lambda}(a^{k+1}btba^{n+1}ba^{k+1}) \: = \:
\pm \, (k + 1) \, {\lambda}(a^{k+2}ba^{n}bs{\tilde{s}}ba^{n}ba^{k+1}).
\end{equation}
Suppose, for the sake of contradiction, that $k < n$ - it is enough to discard this case in view of \eqref{907}.
Then we would necessarily have $a^{k+1}bt \, \equiv \, 0$. Indeed,
if not, Corollary~\ref{ewdw} would yield $e(P) = k + n + 2$. On the other hand $2k + 2 = e(Q)$, so we would get $k = n$, a contradiction.
Thus we must have $t = p{\tilde{p}}ba^{k+1}$ for some $p \in A^{*}$ and $e \bigl( a^{k+1}ba^{k+1}bp{\tilde{p}}ba^{k+1}ba^{n+1} \bigr) = k + n + 1$.

If $n > k + 1$ then since $e \bigl( a^{k+1}bp{\tilde{p}}ba^{k+1}ba^{n+1}ba^{k+1} \bigr) \leq 2k + 2$, we obtain $e(P) = k + n + 1$. This must also
be equal to $e(Q) = 2k + 2$, so that $n = k + 1$, a contradiction.

Therefore we may assume that $n = k + 1$, so that \eqref{904} becomes
\begin{equation} \label{909}
a^{k}ba^{k+2}ba^{k+1}bs{\tilde{s}}b \, \fbox{$a^{k+1}ba^{k+2}$} \, \equiv \:
\pm \, a^{k+1}ba^{k+1}bp{\tilde{p}}ba^{k+1}b \, \fbox{$a^{k+2}ba^{k+1}$} \, .
\end{equation}
We apply Proposition~\ref{multieqlambdashuffle} for factors of multi-degree $(2k + 3, \, 1)$ on \eqref{909} and finally obtain
\begin{equation} \label{910}
bs{\tilde{s}}ba^{k+1}ba^{k+2}ba^{k} \, \equiv \, \mp \, ba^{k+1}bp{\tilde{p}}ba^{k+1}ba^{k+1},
\end{equation}
since $a^{k+1}ba^{k+2} \, \equiv \, - \, a^{k+2}ba^{k+1}$.
Clearly both words in \eqref{910} lie in the support of the free Lie algebra. Then our induction hypothesis yields an immediate contradiction
or it implies either
$bs{\tilde{s}}ba^{k+1}ba^{k+2}ba^{k} = ba^{k+1}bp{\tilde{p}}ba^{k+1}ba^{k+1}$, or
$bs{\tilde{s}}ba^{k+1}ba^{k+2}ba^{k} = a^{k+1}ba^{k+1}p{\tilde{p}}ba^{k+1}b$. It is evident that both equalities can not hold.
\end{proof}

\medskip

In view of Lemma~\ref{klm} equation \eqref{904} finally becomes
\begin{equation} \label{911}
a^{k}ba^{k+2}ba^{k}bs{\tilde{s}}ba^{k}ba^{k+2} \: \equiv \: \pm \, a^{k+1}ba^{k+1}btba^{k+1}ba^{k+1}.
\end{equation}
We apply Proposition~\ref{multieqlambdashuffle} for factors of multi-degree $(0, \, 1)$ on \eqref{911} and obtain
\begin{equation} \label{912}
\!\!\!\! \left \{ \begin{array}{c} \!\!\!
  +  a^{k} \: \sh \:   {\fbox{$a^{k+2}ba^{k}bs{\tilde{s}}ba^{k}$}} \, ba^{k+2}  \\
  +  {(-1)}^{k+1} \, a^{k+2}ba^{k} \: \sh \: {\fbox{$a^{k}bs{\tilde{s}}ba^{k}ba^{k+2}$}}  \\
               \vdots                \\
  -  {\fbox{$s{\tilde{s}}ba^{k}ba^{k+2}ba^{k}$}} \: \sh \: a^{k}ba^{k+2} \\
  +  {(-1)}^{k} \, {\fbox{$a^{k}bs{\tilde{s}}ba^{k}ba^{k+2}$}} \, ba^{k} \: \sh \:  a^{k+2}
      \end{array} \right \} = \pm
 \left \{ \begin{array}{c} \!\!\!
 -  a^{k+1} \: \sh \: {\fbox{$a^{k+1}btba^{k+1}$}} \, ba^{k+1} \\
 +  {(-1)}^{k+1} \, a^{k+1}ba^{k+1} \: \sh \: {\fbox{$tba^{k+1}ba^{k+1}$}} \\
           \vdots   \\
 -  {\fbox{${\tilde{t}}ba^{k+1}ba^{k+1}$}}  \: \sh \: a^{k+1}ba^{k+1} \\
 +  {(-1)}^{k+1} \, {\fbox{$a^{k+1}b{\tilde{t}}ba^{k+1}$}} \, ba^{k+1}  \: \sh \: a^{k+1}
    \end{array} \right  \} .
\end{equation}
For the moment, let $P$ be an arbitrary fixed Lie polynomial.
Consider the integer coefficients of the words in $P$ that are marked in \eqref{912} by boxes. More precisely let
${\eta}_{1} \, = \, (P, \, a^{k}bs{\tilde{s}}ba^{k}ba^{k+2})$,
$p \, = \, (P, \, s{\tilde{s}}ba^{k}ba^{k+2}ba^{k})$,
${\eta}_{2} \, = \, (P, \, a^{k+1}btba^{k+1})$, $q \, = \, (P, \, tba^{k+1}ba^{k+1})$ and
$r \, = \, (P, \, {\tilde{t}}ba^{k+1}ba^{k+1})$.
When $k$ is even (resp. odd) the common length of these words is odd (resp. even), therefore we get
$(P, a^{k+2}ba^{k}bs{\tilde{s}}ba^{k}) \, = \, {(-1)}^{k} \, {\eta}_{1}$ and
$(P, a^{k+1}b{\tilde{t}}ba^{k+1}) \, = \, {(-1)}^{k} \, {\eta}_{2}$.
If we act by $P \, \rhd$ on \eqref{912} then by Proposition~\ref{minhpetitot} we obtain
\begin{equation} \label{913}
 \left \{ \begin{array}{c}
  {(-1)}^{k}{\eta}_{1}\bigl[ a^{k} \sh  ba^{k+2} \, + \, a^{k+2}  \sh  ba^{k} \bigr] \\
  + \, {(-1)}^{k+1}{\eta}_{1}(a^{k+2}ba^{k}) \, - \, p(a^{k}ba^{k+2}) \\
          \end{array} \right \}  =  \pm
 \left \{ \begin{array}{c}
 - 2{\eta}_{2} (a^{k+1}  \sh  ba^{k+1}) \, + \\
 {(-1)}^{k+1}q(a^{k+1}ba^{k+1}) \, - \, r(a^{k+1}ba^{k+1})
    \end{array} \right  \} .
\end{equation}
Using Lemma~\ref{abshuffle} we equate the coefficients of $ba^{2k+2}$ in \eqref{913} and obtain the equation
\begin{equation} \label{914}
2 {(-1)}^{k} \, {\eta}_{1} \, \binom{2k+2}{k} \: = \: \mp \, 2 {\eta}_{2} \, \binom{2k+2}{k+1}.
\end{equation}
Since $a^{k}bs{\tilde{s}}ba^{k}ba^{k+2} \, \not \equiv \, 0$ there exists a Lie polynomial $P$ such that
${\eta}_{1} \, \neq 0$. Consider the coefficient $(P, a^{k+1}btba^{k+1})$ of this particular polynomial.
If $(P, a^{k+1}btba^{k+1}) = 0$, i.e., ${\eta}_{2} = 0$, then \eqref{914} yields a contradiction. Thus we may assume
that ${\eta}_{2} \neq 0$.

Suppose that $k \geq 3$.
Collecting all coefficients of $a^{2}ba^{2k}$ in \eqref{913} by Lemma~\ref{abshuffle} we obtain
\begin{equation} \label{915}
{(-1)}^{k} \, {\eta}_{1} \, \Big\{ \binom{2k}{k-2} \, + \, \binom{2k}{k} \Big\} \: = \: \mp \, 2{\eta}_{2} \, \binom{2k}{k-1}.
\end{equation}
Having assured that ${\eta}_{1}, {\eta}_{2} \, \neq 0$ we may divide \eqref{914} and \eqref{915} by parts and we obtain
\begin{equation} \label{916}
\frac{\displaystyle \binom{2k+2}{k}}{ \displaystyle \binom{2k}{k-2} \, + \, \binom{2k}{k}} \: = \:
\frac{\displaystyle \binom{2k+2}{k+1}}{ \displaystyle 2 \, \binom{2k}{k-1}}.
\end{equation}
It is easy to check, using the factorial definition of binomial coefficients, that \eqref{916} yields the absurd
$2k(k+1) \, = \, (k-1)k + (k+1)(k+2)$.

\smallskip

Suppose that $k = 1$. Our objective is to show that $s{\tilde{s}}baba^{3}ba \, \equiv \, 0$, which clearly
leads to a contradiction. To do this we need to show that $(P, \, s{\tilde{s}}baba^{3}ba) \, = \, 0$, for any Lie polynomial $P$,
which in this case is equivalent to showing that $p \, = \, 0$. By \eqref{914} we get
\begin{equation} \label{917}
- \, 4 \, {\eta}_{1} \, = \, \mp \, 6 \, {\eta}_{2}.
\end{equation}
Going back to \eqref{913} and collecting the coefficients of $aba^{3}$ by Lemma~\ref{abshuffle} we get
\begin{equation} \label{918}
- \, 4 \, {\eta}_{1} \, - \, p \, = \, \mp \, 6 \, {\eta}_{2}.
\end{equation}
By \eqref{917} and \eqref{918} it follows that $p \, = \, 0$, as required.

\smallskip

Let us finally deal with the case $k = 2$. Using Lemma~\ref{abshuffle} we collect all coefficients of the term $a^{2}ba^{4}$ from the shuffle
products $a^{2} \, \sh \, ba^{4}$,  $a^{3} \, \sh \, ba^{3}$  and $a^{4} \, \sh \, ba^{2}$ and we get
\begin{equation} \label{919}
7 \, {\eta}_{1} \, - \, p \: = \: \mp \, 8 \, {\eta}_{2}.
\end{equation}
On the other hand, by \eqref{915} we have $15 \, {\eta}_{1} \, = \, \mp \, 20 \, {\eta}_{2}$. Then from \eqref{919} it follows that
$p \, = \, {\eta}_{1}$. This implies that for each Lie polynomial $P$ we have
$(P, \, s{\tilde{s}}ba^{2}ba^{4}ba^{2}) \, = \, (P, \, a^{4}ba^{2}bs{\tilde{s}}ba^{2})$, i.e.,
\begin{equation}
s{\tilde{s}}ba^{2}ba^{4}ba^{2} \, \equiv \, a^{4}ba^{2}bs{\tilde{s}}ba^{2}.
\end{equation}
Since both words have odd length our induction hypothesis implies that either
$s{\tilde{s}}ba^{2}ba^{4}ba^{2} \, = \, a^{2}bs{\tilde{s}}ba^{2}ba^{4}$, which clearly can not hold, or
$s{\tilde{s}}ba^{2}ba^{4}ba^{2} \, = \, a^{4}ba^{2}bs{\tilde{s}}ba^{2}$. The latter yields
$a^{4}ba^{2}bs{\tilde{s}} \, = \, s{\tilde{s}}ba^{2}ba^{4}$. By Lemma~\ref{1thL+S} then there exist $u, v \in A^{*}$ such that
$a^{4}ba^{2}b \, = \, uv$, $ba^{2}ba^{4} \, = \, vu$ and $s{\tilde{s}} \, = \, u{(ba^{2}ba^{4})}^{n}$, for
some $n \geq 0$. Since $v$ must start and finish with the letter $b$ we either have $u = a^{4}$ and $v = ba^{2}b$,
or $u = a^{4}ba^{2}$ and $v = b$. The latter implies that $s{\tilde{s}}$ is a palindrome of odd length, a contradiction.
The former yields $s{\tilde{s}} \, = \, a^{4}{(ba^{2}ba^{4})}^{d}, \: d \geq 0$, so that \eqref{911} becomes
\begin{equation}
a^{2}ba^{4} \, \fbox{${(ba^{2}ba^{4})}^{d}b$} \, a^{2}ba^{4} \: \equiv \: \pm \, a^{3}ba^{3} \, \fbox{$btb$} \, a^{3}ba^{3}, \quad n \geq  1.
\end{equation}
By successive applications of \eqref{k+l} we obtain
\begin{equation}
{\binom{6}{2}}^{\!2} {(ba^{2}ba^{4})}^{d}b \; \equiv \; \pm \, {\binom{6}{3}}^{\!2} btb \, .
\end{equation}
It follows that
\begin{equation}
{\binom{6}{2}}^{\!2} {\gamma}({(ba^{2}ba^{4})}^{d}b) \; = \; {\binom{6}{3}}^{\!2} {\gamma}(btb) \, ,
\end{equation}
which yields $9 \, {\gamma}({(ba^{2}ba^{4})}^{d}b) \; = \; 16 \, {\gamma}(btb)$. In view of Lemma~\ref{gammaodd} the left hand side
is equal to an odd positive integer, so $16$ can not divide it and we reach a contradiction.

\bigskip

%%%%%%%%%%%%%%%%%%%%%%%%%%%%%%%%%%%%%%%%%%%%%%%%%%%%%%%%%%%%% FINAL TOUCH of the PROOF %%%%%%%%%%%%%%%%%%%%%%%%%%%%%%%%%%%%%%%%%%%%%%%%%%%%%%%%%%%%%%%
%%%%%%%%%%%%%%%%%%%%%%%%%%%%%%%%%%%%%%%%%%%%%%%%%%%%%%%%%%%%%%%%%%%%%%%%%%%%%%%%%%%%%%%%%%%%%%%%%%%%%%%%%%%%%%%%%%%%%%%%%%%%%%%%%%%%%%%%%%%%%%%%%%%%%%%%

\section{Proof of Theorem~\ref{theo:2}.}

Let $w_{1} \, \equiv \pm \, w_{2}$. In view of our analysis in \S 4 and \S 6 we may suppose that
$w_{1} = a^{k}buba^{l}$ and $w_{2} = a^{m}bvba^{n}$, where, without loss of generality, we have $k \leq l$ and $m \leq n$.

\smallskip

$\mathbf{A. \, bub, \, bvb \, \not \equiv \, 0}$. We may assume that $k \leq m$, otherwise we interchange $w_{1}$ and $w_{2}$.
By Corollary~\ref{ewdw} we have $e(w_{1}) = k + l$ and $e(w_{2}) = m + n$, hence $k + l = m + n$.
If $k = m$ this implies that $l = n$, and our result follows from \S 5.

If $k < m$ we let $m = k + r$ and $n = l - r$ for some positive integer $r$.
By Corollary~\ref{ewdw} we have $d(w_{1}) = k$, if $uba^{l} \, \not \equiv \, 0$ and $d(w_{1}) = k + 1$, otherwise.
On the other hand, $d(w_{2}) \geq k + r$. It follows that $r = 1$ and $uba^{l} \, \equiv \, 0$.
If $k + 1 < l - 1$ our result follows from \S 8, whereas if $k + 1 = l - 1$ it follows from \S 9.

\smallskip

$\mathbf{B. \, bub \, \equiv \, 0}$ \textbf{and} $\mathbf{bvb \, \not \equiv \, 0}$.
By Corollary~\ref{ewdw} we have $e(w_{1}) = k + l - 1$ and $e(w_{2}) = m + n$, and therefore $k + l - 1 = m + n$.

(1) If $u = x{\tilde{x}}$ for some $x \in A^{*}$ we must have $k < l$.
Since $x{\tilde{x}}ba^{l} \, \not \equiv \, 0$ we get $d(w_{1}) = k$. Our first claim is that $m = k$.
If $m < n$ we have $vba^{n} \, \not \equiv \, 0$ since ${|v|}_{b}$ is even,
so we get $d(w_{2}) = m$ and thus $m = k$. If $m = n$ Corollary~\ref{ewdw} implies that $d(w_{2}) \geq m$. It follows that $k \geq m$.
On the other hand, we have $2m = k + l - 1 \geq 2k$, since $l - 1 \geq k$, which implies that $m \geq k$. Therefore, in any case, we have $k = m$.
This implies that $n = l - 1$. If $k = l - 1$ our result follows from the analysis in \S 7.
If $k < l - 1$ equations \eqref{k} and \eqref{k+1} respectively yield $x{\tilde{x}}ba^{l} \, \equiv \, \pm \, vba^{l - 1}$ and
$x{\tilde{x}}ba^{l - 1} \, \equiv \, \pm \, vba^{l - 2}$. But these can not hold simultaneously.
Indeed, suppose that $x{\tilde{x}}ba^{l} \, \equiv \, vba^{l - 1}$. By our induction hypothesis, since both words are not even palindromes,
their common length must be odd, hence $l$ must be even. But then the equality $x{\tilde{x}}ba^{l - 1} \, \equiv \, vba^{l - 2}$ is
impossible because the corresponding common length is even. For the minus sign we use similar arguments.

(2) Suppose that $bub = u^{2d + 3}$, for some $d \geq 0$. Since $k + l - 1 = m + n$ we get ${|v|}_{a} = 1$.
Suppose that $k < l$ and $m < n$. If $vba^{n} \, \not \equiv \, 0$ Corollary~\ref{ewdw} yields $d(w_{1}) = k$ and $d(w_{2}) = m$, therefore
we get $k = m$, so that $n = l - 1$. It also implies that $b^{2d + 2}a^{l} \, \equiv \, \pm \, vba^{l - 1}$, which is impossible due to
our induction hypothesis. If $vba^{n} \, \equiv \, 0$ then we have $v = a^{n}bs{\tilde{s}}$, for some $s \in A^{*}$ and since ${|v|}_{a} = 1$
we obtain $n = 1$ contradicting $1 \leq m < n$.

Suppose that $k < l$ and $m = n$. If ${(-1)}^{m + 1}vba^{m} + a^{m}bv \, \not \equiv \, 0$ Corollary~\ref{ewdw} yields $d(w_{2}) = m$. Thus $k = m$
and $l = k + 1$. By \eqref{k+l} and \eqref{k+l-1} then we have
$\displaystyle \binom{2k + 1}{k} \, {\lambda}(ab^{2d + 2}) \, b \, = \, \mp \, \binom{2k}{k} \, {\lambda}(bvb)$.
Since $ab^{2d + 2}$ is a Lyndon word we have ${\gamma}(ab^{2d + 2}) = 1$, so we get $(k + 1) {\gamma}(bvb) = (2k + 1)$ which yields
$k + 1 \, | \, 2k + 1$, a contradiction. If, on the other hand, ${(-1)}^{m + 1}vba^{m} + a^{m}bv \, \equiv \, 0$ we get $d(w_{2}) \geq m + 1$.
Since $d(w_{1}) = k$ we get $k \geq m + 1$. Then we get $2k \geq 2m + 2 = k + l + 1$. It follows that $k \geq l + 1$, which contradicts $k < l$.

Suppose finally that $k = l$. Since $2k - 1 = m + n$ we necessarily get $m < n$. Corollary~\ref{ewdw} then implies that $d(w_{1}) = k$ since
${(-1)}^{k+1}b^{2d + 2}a^{k} + a^{k}b^{2d + 2} \, \not \equiv \, 0$ because the latter is equivalent to $2 \, a^{k}b^{2d + 2} \, \not \equiv \, 0$.
If $vba^{n} \, \not \equiv \, 0$ then $d(w_{2}) = m$. It follows that $m = k$ and hence $n = k - 1$, which contradicts the fact that $m < n$.
On the other hand, if $vba^{n} \, \equiv \, 0$ there exists a word $s \in A^{*}$ such that $vba^{n} = a^{n}bs{\tilde{s}}ba^{n}$.
Since ${|v|}_{a} = 1$ we must have ${|s|}_{a} = 0$ and $n = 1$. But the latter contradicts our assumption $1 \leq m < n$.

\smallskip

$\mathbf{C. \, bub \, \equiv \, 0 \, \equiv \, bvb}$. If $u = x{\tilde{x}}$ and $v = y{\tilde{y}}$ for some $x, y \in A^{*}$ then we
necessarily have $k < l$ and $m < n$. Since $x{\tilde{x}}ba^{l} \, \not \equiv \, 0$ and $y{\tilde{y}}ba^{n} \, \not \equiv \, 0$ Corollary~\ref{ewdw}
implies that $d(w_{1}) = k$, $d(w_{2}) = m$ and $e(w_{1}) = k + l - 1$, $e(w_{2}) = m + n - 1$. Thus $k = m$ and $l = n$.
Thus we are dealing with the equation $a^{k}bx{\tilde{x}}ba^{l} \, \equiv \, \pm \, a^{k}by{\tilde{y}}ba^{l}$ which has already been
considered in \S 5.

Finally, if $bub = bvb = b^{2d + 3}$, for some $d \geq 0$ then $k + l = m + n$. By \eqref{k+l-1} we also get
$\displaystyle \binom{k + l}{k} \, ab^{2d + 2} \, \equiv \, \pm \, \binom{m + n}{m} \, ab^{2d + 2}$. The minus case is immediately dispatched.
For the plus case we either get $m = k$ or $m = l$ which respectively implies either that $w_{1} = w_{2}$ or $w_{1} = \widetilde{w_{2}}$, as
required.

\bigskip
\bigskip

%%%%%%%%%%%%%%%%%%%%%%%%%%%%%%%%%%%% REFERENCES %%%%%%%%%%%%%%%%%%%%%%%%%%%%%%%%%%%%%%%%%%%%%%%%%%%%%%%%%%%%%%%%%%%%%%%%%%%%%%%%%%%%%%%%%%%%%%%%%%%%%%%% %%%%%%%%%%%%%%%%%%%%%%%%%%%%%%%%%%%%%%%%%%%%%%%%%%%%%%%%%%%%%%%%%%%%%%%%%%%%%%%%%%%%%%%%%%%%%%%%%%%%%%%%%%%%%%%%%%%%%%%%%%%%%%%%%%%%%%%%%%%%%%%%%%%%%%%%

\end{document}